\def\Xint#1{\mathchoice
	{\XXint\displaystyle\textstyle{#1}}%
	{\XXint\textstyle\scriptstyle{#1}}%
	{\XXint\scriptstyle\scriptscriptstyle{#1}}%
	{\XXint\scriptscriptstyle\scriptscriptstyle{#1}}%
	\!\int}
\def\XXint#1#2#3{{\setbox0=\hbox{$#1{#2#3}{\int}$ }
		\vcenter{\hbox{$#2#3$ }}\kern-.6\wd0}}
\def\dashint{\Xint-}
\theoremstyle{thmstyleone}%
\newtheorem{theorem}{Theorem}
\theoremstyle{thmstyletwo}%
\newtheorem{remark}{Remark}%
\newtheorem{lemma}{Lemma}%
\newtheorem{corollary}{Corollary}%
\newtheorem{strategy}{Strategy}%
\newtheorem{problem}{Problem}
\newtheorem{assumption}{Assumption}
\theoremstyle{thmstylethree}%
\newtheorem{definition}{Definition}%
\numberwithin{equation}{section}
\numberwithin{theorem}{section} 
\numberwithin{figure}{section} 
\numberwithin{table}{section} 
\numberwithin{definition}{section} 
\numberwithin{problem}{section} 
\numberwithin{lemma}{section}
\numberwithin{corollary}{section}
\numberwithin{remark}{section}
\numberwithin{assumption}{section}
\numberwithin{strategy}{section}
\begin{document}

\title[Exponentially-fitted FE for $\bm{H}({\rm curl})$ and $\bm{H}({\rm div})$]{Exponentially-fitted Finite Elements for $\bm{H}({\rm curl})$ and $\bm{H}({\rm div})$ Convection-Diffusion Problems }


\author[1]{\fnm{Jindong} \sur{Wang}}\email{jdwang@pku.edu.cn}

\author*[1]{\fnm{Shuonan} \sur{Wu}}\email{snwu@math.pku.edu.cn}

\affil[1]{\orgdiv{School of Mathematical Sciences}, \orgname{Peking University}, \orgaddress{\city{Beijing}, \postcode{100871}, \country{P. R. China}}}


\abstract{\unboldmath This paper presents a novel approach to the construction of the lowest order $\bm{H}(\mathrm{curl})$ and $\bm{H}(\mathrm{div})$ exponentially-fitted finite element spaces {${\mathcal{S}_{1^-}^{k}}~(k=1,2)$} on 3D simplicial mesh for corresponding convection-diffusion problems. It is noteworthy that this method not only facilitates the construction of the functions themselves but also provides corresponding discrete fluxes simultaneously. Utilizing this approach, we successfully establish a discrete convection-diffusion complex and employ a specialized weighted interpolation to establish a bridge between the continuous complex and the discrete complex, resulting in a coherent framework. Furthermore, we demonstrate the commutativity of the framework when the convection field is locally constant, along with the exactness of the discrete convection-diffusion complex. Consequently, these types of spaces can be directly employed to devise the corresponding discrete scheme through a Petrov-Galerkin method. 
}

\keywords{\unboldmath exponential fitting, $\bm{H}({\rm curl})$ and $\bm{H}({\rm div})$ convection-diffusion problems, Petrov-Galerkin, finite element methods}


\pacs[MSC Classification]{65N30, 65N12, 65N15}

\maketitle

\section{Introduction} \label{sec:intro}
Given a bounded region $\Omega$ in $\mathbb{R}^3$, this paper explores the utilization of finite element methods to address convection-diffusion problems in $\bm{H}(\text{curl})$ and $\bm{H}(\text{div})$. To simplify the analysis, we concentrate on model problems with homogeneous boundary conditions, which are expressed in the following forms.
\begin{itemize}
\item $\bm{H}({\rm curl})$ convection-diffusion problem:
\begin{equation} \label{eq:curl-cd}
 \left\{
\begin{array}{ll}
 \mathcal{L}^1 \bm{u} + \gamma \bm{u} := \nabla \times(\varepsilon \nabla \times \bm{u} + \bm{\beta} \times \bm{u}) + \gamma \bm{u} = \bm{f}
&\quad \text{in }
\Omega,\\ 
\bm{n} \times \bm{u} = \bm{0} & \quad \text{on } \partial \Omega. \\ 
\end{array}
\right.
\end{equation} 
\item $\bm{H}({\rm div})$ convection-diffusion problem:
\begin{equation} \label{eq:div-cd}
\left\{
\begin{array}{ll}
\mathcal{L}^2\bm{u} + \gamma \bm{u} := -\nabla (\varepsilon \nabla \cdot \bm{u} + \bm{\beta} \cdot \bm{u}) + \gamma \bm{u} = \bm{f}
&\quad \text{in }
\Omega,\\ 
\bm{u} \cdot \bm{n} = 0 & \quad \text{on }\partial \Omega.
\end{array}
\right.
\end{equation} 
\end{itemize}
Here, the velocity field is denoted by $\bm{\beta}$,  while the constant $0 < \varepsilon \leq 1$ denotes the diffusion coefficient and $\gamma \geq \gamma_0 > 0$ stands for the reaction coefficient. Additionally, $\bm{n}$ denotes the unit vector normal to the boundary $\partial \Omega$. Notably, convection and diffusion often interact, with the strength of this interaction being influenced by the ratio between the diffusion coefficient $\varepsilon$ and the magnitude of the velocity field $\bm{\beta}$. This interaction, referred to the magnetic Reynolds number in the context of magnetohydrodynamics, gives rise to various models that find crucial applications, especially within the field of magnetohydrodynamics.

The terminology of convection-diffusion is widely recognized and extensively studied for scalar problems, as they play an essential role in mathematical modeling and simulation across various fields, including fluid mechanics, astrophysics, groundwater flow, meteorology, semiconductors, and reactive flows. The scalar convection-diffusion in the conservative form can be written as 
\begin{equation} \label{eq:grad-cd}
\left\{
\begin{array}{ll}
\mathcal{L}^0u + \gamma u := -\nabla\cdot(\varepsilon\nabla u+\bm{\beta}u)+\gamma u = f
&\quad \text{in }
\Omega,\\ 
u = 0 & \quad \text{on }\partial \Omega.
\end{array}
\right.
\end{equation} 

The primary numerical challenge in dealing with convection-diffusion problems \eqref{eq:curl-cd} -- \eqref{eq:grad-cd} lies in the wide range of diffusion coefficients. Specifically, as the diffusion coefficient approaches zero, the occurrence of boundary layers can lead to spurious oscillations in standard finite element discretizations. To address this issue on quasi-uniform meshes, the design of stabilized finite element methods for convection-diffusion problems primarily revolves around two distinct approaches.  

The first type is the {\it upwind methods}, which introduce stabilization terms based on the information of the convection in the variational formulation. 
Prominent techniques within this group comprise residual-based methods such as Streamline-Upwind Petrov Galerkin (SUPG) \cite{brooks1982streamline, franca1992stabilized,hughes1979multidimentional} and residual-free bubble methods \cite{brezzi1994choosing, brezzi1998applications, brezzi1998further,brezzi1999priori,brezzi2000residual,brezzi2005choice}. Symmetric stabilization strategies, exemplified by local projection stabilization \cite{ganesan2010stabilization,matthies2007unified} and continuous interior penalty/edge stabilization \cite{burman2004edge, burman2005unified,burman2006continuous}, also fall under this umbrella, alongside the well-regarded discontinuous Galerkin (DG) methods \cite{brezzi2004discontinuous,cockburn1998local,houston2002discontinuous,ayuso2009discontinuous,egger2010hybrid}.
Notably, the efficacy of upwind methodologies has extended to address more intricate challenges posed by vector convection-diffusion problems. Specifically, for the magnetic convection problem, which involves $\bm{H}({\rm curl})$ convection-diffusion, Heumann and Hiptmair delved into upwind DG methods \cite{heumann2013stabilized}. Expanding on this foundation, other notable contributions include the presentation of discontinuous finite element methods and hybridized discontinuous finite element methods for magnetic convection-diffusion problems in \cite{wang2022discontinuous, wang2023hybridizable}. These advancements inherently belong to the upwind paradigm, showcasing the stabilization mechanisms tailored to $\bm{H}({\rm curl})$ formulation exclusively.

Another class of methods is based on the idea of {\it exponential fitting}, considering that the boundary layer can often be characterized by an exponential function. Therefore, the {\it operator-fitting approach} can be employed, incorporating certain exponential functions in the variational formulation or the construction of the stiffness matrix. For the scalar convection-diffusion problem \eqref{eq:grad-cd}, Brezzi, Marini and Pietra \cite{brezzi1989two} proposed an exponential fitting method based on the exponential transformation. Xu and Zikatanov introduced the Edge-Averaged Finite Element (EAFE) method \cite{xu1999monotone}, based on the concept of flux-patch constant approximation. This method ensures monotonicity under weaker mesh conditions (i.e., the discrete matrix is an $M$-matrix). The extension of the EAFE scheme to various anisotropic diffusion coefficients can be found in \cite{lazarov2012exponential}, while its extension to space-time discretization is discussed in \cite{bank2017arbitrary}. We refer to \cite{bank1990some, angermann1995error, bank1998finite} for other operator-fitting schemes.

In contrast to the operator-fitting approach, a more direct strategy involves crafting a specialized discrete finite element space that incorporates {\it specific exponential functions}. This space is essentially a collection of functions that approximate the convection-diffusion differential operator approaching zero, known as $\mathcal{L}$-spline. To explore the construction and analysis of exponential fitting spaces in one-dimensional settings, we refer to \cite{stynes1986finite, stynes1991analysis}. When dealing with higher-dimensional structured meshes, the pioneering work by \cite{o1991globally} outlines the construction and analysis of finite element spaces embracing an exponential feature. Subsequently, \cite{roos1996novel} introduced a non-matching $\mathcal{L}$-spline finite element space tailored for general convection velocity. Expanding the analytical scope, \cite{dorfler1999uniform,dorfler1999uniform2} established a more comprehensive framework.
Compared to structured meshes, constructing $\mathcal{L}$-spline on unstructured simplical meshes is relatively challenging. 
\cite{sacco1998finite, sacco1999nonconforming} provided special finite element spaces for two-dimensional triangular unstructured meshes. This method, based on the assumption of linear flux, uniquely determines the corresponding discrete space. Further, Wang \cite{wang1997novel} proposed the idea of pointwise low-dimensional confinement approximation, confining the problem to one dimension. This resulted in an algebraic system that uniquely defines the basis functions, leading to the construction of an exponentially-fitted finite element space on simplical meshes \cite{wang2002analysis, angermann2005multidimensional}, which exhibits good stabilization effects. Note that the construction of $\mathcal{L}$-spline mentioned above pertains exclusively to scalar convection-diffusion problem \eqref{eq:grad-cd}.

Different types of convection-diffusion problems share a unified mathematical form, as evident in equations \eqref{eq:grad-cd}, \eqref{eq:curl-cd}, and \eqref{eq:div-cd}, representing the proxy in $\mathbb{R}^3$ of convection-diffusion problems in terms of differential $k$-forms ($k=0,1,2$). In fact, the study of Finite Element Exterior Calculus (FEEC) has been a prominent research focus in recent years \cite{arnold2006finite}, providing a unified mathematical framework and discretization approach for diffusion problems. While investigations into the discretization of convection terms under the FEEC framework are still limited, notable contributions have been made by scholars such as Hiptmair and his colleagues. They explored a range of time-stepping methods, including Eulerian and semi-Lagrangian techniques, to address generalized convection-diffusion problems involving convection terms based on the Lie derivative \cite{heumann2011eulerian, heumann2012fully, heumann2013convergence}. Inspired by the EAFE method \cite{xu1999monotone}, a distinct approach emerged in the form of the simplex-averaged finite element (SAFE) method for both scalar and vector scenarios of advection-diffusion problems \cite{wu2020simplex}. The SAFE method falls within the category of exponential operator-fitting techniques, employing conventional FEM spaces and demonstrating an inherent upwind effect as the diffusion coefficient approaches zero.

\subsection{Main contribution}
Our aim is to provide a unified construction of exponentially-fitted finite element spaces (i.e., $\mathcal{L}^k$-spline for $k=0,1,2$) for the convection-diffusion problem of $H(d)$ on three-dimensional simplicial meshes. In this context, the operator $d$ can be interpreted as the grad ($d^0=\nabla$), curl ($d^1=\nabla \times$), or div ($d^2 = \nabla \cdot$) operator, in terms of proxy in $\mathbb{R}^3$. Introducing $\bm{\theta} = \bm{\beta}/\varepsilon$, we consider the fluxes associated with the convection-diffusion problems \eqref{eq:grad-cd}, \eqref{eq:curl-cd}, and \eqref{eq:div-cd} as follows:
\begin{equation} \label{eq:flux}
\begin{aligned}
\bm{J}_{\bm{\theta}}^0 u &:= \varepsilon \nabla u + \bm{\beta} u = \varepsilon (\nabla u + \bm{\theta} \cdot \nabla u), \\
\bm{J}_{\bm{\theta}}^1 \bm{u} &:= \varepsilon \nabla \times \bm{u} + \bm{\beta} \times \bm{u} 
= \varepsilon (\nabla \times \bm{u} + \bm{\theta} \times \bm{u}), \\
J_{\bm{\theta}}^2\bm{u} &:= \varepsilon \nabla \cdot \bm{u} + \bm{\beta} \cdot \bm{u} 
= \varepsilon (\nabla \cdot \bm{u} + \bm{\theta} \cdot \bm{u} ). \\
\end{aligned}
\end{equation}
It is worth noting that while examining a specific index $k$ in the fluxes outlined above, vector functions are represented using bold typeface. However, for general abstract indices, bold formatting is not employed. The starting point of this article is the exponentially-fitted identity originally proposed in SAFE \cite{wu2020simplex}: When $\bm{\theta}$ is a constant field,
\begin{equation} \label{eq:identity}
J_{\bm{\theta}}^k u=\varepsilon E_{\bm{\theta}}^{-1} d^k E_{\bm{\theta}} u, \quad k=0,1,2,
\end{equation}
where $E_{\bm{\theta}}(\bm{x}) :=\exp(\bm{\theta}\cdot\bm{x})$. Let $\mathbb{E}_{\bm{\theta}}(\Omega) := \{c\exp(\bm{\theta}\cdot\bm{x}): c\in \mathbb{R}, \bm{x} \in \Omega\}$. On the continuous level, identity \eqref{eq:identity} corresponds to the following exact sequence:
\begin{equation} \label{eq:sequence-cont}
\mathbb{E}_{-\bm{\theta}}(\Omega) \hookrightarrow C^\infty(\Omega) \xrightarrow{\bm{J}_{\bm{\theta}}^0} C^\infty(\Omega;\mathbb{R}^3) \xrightarrow{\bm{J}_{\bm{\theta}}^1} C^\infty(\Omega;\mathbb{R}^3) \xrightarrow{J_{\bm{\theta}}^2} C^\infty(\Omega) \rightarrow 0.
\end{equation}

Inspired by Wang's work on scalar problem \cite{wang1997novel}, our construction relies on two key strategies: (i) By confining the convection-diffusion problem of $H(d^k)$ to the interior $(k+1)$-dimensional sub-simplices, and based on the intrinsic identity \eqref{eq:identity}, obtaining and approximating the equations satisfied by $\mathcal{L}^k$-spline; (ii) For the construction of vector exponentially-fitted spaces ($k=1,2$), it also depends on a strategy involving a constant approximation of the $k$-dimensional sub-simplex exponential integral.

Based on the positivity and geometric properties of Bernoulli functions associated with the discrete spaces, we subsequently establish the well-posedness of the construction and prove properties of the resulting exponentially-fitted finite element spaces, including consistency, and unisolvent degrees of freedom (DOFs). This construction provides the lowest-order approximation of the $\mathcal{L}$-spline space, with DOFs consistent with those of classical finite element spaces (which are denoted by $\mathcal{P}_1^-\Lambda^k$ in the FEEC \cite{arnold2006finite}), hence denoted as $\mathcal{S}_{1^-}^k$. It is worth noting that the construction of the space is carried out in a pointwise sense, thus ensuring strict conformity of the discrete space when velocity fields satisfying certain smoothness. Moreover, for a given set of DOFs associated with a triangulation $\mathcal{T}_h$, this construction {\it simultaneously provides the function $u_h$ and the discrete flux $J_{\bm{\theta},h}^ku_h$}, satisfying the following when $\bm{\theta}$ is a constant field:
\begin{equation} \label{eq:sequence-dis}
	\mathbb{E}_{-\bm{\theta}}(\Omega) \hookrightarrow \mathcal{S}_{1^-}^0(\mathcal{T}_h) \xrightarrow{\bm{J}_{\bm{\theta},h}^0} \mathcal{S}_{1^-}^1(\mathcal{T}_h) \xrightarrow{\bm{J}_{\bm{\theta},h}^1} \mathcal{S}_{1^-}^2(\mathcal{T}_h) \xrightarrow{J_{\bm{\theta},h}^2} \mathcal{S}_{1^-}^3(\mathcal{T}_h) \rightarrow 0,
\end{equation}
which serves as a discrete analogue of \eqref{eq:sequence-cont}.

We wish to emphasize that this construction represents a natural finite element method for convection-diffusion problems. Taking the $\bm{H}(\mathrm{curl})$ problem \eqref{eq:curl-cd} as an example, its variational form reads as follows: Find $\bm{u} \in \bm{H}_0(\mathrm{curl}; \Omega)$ such that 
$$ 
(\underbrace{\varepsilon\nabla \times \bm{u} + \bm{\beta} \times \bm{u}}_{\bm{J}_{\bm{\theta}}^1 \bm{u}}, \underbrace{\nabla \times \bm{v}}_{d^1 \bm{v}} ) + (\gamma \bm{u}, \bm{v}) = (\bm{f}, \bm{v}) \quad \forall \bm{v} \in \bm{H}_0(\mathrm{curl}; \Omega).
$$ 
When discretizing the space for $\bm{u}$, what we truly require is a mapping, at every point (or every quadrature point), from the degrees of freedom to the value and flux, for respectively discretizing the lower-order and convection-diffusion terms. Remarkably, our construction precisely fulfills this fundamental requirement. Similarly, for the finite element space containing the test function $\bm{v}_h$, this mapping corresponds to the transformation of degrees of freedom into the values of $\bm{v}_h$ and $\nabla \times \bm{v}_h$. It is evident that, in the context of traditional polynomial finite element discretization, the value of $\nabla \times \bm{v}_h$ within this mapping is nothing but the curl of the function $\bm{v}_h$. In essence, the natural discrete space for $\bm{v}_h$ corresponds to the lowest-order N{\'e}d{\'e}lec elements. From this perspective, a natural discretization scheme takes the form of the Petrov-Galerkin: the discretization of $\bm{u}$ employs the exponentially-fitted space, while for $\bm{v}$, the traditional finite element space is utilized.

On the other hand, when the convection field degenerates to zero, the starting point of our construction (i.e., the two strategies) is naturally consistent with the standard edge element, rendering the finite element spaces compatible with traditional polynomial spaces. From this perspective, this construction serves as a natural extension for problems involving convection terms, and when convection dominates, this space encompasses certain exponential functions.

Finally, we introduce a specialized class of weighted interpolation operators, utilizing them as a bridge to connect the continuous complex with the discrete complex. We establish the commutativity of the corresponding diagrams when $\bm{\theta}$ is a locally constant field. These findings play a pivotal role in the analysis of the numerical scheme.

The rest  of the paper is structured as follows. In Section \ref{sec:pre}, we present preliminary results, including geometric notation, the Bernoulli functions, and a concise review of the $H(\mathrm{grad})$ exponentially-fitted finite element (FE) space. The construction and pertinent properties of the $\bm{H}(\mathrm{curl})$ exponentially-fitted FE space are discussed in Section \ref{sec:curl}. Similarly, Section \ref{sec:div} delves into the construction and associated properties of the $\bm{H}(\mathrm{div})$ exponentially-fitted FE space. In Section \ref{sec:diagram}, we introduce crucial operators for establishing a commutative diagram under the condition of a locally constant vector field $\bm{\beta}$. The utilization of the FE spaces in convection-diffusion problems is demonstrated in Section \ref{sec:application}. To empirically showcase the accuracy and inherent stability benefits of the proposed exponentially-fitted FE spaces, we provide a series of numerical experiments in Section \ref{sec:numerical}.

\section{Preliminary results} \label{sec:pre}
In this section, we introduce pertinent geometric notation and delve into the Bernoulli functions, accompanied by their associated properties. Furthermore, we will provide an overview of Wang's methodology \cite{wang1997novel} for the construction of the exponentially-fitted $H(\mathrm{grad})$ finite element space.

Given $p\in[1,\infty]$ and an integer $m\ge 0$, we use the usual notation $W^{m,p}(\Omega)$, $\|\cdot\|_{m,p,\Omega}$ and $|\cdot|_{m,p,\Omega}$ to denote the usual Sobolev space, norm and semi-norm, respectively. When $p=2$, $H_m(\Omega):=W^{m,p}(\Omega)$ with $|\cdot|_{m,\Omega}:=|\cdot|_{m,2,\Omega}$ and $\|\cdot\|_{m,\Omega}:=\|\cdot\|_{m,2,\Omega}$. 

Let $\mathcal{T}_h$ be a conforming and shape-regular sequence of decompositions of $\Omega$ into tetrahedrons, $h_T$ is the diameter of $T \in \mathcal{T}_h$ and $h:=\max_{T\in\mathcal{T}_h}h_T$. We also define $\mathcal{V}_h$ (resp. $\mathcal{V}_T$) as the set of all vertices belonging to $\mathcal{T}_h$ (resp. $T \in \mathcal{T}_h$). Similarly, $\mathcal{E}_h$ (resp. $\mathcal{E}_T$) represents the collection of all edges within $\mathcal{T}_h$ (resp. $T \in \mathcal{T}_h$), and $\mathcal{F}_h$ (resp. $\mathcal{F}_T$) denotes the set of facets within $\mathcal{T}_h$ (resp. $T \in \mathcal{T}_h$).

Throughout this paper, the notation $A\lesssim B$ is employed to signify that $A$ is bounded above by $CB$, where $C$ represents a constant, independent of mesh size.

\subsection{Geometric notation on tetrahedron}\label{subsec:notation}
Consider a tetrahedron $T \in \mathcal{T}_h$ with vertices $\bm{x}_1, \bm{x}_2, \bm{x}_3$, and $\bm{x}_4$. The edges $E_{ij}$ $(1 \leq i < j \leq 4)$ are formed by connecting the vertices $\bm{x}_i$ and $\bm{x}_j$, while the facets $F_i~(1 \leq i \leq 4)$ are positioned opposite the vertex $\bm{x}_i$. The unit tangential vector of edge $E_{ij}$ is defined as $\bm{t}_{ij} := \frac{\bm{E}_{ij}}{|\bm{E}_{ij}|}$ with $\bm{E}_{ij} := \bm{x}_j-\bm{x}_i$. Furthermore, the unit outward normal vector of facet $F_i$ is denoted by $\bm{n}_i$.

For any point $\bm{x}\in \bar{T}$, we denote by ${l}_i ~(1\leq i \leq 4)$ the sub-edge connecting $\bm{x}$ to $\bm{x}_i$, whose direction is determined by $\bm{\tau}_i: = \frac{\bm{l}_i}{|\bm{l}_i|}$ with $\bm{l}_i := \bm{x}_i - \bm{x}$ (Figure \ref{fig:sub-edge}). We also denote by $f_{ij}~(1\leq i < j \leq 4)$ the sub-facet formed by edge $E_{ij}$ and point $\bm{x}$, and by $\bm{\nu}_{ij}=\frac{\bm{\tau}_i\times \bm{\tau}_j}{|\bm{\tau}_i \times \bm{\tau}_j|}$ the unit normal vector of $f_{ij}$ (Figure \ref{fig:sub-facet}). Furthermore, we use $T_i$ to denote the sub-tetrahedron composed of point $\bm{x}$ and facet $F_i$ (Figure \ref{fig:sub-tetrahedron}). 

\begin{figure}[!htbp]
	\centering
	\subfloat[sub-edges]{
	\includegraphics[width=.3\textwidth]{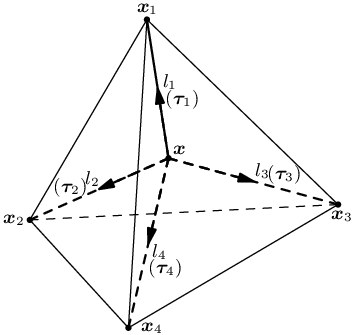} \label{fig:sub-edge}
	}
	\subfloat[sub-facets]{
	\includegraphics[width=.3\textwidth]{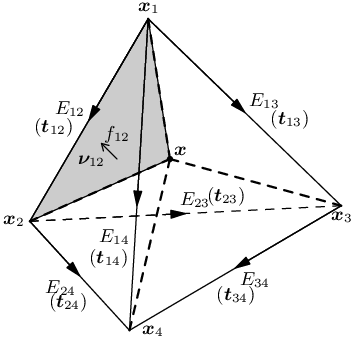} \label{fig:sub-facet}
	}
	\subfloat[sub-tetrahedrons]{
	\includegraphics[width=.3\textwidth]{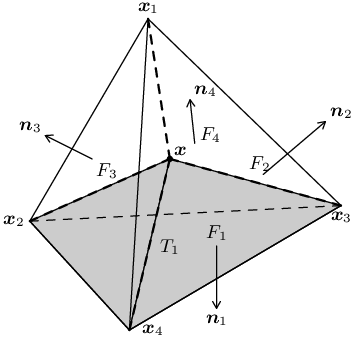} \label{fig:sub-tetrahedron}
	}
	\caption{Geometric notation associated with the tetrahedron $T$.}
	\label{fig:geo-notation}
\end{figure}

We observe that $\bm{l}_i~(1\leq i \leq 4)$ constitutes a linear vector function of $\bm{x}$. It maintains a direct yet frequently-employed correlation with the barycentric coordinates $\lambda_i(\bm{x})$, as demonstrated below:
\begin{equation} \label{eq:identityli}
	\sum_{i=1}^{4} \lambda_i\bm{l}_i = \sum_{i=1}^{4}\lambda_i\left(\bm{x}_i-\bm{x}\right)=\sum_{i=1}^{4}\lambda_i\bm{x}_i - \sum_{i=1}^4 \lambda_i\bm{x}=\bm{x}-\bm{x}=\bm{0}.
\end{equation}

\subsection{Bernoulli functions}
Considering the convection field $\bm{\beta}(\bm{x})$, we introduce its vector product with $\bm{l}_i ~(1\leq i\leq 4)$ and $\bm{E}_{ij}~(1\leq i < j \leq 4)$. These products define the domain for the subsequent definition of Bernoulli functions,
	\begin{equation}\label{eq:sigma}
		\sigma_i(\bm{x}) := {\bm{\beta}}(\bm{x})\cdot \bm{l}_i(\bm{x}), \quad \sigma_{ij}(\bm{x}):= \bm{\beta}(\bm{x}) \cdot \bm{E}_{ij}.
	\end{equation}
A direct calculation shows that
\begin{equation} \label{eq:sigma-diff}
\sigma_{ij}(\bm{x}) = \bm{\beta}(\bm{x})\cdot (\bm{x}_j-\bm{x}_i) = \bm{\beta}(\bm{x})\cdot(\bm{l}_j-\bm{l}_i) = \sigma_j(\bm{x}) - \sigma_i(\bm{x}).
\end{equation}

Secondly, the construction of spline basis is based on the geometric information of $T$ and the following coefficients:
$$ 
\mbox{diffusion coefficient} \times
\frac{\mbox{exponential average on sub-simplex of dimension }k}
{\mbox{exponential average on sub-simplex of dimension }k+1}.
$$ 
Thanks to the affine mapping to reference element, this coefficient can be given by the following Bernoulli functions:
\begin{definition}
	[1D-Bernoulli function, $k=0$]$B_1^\varepsilon:\mathbb{R}\rightarrow \mathbb{R}^+$ denotes the 1D-Bernoulli function defined by
	\begin{equation}\label{eq:bernoulli1d}
		B_1^\varepsilon(\sigma_1) := \varepsilon\frac{1}{\int_0^1 \exp(\sigma_1 x_1/\varepsilon) \mathrm{d}x_1}.
	\end{equation}
\end{definition}

\begin{definition}
	[2D-Bernoulli function, $k=1$]$B_2^\varepsilon:\mathbb{R}^2 \rightarrow \mathbb{R}^+$ denotes the 2D-Bernoulli function defined by
	\begin{equation}\label{eq:bernoulli2d}
B_2^\varepsilon(\sigma_1,\sigma_2) := \varepsilon \frac{\int_0^1 \exp(\sigma_1
	{x}_1/\varepsilon)\,\mathrm{d}{x}_1}
{2 \int_0^1 \int_0^{1-{x}_2}
	\exp((\sigma_1{x}_1 + \sigma_2{x}_2)/\varepsilon)\,\mathrm{d}{x}_1 
	\mathrm{d}{x}_2}.
	\end{equation}
\end{definition}

\begin{definition}
	[3D-Bernoulli function, $k=2$]$B_3^\varepsilon:\mathbb{R}^3 \rightarrow \mathbb{R}^+$ denotes the 3D-Bernoulli function defined by
	\begin{equation}\label{eq:bernoulli3d}
B_3^\varepsilon(\sigma_1,\sigma_2,\sigma_3) := \varepsilon \frac{2 \int_0^1 \int_0^{1-{x}_2}
	\exp((\sigma_1{x}_1 +
	\sigma_2{x}_2)/\varepsilon)\,\mathrm{d}{x}_1\mathrm{d}{x}_2}
{6\int_0^1\int_0^{1-{x}_3} \int_0^{1-{x}_2 - {x}_3} 
	\exp((\sigma_1{x}_1 + \sigma_2{x}_2 + \sigma_3{x}_3)/\varepsilon)\,\mathrm{d}{x}_1
	\mathrm{d}{x}_2\mathrm{d}{x}_3}.
	\end{equation}
\end{definition}
The aforementioned Bernoulli functions $B_k^\varepsilon$ are well-defined and continuous on $\mathbb{R}^k$. It is important to note that these Bernoulli functions remain viable as $\varepsilon\rightarrow 0^+$. For further insight into the limiting behavior, we refer to \cite[(A.5)-(A.7)]{wu2020simplex}.

\subsection{Exponentially-fitted finite element space in $H(\mathrm{grad})$}
We begin by revisiting the construction of the exponentially-fitted FE space, as introduced by Wang \cite{wang1997novel}. This construction is specifically tailored to address the $H(\mathrm{grad})$ convection-diffusion problem \eqref{eq:grad-cd}. The subsequent derivation relies on the assumption that $\bm{\theta}$ is a constant, allowing us to utilize \eqref{eq:identity}:
$$
\bm{J}_{\bm{\theta}}^0u=\varepsilon( \nabla  {{u}} + {\bm{\theta}}  {{u}} ) = \varepsilon E_{\bm{\theta}}^{-1}\nabla(E_{\bm{\theta}} {{u}}).
$$ 
By employing equation \eqref{eq:grad-cd}, we have $\mathcal{L}^0 u = - \nabla \cdot (\bm{J}_{\bm{\theta}}^0u)$. 

Guided by the $\mathcal{L}^0$-spline philosophy, which builds upon the fundamental principles outlined in \cite{wang1997novel}, we aim to {\it confine} the action of the operator $\mathcal{L}^0$ to the one-dimensional sub-edges $l_i~(1 \leq i \leq 4)$. This confinement entails that when the differential operator is applied, it exclusively accounts for the influence along the direction of $\bm{\tau}_i$. Therefore, we obtain the edge $H({\mathrm{grad}})$ convection diffusion operator:
\begin{equation}
	\mathcal{L}_E^0u = -\frac{\partial }{\partial \bm{\tau}}\left((\bm{J}_{\bm{\theta}}^0u\right)\cdot\bm{\tau})= -\frac{\partial }{\partial \bm{\tau}} \left(\varepsilon E_{\bm{\theta}}^{-1}\frac{\partial}{\partial\bm{\tau}}(E_{\bm{\theta}}u)\right),
\end{equation}
where $\bm{\tau}$ is the unit tangential of edge $E$. 

Given an element $T$, we intend to construct the shape function space as defined below:
\begin{equation} \label{eq:curl-shape}
	\mathcal{S}_{1^-}^0(T) := \mathrm{span}\{ \bm{\phi}^V_{i}:~ 1\leq i \leq 4\},
\end{equation}
where $\bm{\phi}_{i}^V$ is the basis function associated with the vertex $\bm{x}_{i}$. 

For a given $\bm{x} \in \bar{T}$, we adopt a geometric notation (as depicted in Figure \ref{fig:sub-edge}). To provide further clarity, we will derive algebraic equations involving the values of the basis function ${\phi}_1^V(\bm{x})$ and its corresponding flux $\bm{j}^V_1(\bm{x})$. A systematic approach involves considering their extensions onto one-dimensional sub-edges $l_i$ ($1\le i\le 4$), denoted as $\widetilde{\phi}_1^V(\bm{y})$ and $\widetilde{\bm{j}}^V_1(\bm{y})$ respectively, while satisfying the confinement on the sub-edges. As a result, when we enforce this confinement on $l_i~(1\leq i \leq 4)$, the application of the $\mathcal{L}^0$-spline leads to the following:
\begin{equation} \label{eq:confinement}
\left\{\begin{aligned}
	-\frac{\partial }{\partial \bm{\tau}_i}\left( \bm{\tau}_i \cdot \widetilde{\bm{j}}_1^V(\bm{y}) \right) &= 
	 -\frac{\partial}{\partial \bm{\tau}_i} \left( 
	 \varepsilon E^{-1}_{\bm{\theta}}(\bm{y})\frac{\partial}{\partial \bm{\tau}_i}
	 (E_{\bm{\theta}}(\bm{y}) \widetilde{\phi}_1^V(\bm{y})) \right)=0 \quad \text{ on } l_i,\\
	\widetilde{\phi}_1^V(\bm{x}_i) &= \delta_{1i},
\end{aligned}\right.
\end{equation}
for $1\leq i \leq 4$.
Whence the product $\bm{\tau}_i \cdot \widetilde{\bm{j}}_1^V(\bm{y})$ remains constant along $l_i$, implying that
\begin{equation} \label{eq:grad-const}
\bm{\tau}_i\cdot {\bm{j}}_1^V(\bm{x}) =\varepsilon E^{-1}_{\bm{\theta}}(\bm{y})\frac{\partial}{\partial \bm{\tau}_i}(E_{\bm{\theta}}(\bm{y})\widetilde{\phi}_1^V(\bm{y})), \quad \forall \bm{y} \in l_i.
\end{equation}
Multiplying the aforementioned equation by $E_{\bm{\theta}}(\bm{y})$ and integrating it over $l_i$, we have
$$
\bm{l}_i\cdot \bm{j}^V_1(\bm{x}) \dashint_{l_i}E_{\bm{\theta}} = \varepsilon\int_{l_i} \frac{\partial}{\partial \bm{\tau}_i}(E_{\bm{\theta}}\widetilde{\phi}_1^V)=\varepsilon E_{\bm{\theta}}(\bm{x}_i)\delta_{1i} -\varepsilon E_{\bm{\theta}}(\bm{x})\phi_1^V(\bm{x}) .
$$
Now, utilizing the geometric interpretation of the 1D-Bernoulli function as expressed in equation \eqref{eq:bernoulli1d}, we arrive at the algebraic equation:
\begin{equation}\label{eq:alggrad}
\bm{l}_i\cdot \bm{j}_1^V(\bm{x}) +  B_1^\varepsilon(\sigma_i)\phi_{1}^V(\bm{x}) =   B_1^\varepsilon(-\sigma_i)\delta_{1i}  \quad  1\leq i \leq 4,
\end{equation}
where $\sigma_i = \bm{\beta} \cdot \bm{l}_i$ is defined in equation \eqref{eq:sigma}. These equations can be organized into a linear system to uniquely determine the values of $\bm{j}_1^V(\bm{x})$ and $\phi_1^V(\bm{x})$.

 \begin{problem}[$\mathcal{L}^0$-spline] \label{problem:grad} 
 Find $\phi_1^V(\bm{x})$ and $\bm{j}_1^V(\bm{x})$ such that for all $\bm{x}\in \bar{T}$ 
\begin{equation}\label{eq:sysgrad}
D^V(\bm{x})\left(\begin{matrix}
	\bm{j}_1^V(\bm{x})\\\phi_1^V(\bm{x})
\end{matrix}\right)= B_1^\varepsilon(-\sigma_1)\bm{e}_1^V,
\end{equation}
where $\bm{e}_1^V=(1,0,0,0)^\top$ and $D^V(\bm{x})$ is a $4\times 4$ matrix defined by
\begin{equation}
	D^V(\bm{x})=\left(\begin{matrix}
		\bm{l}_1^\top &  B_1^\varepsilon(\sigma_1)\\
		\bm{l}_2^\top &  B_1^\varepsilon(\sigma_2)\\
		\bm{l}_3^\top &  B_1^\varepsilon(\sigma_3)\\
		\bm{l}_4^\top &  B_1^\varepsilon(\sigma_4)
	\end{matrix}\right).
\end{equation}
\end{problem}

The well-posedness of Problem \ref{problem:grad} has been established in \cite{wang1997novel}, where the determint of $D^V(\bm{x})$ is shown to be 
$$
\det(D^V(\bm{x})) = 6|T|\sum_{i=1}^{4}\lambda_i B_1^\varepsilon (\sigma_i),
$$ 
which is positive due to the positivity of 1D-Bernoulli function. The function space $\mathcal{S}_{1^-}^0(T)$ shares identical degrees of freedom (DOFs) with the standard $\mathcal{P}_1$-Lagrange element (it is denoted as $\mathcal{P}_{1^-}^0$ following the nomenclature used in the article). It is noteworthy that the construction assumes $\bm{\theta}$ (or $\bm{\beta}$) to be a constant vector field. However, the final formulation of Problem \ref{problem:grad} provides values for the basis functions and their corresponding fluxes solely at the point $\bm{x}$. Consequently, while solving for the point $\bm{x}$, it is only necessary to employ $\bm{\beta}(\bm{x})$ for evaluating the corresponding $\sigma_i$ to construct the matrix $D^V(\bm{x})$ and the right-hand side term. This guarantees $H({\mathrm{grad}})$ conformity as long as $\bm{\beta}$ exhibits tangential continuity across any facet $F\in\mathcal{F}_h^0$ and is piecewise $C^1$. This is a relaxation of the requirement in  \cite{wang1997novel} that $\bm{\beta}\in C^1(\Omega)$.

\section{Exponentially-fitted finite element spaces in $H({\rm curl})$}\label{sec:curl}

In this section, we will delve into the construction of exponentially-fitted finite element spaces in $\bm{H}(\mathrm{curl})$. Our construction also builds upon the assumption of a constant field $\bm{\theta}$, which grants us the privilege to employ \eqref{eq:identity},
$$
\bm{J}_{\bm{\theta}}^1 \bm{u} = \varepsilon(\nabla \times \bm{u} + \bm{\theta} \times \bm{u}) = \varepsilon E_{\bm{\theta}}^{-1} \nabla \times (E_{\bm{\theta}} \bm{u}).
$$ 
By employing equation \eqref{eq:curl-cd}, we can express $\mathcal{L}^1\bm{u} = \nabla \times (\bm{J}_{\bm{\theta}}^1 \bm{u})$. 

\subsection{$\mathcal{L}^1$ on two-dimensional facet}

Next, we consider the $\bm{H}({\rm curl})$ convection-diffusion operator $\mathcal{L}^1$ on the two-dimensional facet $F$ with unit normal vector $\bm{\nu}$. Let $w$ be a function on $F$, and $\bm{v}$ be a tangential vector field along $F$. Then surface/tangential rotational gradient and curl along $F$ are defined by
$$ 
\nabla_F^\perp w := (\nabla_F w) \times \bm{\nu}, \qquad \nabla_F \times \bm{v} := \nabla_F \cdot (\bm{v} \times \bm{\nu}),
$$ 
where $\nabla_F$ and $\nabla_F \cdot$ denote the surface/tangential gradient and divergence, respectively \cite{gilbarg1977elliptic,deckelnick2005computation}. By respectively replacing the two curl operators with $\nabla_F^\perp$ and $\nabla_F \times$, we derive the surface $\bm{H}({\rm curl})$ convection-diffusion operator:
\begin{equation} \label{eq:surface-curl}
\mathcal{L}_F^1 \bm{u} = \nabla_F^\perp\left( (\bm{J}_{\bm{\theta}}^1\bm{u}) \cdot \bm{\nu} \right) =  \nabla_F^\perp \left( \varepsilon E_{\bm{\theta}}^{-1} \nabla_F\times (E_{\bm{\theta}} \bm{u}) \right).
\end{equation}


\subsection{$\mathcal{L}^1$-spline shape function space}
Given an element $T$, we intend to construct the shape function space as defined below:
\begin{equation} \label{eq:curl-shape}
 \mathcal{S}_{1^-}^1(T) := \mathrm{span}\{ \bm{\phi}^E_{ij}:~ 1\leq i < j \leq 4\},
\end{equation}
where $\bm{\phi}_{ij}^E$ is the basis function associated with the edge $E_{ij}$, which we will define explicitly at a later point. To be more precise, we will simultaneously provide their values $\bm{\phi}_{ij}^E(\bm{x})$ and corresponding fluxes $\bm{j}_{ij}^E(\bm{x})$ at every $\bm{x} \in \bar{T}$.

For a fixed point $\bm{x} \in \bar{T}$, we adopt a geometric convention (illustrated in Figure \ref{fig:sub-facet}). In this specific geometric context, we proceed to appropriately extend $\bm{\phi}_{ij}^E(\bm{x})$ and $\bm{j}_{ij}^E(\bm{x})$ onto certain two-dimensional sub-facets $f_{st}~(1\leq s < t\leq 4)$. These extensions are represented by $\widetilde{\bm{\phi}}_{ij}^E(\bm{y})$ and $\widetilde{\bm{j}}_{ij}^E(\bm{y})$. To initiate this process, we introduce two distinct strategies, both of which form the essential components in the construction of the function space.

\begin{strategy}[tangential constant along sub-edges]\label{sg:curlconstant}
Given $\bm{x} \in \overline{T}$, the extension $\widetilde{\bm{\phi}}_{ij}^E(\bm{y})$ satisfies
$$
    \widetilde{\bm{\phi}}_{ij}^E(\bm{y})\cdot\bm{\tau}_s = \bm{\phi}_{ij}^E(\bm{x}) \cdot \bm{\tau}_s \quad \forall \bm{y} \in l_s, ~1\leq s \leq 4.
$$
\end{strategy}
\begin{strategy}[$\mathcal{L}^1$-spline on sub-facets]\label{sg:curlrestrict}
	Given $ \bm{x} \in \bar{T}$,  $\widetilde{\bm{\phi}}^E_{ij}(\bm{y})$ is an $\mathcal{L}_{f_{st}}^1$-spline, i.e., 
	$$
	\mathcal{L}^1_{f_{st}} \widetilde{\bm{\phi}}^E_{ij} = \nabla_{f_{st}}^\perp (\widetilde{\bm{j}}_{ij}^E \cdot \bm{\nu}_{st} )= \bm{0}, \quad \text{ on } f_{st}, ~1\leq s<t\leq 4.
	$$
\end{strategy}

\begin{remark}[consistency with the edge element when $\bm{\beta} = \bm{0}$]
When the convection field $\bm{\beta}$ is absent, then the exponential function $E_{\bm{\theta}}\equiv1$, and the operator $\mathcal{L}^1$ exclusively has the diffusion component. It is well recognized that the tangential aspect of the edge basis within edge element space (denoted by $\mathcal{P}_{1^-}^1(T)$ in accordance with the naming convention used in this paper) remains constant along any fixed line, precisely aligning with Strategy \ref{sg:curlconstant}. 
Moreover, it becomes evident that the conventional edge basis transforms into an $\mathcal{L}_{f_{st}}^1$-spline in the absence of convection, with $\mathcal{L}^1 = \nabla \times (\varepsilon \nabla \times)$.
As a result, these strategies emulate the characteristics exhibited by the standard edge basis of $\mathcal{P}_{1^-}^1(T)$.
\end{remark}

Without loss of generality, we will deduce $\bm{\phi}^E_{12}(\bm{x})$ and an auxiliary flux ${\bm{j}}^E_{12}(\bm{x})$ using the aforementioned strategies. To begin, applying Strategy \ref{sg:curlrestrict} along with equation \eqref{eq:surface-curl}, we obtain the following: 
\begin{equation}\label{eq:subcurl}
	\left\{
	\begin{aligned}
	\nabla_{f_{st}}^\perp \left(\widetilde{\bm{j}}_{12}^E(\bm{y}) \cdot \bm{\nu}_{st} \right) 
	&= \nabla_{f_{st}}^\perp \left( \varepsilon E_{\bm{\theta}}^{-1}(\bm{y})\nabla_{f_{st}} \times (E_{\bm{\theta}}(\bm{y}) \widetilde{\bm{\phi}}_{12}^E(\bm{y})) \right) = \bm{0}, & \text{ on } f_{st},\\
	\widetilde{\bm{\phi}}_{12}^E(\bm{y})\cdot {\bm{t}}_{st} &= \frac{\delta_{1s,2t}}{|E_{st}|}, & \text{ on }E_{st},
		\end{aligned}
	\right.
\end{equation}
for $1\leq s < t \leq 4$. Here, the second equation emulates the properties of the standard edge element basis. it reveals that $\widetilde{\bm{j}}_{12}^E(\bm{y}) \cdot {\bm{\nu}}_{st}$ remains constant across the sub-facet $f_{st}$, whence
\begin{equation} \label{eq:curl-const}
	\bm{j}_{12}^E(\bm{x}) \cdot \bm{\nu}_{st} = \varepsilon E^{-1}_{\bm{\theta}}(\bm{y})\nabla_{f_{st}} \times (E_{\bm{\theta}}(\bm{y}) \widetilde{\bm{\phi}}_{12}^E(\bm{y})) , \ \ \forall \bm{y} \in f_{st}.
\end{equation}

By multiplying the equation mentioned above with $E_{\bm{\theta}}(\bm{y})$ and integrating it over $f_{st}$, we derive
\begin{equation} \label{eq:curljphi}
{\bm{j}}_{12}^E(\bm{x}) \cdot \frac{{\bm{l}}_s \times {\bm{l}}_t}{2} \dashint_{f_{st}} E_{\bm{\theta}} 
= \varepsilon \int_{f_{st}} \nabla_{f_{st}} \times (E_{\bm{\theta}}(\bm{y}) \widetilde{\bm{\phi}}_{12}^E(\bm{y})),
\end{equation}
where we leverage the identity $\bm{\nu}_{st}=\frac{\bm{l}_s\times\bm{l}_t}{2|f_{st}|}$. 
Utilizing the Stokes theorem and Strategy \ref{sg:curlconstant}, we arrive at
$$
\begin{aligned}
 \int_{f_{st}} \nabla_{f_{st}} \times & (E_{\bm{\theta}}(\bm{y}) \widetilde{\bm{\phi}}_{12}^E(\bm{y}))  = 
\int_{\partial f_{st}} E_{\bm{\theta}}(\bm{y}) \widetilde{\bm{\phi}}_{12}^E(\bm{y})\cdot {\bm{\tau}} \\
	&= {\bm{\phi}}_{12}^E(\bm{x}) \cdot \bm{\tau}_s\int_{{{l}}_s} E_{\bm{\theta}}
	  + \frac{\delta_{1s,2t}}{|E_{st}|}  \int_{E_{st}}E_{\bm{\theta}}
	  - {\bm{\phi}}_{12}^E(\bm{x})\cdot \bm{\tau}_t\int_{{{l}}_t}E_{\bm{\theta}} \\
       & = {\bm{\phi}}_{12}^E(\bm{x}) \cdot \bm{l}_s \dashint_{{{l}}_s} E_{\bm{\theta}}  
       +  \delta_{1s,2t} \dashint_{E_{st}} E_{\bm{\theta}} - {\bm{\phi}}_{12}^E(\bm{x})\cdot \bm{l}_t \dashint_{{{l}}_t} E_{\bm{\theta}},
\end{aligned}
$$
where ${\bm{\tau}}$ denotes the unit tangential vector along the boundary of $f_{st}$. Now, utilizing the geometric interpretation of 2D-Bernoulli function \eqref{eq:bernoulli2d}, we have
$$
\varepsilon	\frac{ \dashint_{{{l}}_s} E_{\bm{\theta}} }{ \dashint_{f_{st}} E_{\bm{\theta}} } =  B_2^\varepsilon(\sigma_s,\sigma_t),
	\quad 
	\varepsilon \frac{\dashint_{{{l}}_t} E_{\bm{\theta}} }{\dashint_{f_{st}} E_{\bm{\theta}} }=B_2^\varepsilon(\sigma_t,\sigma_s),
	 \quad 
\varepsilon	\frac{\dashint_{E_{st}} E_{\bm{\theta}} }{\dashint_{f_{st}} E_{\bm{\theta}} }=B_2^\varepsilon(\sigma_{st},-\sigma_s),
$$
where $\sigma_i = \bm{\beta}\cdot \bm{l}_i$, $\sigma_j = \bm{\beta}\cdot \bm{l}_j$, and $\sigma_{ij} = {\bm{\beta}}\cdot \bm{E}_{ij} = \sigma_j-\sigma_i$ from \eqref{eq:sigma-diff}.

Thus, the equation \eqref{eq:subcurl} can be rewritten as
\begin{equation}\label{eq:algcurl}
\begin{aligned}
{\bm{j}}_{12}^E(\bm{x})\cdot \frac{\bm{l}_s\times\bm{l}_t}{2} 
&-  B_2^\varepsilon(\sigma_s,\sigma_t){\bm{\phi}}_{12}^E(\bm{x})  \cdot {\bm{l}}_s
 +  B_2^\varepsilon(\sigma_t,\sigma_s){\bm{\phi}}_{12}^E(\bm{x})\cdot {\bm{l}}_t \\
&=  B_2^\varepsilon(\sigma_{st},-\sigma_s) \delta_{1s,2t}, \quad \forall 1\leq s < t \leq 4,
	 \end{aligned}
\end{equation}
which defines a linear algebraic system for the unknowns ${\bm{\phi}}_{12}^E(\bm{x})$ and ${\bm{j}}_{12}^E(\bm{x})$. 
Once more, these extensions are solely employed for the purpose of deriving the algebraic equation, a determination guided by the outlined strategies.

\begin{problem}[$\mathcal{L}^1$-spline]\label{problem:curl}
	Find $\bm{\phi}_{12}^E(\bm{x}) $ and $\bm{j}_{12}^E(\bm{x})$ such that for all $x\in \bar{T}$

\begin{equation}\label{eq:syscurl}
	D^E(\bm{x})\left(\begin{matrix}
		{\bm{j}}_{12}^E(\bm{x}) \\		
		{\bm{\phi}}_{12}^E(\bm{x})
	\end{matrix}\right)=
	 B_2^\varepsilon(\sigma_{12},-\sigma_{1})\bm{e}_{12}^E,
\end{equation}
where $\bm{e}_{12}^E=(1,0,\cdots,0)^\top\in\mathbb{R}^6$ and $D^E(\bm{x})$ is a $6\times6$ matrix defined by
\begin{equation}
	D^E(\bm{x})=\left(\begin{matrix}
		({\bm{l}}_{1}\times{\bm{l}}_{2})^\top/2 & - B_2^\varepsilon(\sigma_{1},\sigma_{2}){\bm{l}}_{1}^\top+ B_2^\varepsilon(\sigma_{2},\sigma_{1}){\bm{l}}_{2}^\top \\
		({\bm{l}}_{1}\times{\bm{l}}_{3})^\top/2 & - B_2^\varepsilon(\sigma_{1},\sigma_{3}){\bm{l}}_{1}^\top+ B_2^\varepsilon(\sigma_{3},\sigma_{1}){\bm{l}}_{3}^\top \\
		({\bm{l}}_{1}\times{\bm{l}}_{4})^\top/2 & - B_2^\varepsilon(\sigma_{1},\sigma_{4}){\bm{l}}_{1}^\top+ B_2^\varepsilon(\sigma_{4},\sigma_{1}){\bm{l}}_{4}^\top \\
		({\bm{l}}_{2}\times{\bm{l}}_{3})^\top/2 & - B_2^\varepsilon(\sigma_{2},\sigma_{3}){\bm{l}}_{2}^\top+ B_2^\varepsilon(\sigma_{3},\sigma_{2}){\bm{l}}_{3}^\top \\
		({\bm{l}}_{2}\times{\bm{l}}_{4})^\top/2 & - B_2^\varepsilon(\sigma_{2},\sigma_{4}){\bm{l}}_{2}^\top+ B_2^\varepsilon(\sigma_{4},\sigma_{2}){\bm{l}}_{4}^\top \\
		({\bm{l}}_{3}\times{\bm{l}}_{4})^\top/2 & - B_2^\varepsilon(\sigma_{3},\sigma_{4}){\bm{l}}_{3}^\top+ B_2^\varepsilon(\sigma_{4},\sigma_{3}){\bm{l}}_{4}^\top 
	\end{matrix}\right).
\end{equation}
\end{problem}

The solution to Problem~\ref{problem:curl} (whose later proof establishes its uniqueness or existence) defines the point values of the spline basis function $\bm{\phi}_{12}^E$ and the auxiliary flux $\bm{j}_{12}^E$ at $\bm{x} \in \bar{T}$. A similar process is carried out for $\bm{\phi}_{ij}^E$ and $\bm{j}_{ij}^E~(1\leq i < j \leq 4)$ associated with the edge $E_{ij}$. This concludes the construction of the shape function space given by \eqref{eq:curl-shape}.

\begin{remark}[variable $\bm{\beta}$]
In the aforementioned derivation, we assume that $\bm{\theta}$ (or $\bm{\beta}$) is a constant vector field. In fact, the final Problem \ref{problem:curl} only provides the values of the basis functions and their corresponding fluxes at the point $\bm{x}$. Therefore, when solving for the point $\bm{x}$, it is only necessary to use $\bm{\beta}(\bm{x})$ to evaluate the corresponding $\sigma_i$ and $\sigma_{ij}$ for constructing the matrix $D^E(\bm{x})$ and the right-hand side term.
\end{remark}

\begin{remark}[approximation property of $ \mathcal{S}_{1^-}^1$] \label{rm:approx-curl}
	It is straightforward that any constant vector $\bm{c}$ is contained in $\mathcal{S}_{1^-}^1(T)$ since the constant vector $\bm{c}$ and  the corresponding auxiliary flux is $\bm{\beta}(\bm{x})\times\bm{c}$ satisfy all the Strategies in the construction.
\end{remark}

\begin{remark}[local smoothness of $ \mathcal{S}_{1^-}^1(T)$]
If $\bm{\beta}$ is $C^1$ in the element $T$, then $D^E(\bm{x})$ is also $C^1$ in the element, as the Bernoulli function is smooth. Consequently, the spline basis is also $C^1$ within the element.
\end{remark}
 
\subsection{Well-posedness of $\mathcal{S}_{1^-}^1(T)$}
We will establish the well-posedness of Problem~\ref{problem:curl}, confirming its unique solvability for all $\bm{x}\in \bar{T}$.
\begin{lemma}[well-posedness of Problem \ref{problem:curl}] \label{lemma:curlunisolve}
For any $\bm{x}\in \bar{T}$, there exists a unique solution to Problem~\ref{problem:curl}.
\end{lemma}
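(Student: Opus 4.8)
The plan is to show that the coefficient matrix $D^E(\bm{x})$ is nonsingular for every $\bm{x}\in\bar{T}$; since \eqref{eq:syscurl} is a square system, this yields both existence and uniqueness at once. Equivalently, I would prove that the homogeneous system---obtained by setting the right-hand side of \eqref{eq:algcurl} to zero---admits only the trivial solution, writing $\bm{j}:=\bm{j}_{12}^E(\bm{x})$ and $\bm{\phi}:=\bm{\phi}_{12}^E(\bm{x})$. The guiding principle is the scalar case, where one checks directly that $\det(D^V)=6|T|\sum_i\lambda_i B_1^\varepsilon(\sigma_i)>0$ by eliminating the flux via $\sum_i\lambda_i\bm{l}_i=\bm{0}$ and invoking positivity of $B_1^\varepsilon$; I would carry out the exact analogue here.

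First I would eliminate $\bm{j}$. Crossing the identity \eqref{eq:identityli} with $\bm{l}_t$ gives $\sum_{s\ne t}\lambda_s(\bm{l}_s\times\bm{l}_t)=\bm{0}$ for each $t$, so forming the combination $\sum_{s\ne t}\lambda_s(\cdot)$ of the homogeneous equations \eqref{eq:algcurl} annihilates every term $\bm{j}\cdot(\bm{l}_s\times\bm{l}_t)/2$. Writing $c_s:=\bm{\phi}\cdot\bm{l}_s$ and $B_{st}:=B_2^\varepsilon(\sigma_s,\sigma_t)$, and using that both $\bm{j}\cdot(\bm{l}_s\times\bm{l}_t)$ and $B_{st}c_s-B_{ts}c_t$ are antisymmetric in $(s,t)$ (so the combination is a legitimate sum of the $s<t$ equations), this leaves the reduced square system
\[
\sum_{s\ne t}\lambda_s\bigl(B_{st}c_s-B_{ts}c_t\bigr)=0,\qquad t=1,2,3,4,
\]
that is, $Mc=0$ with $M_{ts}=\lambda_s B_{st}$ for $s\ne t$ and $M_{tt}=-\sum_{s\ne t}\lambda_s B_{ts}$. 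Since $\bm{\phi}\mapsto(c_s)_s$ is injective onto $W:=\{c:\sum_s\lambda_s c_s=0\}$ by \eqref{eq:identityli}, it suffices to prove $\ker M\cap W=\{\bm{0}\}$.

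The heart of the argument is a positivity analysis of $M$. Using \eqref{eq:identityli} one verifies $\sum_t\lambda_t M_{ts}=0$, so $\bm{\lambda}=(\lambda_1,\dots,\lambda_4)^\top$ is a left null vector and $\mathrm{range}(M)\subseteq W$. For an interior point all $\lambda_s>0$ and, by the positivity of the 2D-Bernoulli function, all $B_{st}>0$; hence $L:=-M$ is an irreducible $Z$-matrix (strictly negative off-diagonals, positive diagonal) possessing the entrywise-positive left null vector $\bm{\lambda}$. By the theory of singular irreducible $M$-matrices (Perron--Frobenius), $\ker L=\ker M$ is one-dimensional and spanned by an entrywise-positive vector $c^\ast>0$. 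As $\bm{\lambda}>0$, we get $\sum_s\lambda_s c^\ast_s>0$, so $c^\ast\notin W$ and $\ker M\cap W=\{\bm{0}\}$, forcing $\bm{\phi}=\bm{0}$. Substituting back into the homogeneous \eqref{eq:algcurl} gives $\bm{j}\cdot(\bm{l}_s\times\bm{l}_t)=0$ for all $s<t$; since the $\bm{l}_s$ span $\mathbb{R}^3$, so do the cross products $\bm{l}_s\times\bm{l}_t$, whence $\bm{j}=\bm{0}$.

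I expect the main obstacle to be twofold. The essential one is justifying the one-dimensionality and positivity of $\ker M$ purely from its sign pattern together with the single relation $\bm{\lambda}^\top M=0$; this is exactly where positivity of $B_2^\varepsilon$ (the generalization of the scalar positivity used for $\det D^V$) is indispensable, and I would isolate it as a short linear-algebra lemma on $Z$-matrices with a positive left null vector. The second, more technical difficulty is the degenerate boundary case $\bm{x}\in\partial T$, where some $\lambda_s$ vanish, certain $\bm{l}_s\times\bm{l}_t$ collapse, and $L$ loses irreducibility; I would handle this by a direct elimination adapted to the active sub-simplex (for instance, at a vertex the rows containing $\bm{l}_4=\bm{0}$ decouple and determine $\bm{\phi}$ through the independent edge vectors, after which $\bm{j}$ is recovered as above), or alternatively by a continuity argument showing that $\det D^E(\bm{x})$ retains a fixed sign up to $\partial T$.
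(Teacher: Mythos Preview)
Your elimination of $\bm{j}$ via $\sum_i\lambda_i\bm{l}_i=\bm{0}$ is exactly the paper's first move, but the positivity step then diverges. The paper fixes one index with $\lambda_1>0$ (always possible on $\bar T$), substitutes $\lambda_1\bm{l}_1=-\sum_{i\ge2}\lambda_i\bm{l}_i$ to pass to the basis $\bm{l}_2,\bm{l}_3,\bm{l}_4$, reduces to a concrete $3\times3$ matrix $\tilde A$, and expands $\det\tilde A$ explicitly as a sum of products $B^{ir}B^{js}B^{kt}\lambda_r\lambda_s\lambda_t$ that are all nonnegative, with the term $(r,s,t)=(1,1,1)$ strictly positive; this single computation covers every $\bm{x}\in\bar T$. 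Your route instead keeps the symmetric $4\times4$ system $Mc=0$ and exploits the Perron--Frobenius structure of the singular irreducible $M$-matrix $-M$: conceptually cleaner and determinant-free, but the irreducibility genuinely needs all $\lambda_i>0$, so the boundary is a real case distinction rather than a formality. Your continuity alternative does not close that gap (nonvanishing of $\det D^E$ on the interior only yields $\det D^E\ge0$ on $\partial T$, not $>0$). The sub-simplex elimination you sketch does go through: when some $\lambda_i$ vanish, the corresponding off-diagonal columns of $M$ vanish, $M$ block-triangularizes, the PF argument runs on the principal block indexed by $\{i:\lambda_i>0\}$ with positive left null vector $(\lambda_i)_{\lambda_i>0}$, and the remaining $c_i$ are then killed by the nonzero diagonal entries $M_{ii}$. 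Still, the paper's asymmetry trick (choose $\lambda_1>0$, reduce to $3\times3$) is the cheapest way to get a uniform proof.
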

\begin{proof}
Without loss of generality, we make the assumption that $\lambda_1(\bm{x}) > 0$, and by utilizing the identity \eqref{eq:identityli}, we derive the following expression:
$$
\bm{0} = \left(\sum_{i=1}^4 \lambda_i \bm{l}_i \right) \times \bm{l}_j = \left(\sum_{i\neq j}^4 \lambda_i \bm{l}_i \right) \times \bm{l}_j, \quad \text{ for }j=2,3,4.
$$
This allows us to eliminate the upper left block of the matrix $D^E(\bm{x})$, resulting in the following:
$$
\begin{aligned}
{\lambda_1^3}\det&(D^E(\bm{x}))=\\
&\left|\begin{matrix}
	\bm{0}^\top &\sum_{i\neq 2} \lambda_iB_2^\varepsilon{(\sigma_2,\sigma_i)}\bm{l}_2^\top-\lambda_iB_2^\varepsilon{(\sigma_i,\sigma_2)}\bm{l}_i^\top \\
	\bm{0}^\top  &\sum_{i\neq 3} \lambda_iB_2^\varepsilon{(\sigma_3,\sigma_i)}\bm{l}_3^\top-\lambda_iB_2^\varepsilon{(\sigma_i,\sigma_3)}\bm{l}_i^\top \\
	\bm{0}^\top &\sum_{i\neq 4} \lambda_iB_2^\varepsilon{(\sigma_4,\sigma_i)}\bm{l}_4^\top-\lambda_iB_2^\varepsilon{(\sigma_i,\sigma_4)}\bm{l}_i^\top \\
	({\bm{l}}_{2}\times{\bm{l}}_{3})^\top/2  &- B_2^\varepsilon(\sigma_{2},\sigma_{3}){\bm{l}}_{2}^\top+ B_2^\varepsilon(\sigma_{3},\sigma_{2}){\bm{l}}_{3}^\top \\
	({\bm{l}}_{2}\times{\bm{l}}_{4})^\top/2  &- B_2^\varepsilon(\sigma_{2},\sigma_{4}){\bm{l}}_{2}^\top+ B_2^\varepsilon(\sigma_{4},\sigma_{2}){\bm{l}}_{4}^\top \\
	({\bm{l}}_{3}\times{\bm{l}}_{4})^\top/2  &- B_2^\varepsilon(\sigma_{3},\sigma_{4}){\bm{l}}_{3}^\top+ B_2^\varepsilon(\sigma_{4},\sigma_{3}){\bm{l}}_{4}^\top 
\end{matrix}\right|:=\left|\begin{matrix}
0 & A \\ B & C
\end{matrix}\right|.
\end{aligned}
$$
Substituting $\lambda_1\bm{l}_1=-\sum_{i=2}^{4}\lambda_i\bm{l}_i$, we obtain
$$
A =  \tilde{{A}}
\left(\begin{matrix}
	 \bm{l}_2^\top \\  \bm{l}_3^\top\\\bm{l}_4^\top
\end{matrix}\right),
$$
where
$$\tilde{{A}}= \left(\begin{matrix}
	\lambda_2B^{12}+\sum_{i\neq 2}\lambda_iB^{2i} & \lambda_3(B^{12}-B^{32})& \lambda_4(B^{12}-B^{42})\\
	\lambda_2(B^{13}-B^{23})&	\lambda_3B^{13}+\sum_{i\neq 3}\lambda_iB^{3i} & \lambda_4(B^{13}-B^{43})\\
	\lambda_2(B^{14}-B^{24})& \lambda_3(B^{14}-B^{34})&	\lambda_4B^{14}+\sum_{i\neq 4}\lambda_iB^{4i} 
\end{matrix}\right),
$$
with $B^{ij} :=B_2^\varepsilon(\sigma_i,\sigma_j)$ for simplicity.
Observe that $\bm{l}_2$, $\bm{l}_3$, and $\bm{l}_4$ are linearly independent when $\lambda_1 > 0$. Hence, to establish the nonsingularity of matrix ${A}$, it suffices to demonstrate the nonsingularity of matrix $\tilde{{A}}$. By a direct calculation, we can ascertain that the determinant of $\tilde{{A}}$ follows the expression
$$
\begin{aligned}
\det(\tilde{{A}}) &= \sum_{m=1}^4\sum_{\substack{1\le i<j<k\le 4,\\i,j,k\neq m}}\sum_{\substack{1\le r,s,t\le 4\\(r,s,t)\in S^E_m}} B^{ir}B^{js}B^{kt}\lambda_r\lambda_s\lambda_t,
\end{aligned}
$$
where 
$$
\begin{aligned}
	S^E_1 &=\{(r,s,t)|r\neq 2,s\neq 3,t\neq 4 \text{ and } 1 \in \{r,s,t\} \} \backslash\left\{(1,4,3),(3,2,1),(4,1,2)\right\},\\
	S^E_2 &=\{(r,s,t)|r\neq 1,s\neq 3,t\neq 4 \text{ and } 2 \in \{r,s,t\} \} \backslash \left\{(2,4,3),(3,1,2),(4,2,1)\right\},\\
	S^E_3 &=\{(r,s,t)|r\neq 1,s\neq 2,t\neq 4 \text{ and } 3 \in \{r,s,t\} \} \backslash \left\{(2,1,3),(3,4,2),(4,3,1)\right\},\\
	S^E_4  &=\{(r,s,t)|r\neq 1,s\neq 2,t\neq 3 \text{ and } 4 \in \{r,s,t\} \} \backslash \left\{(2,1,4),(3,1,4),(4,3,2)\right\}.
\end{aligned}
$$
The positivity of the Bernoulli function, $\lambda_1 > 0$, and the inclusion of $(1,1,1)$ in $S_1^E$ collectively establish the nonsingularity of $\tilde{A}$, and consequently, the nonsingularity of the matrix $D^E(\bm{x})$. Therefore, it yields that $\det(D^E(\bm{x})) > 0$.
\end{proof}


We present visualizations of the $ \mathcal{S}_{1^-}^1(T)$ basis function on a reference element. In Figure \ref{fig:curlbasis1}, we depict plots showcasing the first component of the $\bm{H}({\rm curl})$ exponentially-fitted basis function along with a vector plot for varying parameters: $\varepsilon=1$ and $\varepsilon=0.01$, in the case of $\bm{\beta}=(1,2,3)^T$. It can be observed that the function exhibits continuity and a linear-like basis for a large $\varepsilon$, while displaying an exponential-like basis for a small $\varepsilon$. This indicates the flexibility of the function in accommodating both convection-dominated and diffusion-dominated cases. 

\begin{figure}[!htbp]
 \centering
 \subfloat[contour of the first component $(\varepsilon=1)$]{\includegraphics[width=.4\textwidth]{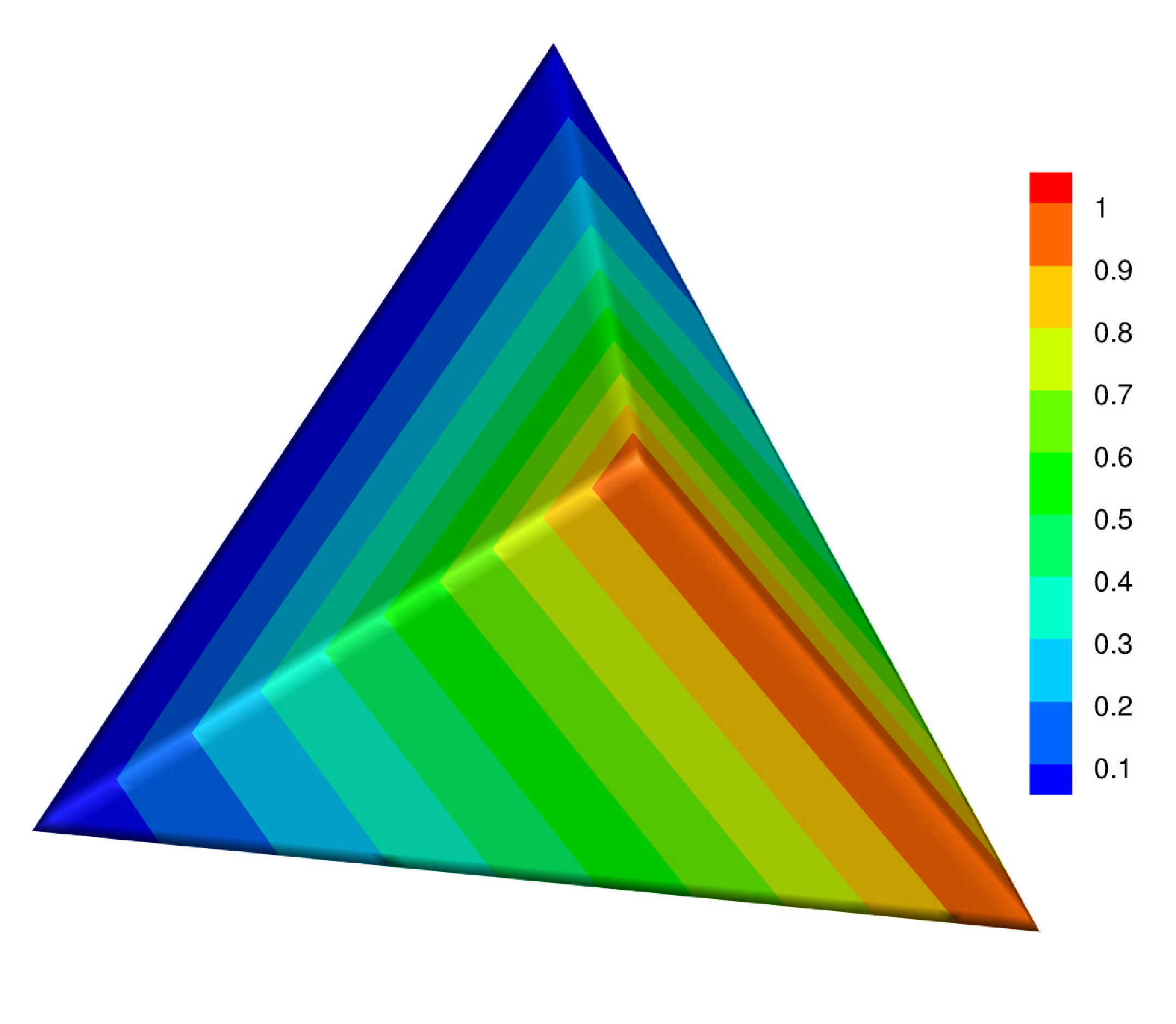}}
 \subfloat[vector plot $(\varepsilon=1)$]{\includegraphics[width=.42\textwidth]{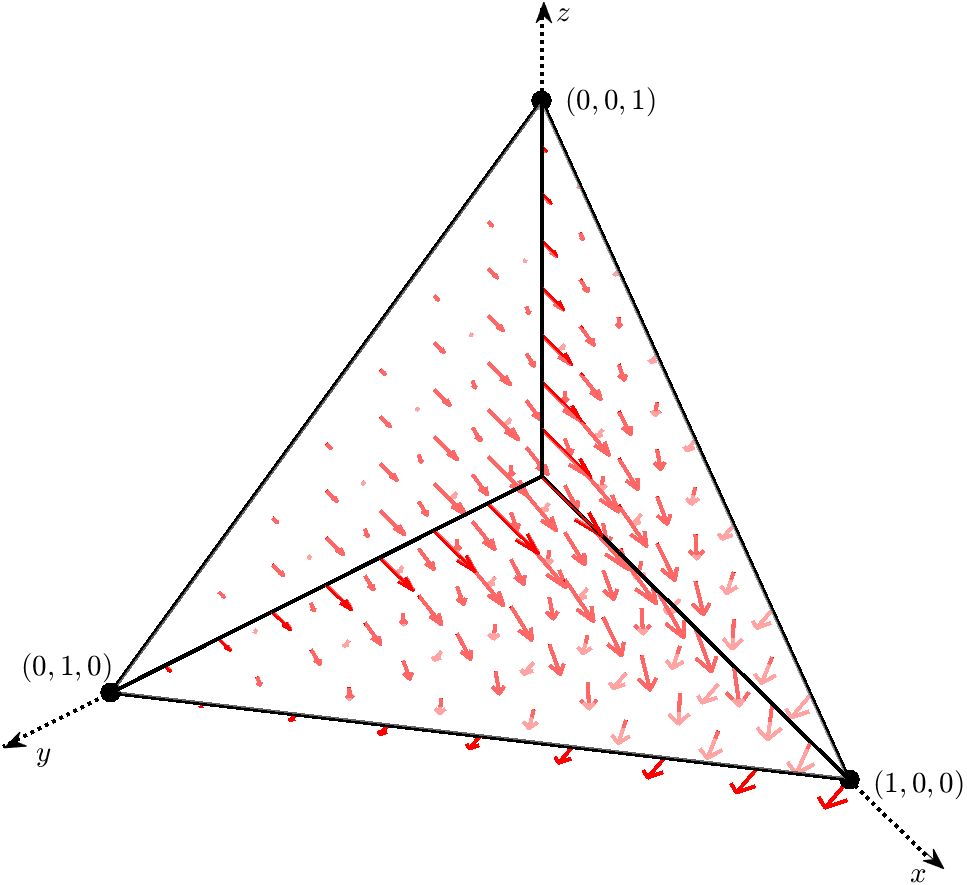}}\\
 \subfloat[contour of the first component $(\varepsilon=0.01)$]{\includegraphics[width=.42\textwidth]{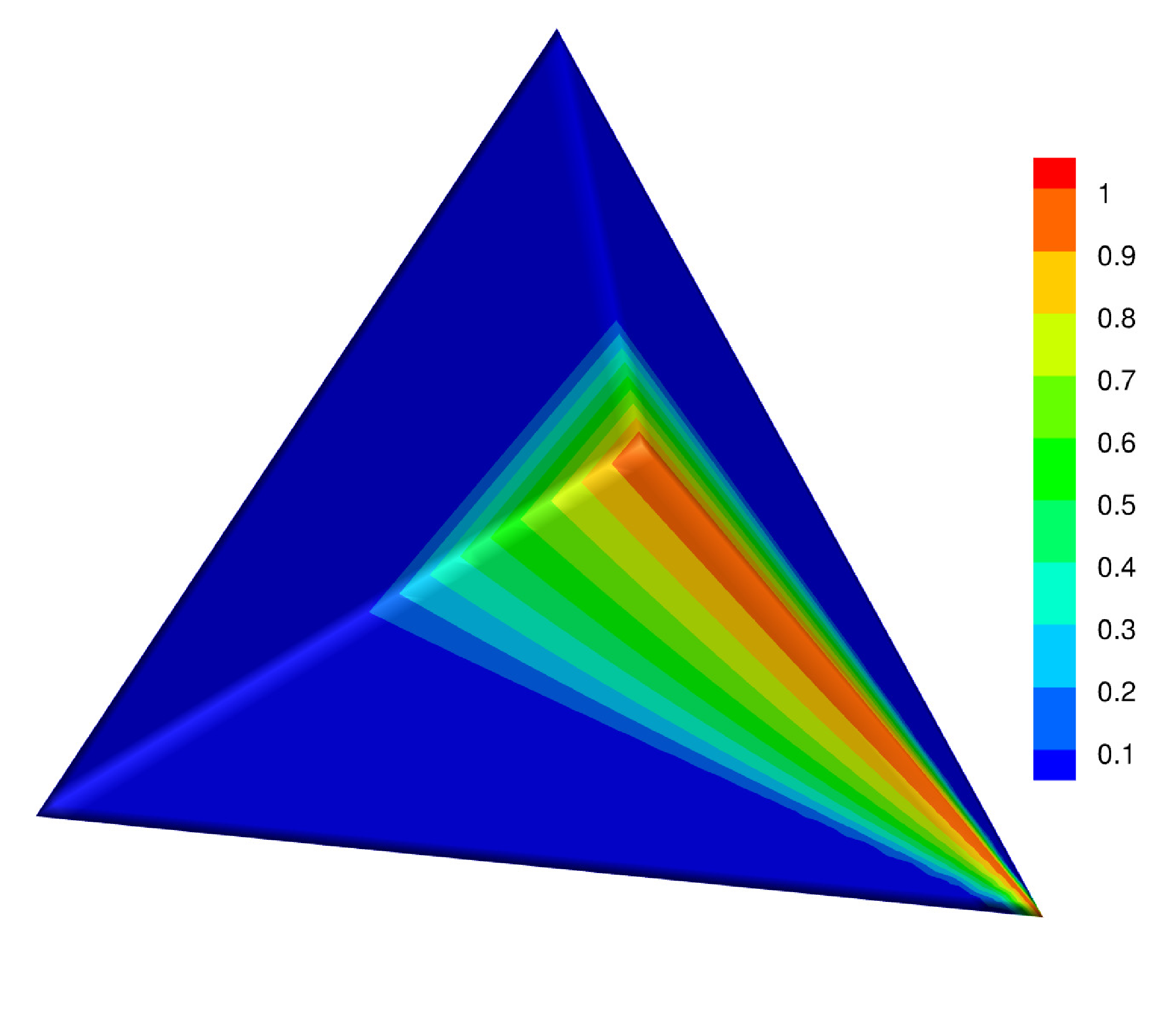}}
 \subfloat[vector plot $(\varepsilon=0.01)$]{\includegraphics[width=.4\textwidth]{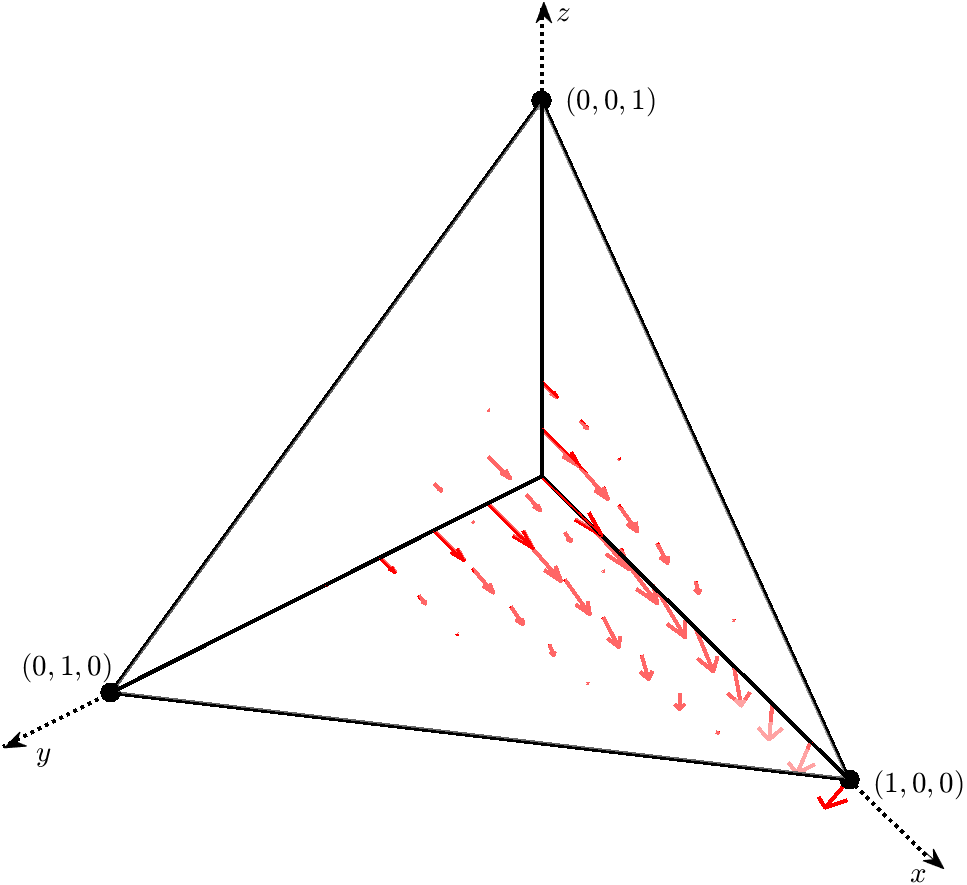}}
 \caption{Visualization of $\bm{H}({\rm curl})$ exponentially-fitted basis on a reference element: contour of the first component (left) and vector plot (right) for cases dominated by diffusion (top) and convection (bottom).}\label{fig:curlbasis1}
\end{figure}

\subsection{Properties of $\mathcal{S}_{1^-}^1$}

We firstly establish an identical set of unisolvent degrees of freedom (DOFs) for $ \mathcal{S}_{1^-}^1(T)$, matching those of the standard edge element.
\begin{lemma}[DOFs of $\mathcal{S}_{1^-}^1(T)$]\label{lemma:curldof} It holds that
	\begin{equation}\label{eq:curldof}
		{\bm{\phi}}_{ij}^E(\bm{x})\cdot {\bm{t}}_{st}= \frac{\delta_{is,jt}}{|E_{st}|}, \quad \forall \bm{x} \in E_{st}, ~1\leq s<t\leq 4,
	\end{equation}
 without necessitating that $\bm{\beta}$ is a constant vector.
\end{lemma}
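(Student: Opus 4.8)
The plan is to read the degree of freedom directly off the defining linear system of Problem~\ref{problem:curl}, evaluated at a point $\bm{x}$ on the closed edge $E_{st}$. The key geometric observation is that when $\bm{x}\in E_{st}$ the three points $\bm{x},\bm{x}_s,\bm{x}_t$ are collinear, so the sub-facet $f_{st}$ collapses: $\bm{l}_s\times\bm{l}_t=\bm{0}$. Writing $\bm{x}=(1-\theta)\bm{x}_s+\theta\bm{x}_t$ gives $\bm{l}_s=-|\bm{l}_s|\,\bm{t}_{st}$, $\bm{l}_t=|\bm{l}_t|\,\bm{t}_{st}$ and $|\bm{l}_s|+|\bm{l}_t|=|E_{st}|$. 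I would then isolate the row of $D^E(\bm{x})$ indexed by the pair $(s,t)$ in \eqref{eq:algcurl}: since its flux block $(\bm{l}_s\times\bm{l}_t)^\top/2$ vanishes, the auxiliary flux $\bm{j}_{ij}^E(\bm{x})$ drops out entirely and the row collapses to a scalar equation for $\psi:=\bm{\phi}_{ij}^E(\bm{x})\cdot\bm{t}_{st}$,
\[
\bigl(|\bm{l}_s|\,B_2^\varepsilon(\sigma_s,\sigma_t)+|\bm{l}_t|\,B_2^\varepsilon(\sigma_t,\sigma_s)\bigr)\,\psi
= B_2^\varepsilon(\sigma_{st},-\sigma_s)\,\delta_{is,jt}.
\]
Because this equation involves only $\sigma_s,\sigma_t,\sigma_{st}$ evaluated at the single point $\bm{x}$ through $\bm{\beta}(\bm{x})$, the argument never invokes constancy of $\bm{\beta}$, which is precisely the strengthening asserted in the lemma.

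The coefficient of $\psi$ is strictly positive on the whole closed edge: in the interior both Bernoulli terms are positive, while at an endpoint one of $|\bm{l}_s|,|\bm{l}_t|$ vanishes and the other equals $|E_{st}|$, leaving a single positive term. Hence for $(i,j)\neq(s,t)$ the right-hand side vanishes and we immediately obtain $\psi=0=\delta_{is,jt}/|E_{st}|$. It remains to treat the diagonal case $(i,j)=(s,t)$, where the claim $\psi=1/|E_{st}|$ is equivalent to the normalization identity
\[
|\bm{l}_s|\,B_2^\varepsilon(\sigma_s,\sigma_t)+|\bm{l}_t|\,B_2^\varepsilon(\sigma_t,\sigma_s)=|E_{st}|\,B_2^\varepsilon(\sigma_{st},-\sigma_s).
\]

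Establishing this Bernoulli identity is the main obstacle, since $B_2^\varepsilon$ looks intrinsically two-dimensional whereas the data here are collinear. I would prove it from the integral representation $B_2^\varepsilon(\sigma_1,\sigma_2)=\varepsilon\,I_1(\sigma_1)/I_2(\sigma_1,\sigma_2)$, with $I_1(\sigma)=\int_0^1 e^{\sigma x/\varepsilon}\,\mathrm{d}x$ and $I_2(\sigma_1,\sigma_2)=2\iint_\Delta e^{(\sigma_1x_1+\sigma_2x_2)/\varepsilon}\,\mathrm{d}x$ over the reference triangle $\Delta$. Setting $a=|\bm{l}_s|$, $b=|\bm{l}_t|$, $c=\bm{\beta}(\bm{x})\cdot\bm{t}_{st}$ gives $\sigma_s=-ac$, $\sigma_t=bc$, $\sigma_{st}=(a+b)c$. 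Using the symmetry $I_2(\sigma_1,\sigma_2)=I_2(\sigma_2,\sigma_1)$, the left-hand side equals $\varepsilon\,(a I_1(\sigma_s)+b I_1(\sigma_t))/I_2(\sigma_s,\sigma_t)$, and a direct evaluation yields $a I_1(\sigma_s)+b I_1(\sigma_t)=\tfrac{\varepsilon}{c}\bigl(e^{bc/\varepsilon}-e^{-ac/\varepsilon}\bigr)$ and $(a+b) I_1(\sigma_{st})=\tfrac{\varepsilon}{c}\bigl(e^{(a+b)c/\varepsilon}-1\bigr)$. After cancelling the common factor $e^{(a+b)c/\varepsilon}-1$, the identity reduces to the single claim $e^{-ac/\varepsilon}I_2(\sigma_{st},-\sigma_s)=I_2(\sigma_s,\sigma_t)$.

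This last equality I would settle by passing to barycentric coordinates, writing $I_2(\sigma_1,\sigma_2)=2\,G(0,\sigma_1,\sigma_2)$ with $G(\alpha_0,\alpha_1,\alpha_2):=\iint_\Delta e^{(\alpha_0\lambda_0+\alpha_1\lambda_1+\alpha_2\lambda_2)/\varepsilon}\,\mathrm{d}x$ and $\lambda_0=1-x_1-x_2$. The function $G$ is fully symmetric in its three exponents and satisfies $G(\alpha_0+\kappa,\alpha_1+\kappa,\alpha_2+\kappa)=e^{\kappa/\varepsilon}G(\alpha_0,\alpha_1,\alpha_2)$ since $\lambda_0+\lambda_1+\lambda_2=1$. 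Shifting by $\kappa=-ac$ and then permuting,
\[
e^{-ac/\varepsilon}I_2(\sigma_{st},-\sigma_s)=2\,e^{-ac/\varepsilon}G(0,(a+b)c,ac)=2\,G(-ac,bc,0)=2\,G(0,-ac,bc)=I_2(\sigma_s,\sigma_t),
\]
which is exactly the required relation; the degenerate value $c=0$ is then covered by continuity of $B_2^\varepsilon$. Conceptually, this symmetry is the algebraic shadow of the additivity $\int_{E_{st}}E_{\bm{\theta}}=\int_{l_s}E_{\bm{\theta}}+\int_{l_t}E_{\bm{\theta}}$ of line integrals over the collinear segments, which is what makes the tangential degree of freedom normalize to $1/|E_{st}|$. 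Combining the two cases completes the proof of \eqref{eq:curldof}.
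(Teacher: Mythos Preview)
Your proof is correct and follows the same route as the paper's: isolate the row of \eqref{eq:algcurl} indexed by $(s,t)$, observe that $\bm{l}_s\times\bm{l}_t=\bm{0}$ on $E_{st}$ so the flux unknown drops out, and reduce to the scalar normalization identity
\[
|\bm{l}_s|\,B_2^\varepsilon(\sigma_s,\sigma_t)+|\bm{l}_t|\,B_2^\varepsilon(\sigma_t,\sigma_s)=|E_{st}|\,B_2^\varepsilon(\sigma_{st},-\sigma_s).
\]
The only difference is how this identity is verified. You compute the reference integrals $I_1,I_2$ explicitly, factor out $e^{(a+b)c/\varepsilon}-1$, and then establish $e^{-ac/\varepsilon}I_2(\sigma_{st},-\sigma_s)=I_2(\sigma_s,\sigma_t)$ via the shift-and-permute symmetry of the barycentric integral $G$. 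The paper bypasses all of this by reading each $B_2^\varepsilon$ term as a ratio of exponential averages (numerator on a sub-edge, denominator on $f_{st}$), so that the left-hand side becomes $\varepsilon\bigl(\int_{l_s}E_{\bm{\theta}(\bm{x})}+\int_{l_t}E_{\bm{\theta}(\bm{x})}\bigr)\big/\dashint_{f_{st}}E_{\bm{\theta}(\bm{x})}$; since $l_s\cup l_t=E_{st}$ when $\bm{x}\in E_{st}$, this equals $|E_{st}|\,B_2^\varepsilon(\sigma_{st},-\sigma_s)$ immediately. Your closing remark about the ``algebraic shadow of additivity'' is exactly this observation, and invoking it directly would let you replace the $I_1,I_2,G$ calculation by a single line.
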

\begin{proof}
	As shown in Figure \ref{fig:curl-dof}, we revisit equation \eqref{eq:algcurl} governing the basis function $\bm{\phi}^E_{ij}(\bm{x})$ on edge ${E}_{st}$:
	$$
	\frac{\bm{l}_s\times \bm{l}_t}{2} \cdot \bm{j}_{ij}^E(\bm{x}) + \big( -B_2^\varepsilon(\sigma_s,\sigma_t) \bm{l}_s 
	+ B_2^\varepsilon(\sigma_t,\sigma_s) \bm{l}_t\big)\cdot \bm{\phi}^E_{ij}(\bm{x})  = \delta_{is,jt}B_2^\varepsilon(\sigma_{st},-\sigma_{s}).
	$$
	Notably, on edge $E_{st}$, we have $\bm{l}_s\parallel\bm{l}_t$, resulting in $\bm{l}_s\times\bm{l}_t=\bm{0}$. Additionally, $\bm{l}_s = -|\bm{l}_s| \bm{t}_{st}$ and $\bm{l}_t = |\bm{l}_t|\bm{t}_{st}$. Thus, the aforementioned equation simplifies, for $\bm{x} \in E_{st}$:
	$$
	\big(B_2^\varepsilon(\sigma_s,\sigma_t) |\bm{l}_s| + B_2^\varepsilon(\sigma_t,\sigma_s\big) |\bm{l}_t|) \bm{t}_{st} \cdot \bm{\phi}^E_{ij}(\bm{x}) = 
	\delta_{is,jt}B_2^\varepsilon(\sigma_{st},-\sigma_{s}).
	$$
	By employing the geometric interpretation of the 2D-Bernoulli function, we deduce that on edge $E_{st}$:
	$$
	B_2^\varepsilon(\sigma_s,\sigma_t) |\bm{l}_s| + B_2^\varepsilon(\sigma_t,\sigma_s) |\bm{l}_t| 
	=  \varepsilon\frac{  \int_{l_s} E_{\bm{\theta}(\bm{x})} +\int_{l_t} E_{\bm{\theta}(\bm{x})}}{ \dashint_{f_{st}} E_{\bm{\theta}(\bm{x})} }=\varepsilon\frac{\int_{E_{st}} E_{\bm{\theta}(\bm{x})}}{\dashint_{f_{st}} E_{\bm{\theta}(\bm{x})} }=|E_{st}|B_2^\varepsilon(\sigma_{st},-\sigma_s).
	$$
	Hence, the desired result \eqref{eq:curldof} follows.
\end{proof}
\begin{figure}[!htbp]
	\centering
	\subfloat[Illustration for DOF]{
		\includegraphics[width=.3\textwidth]{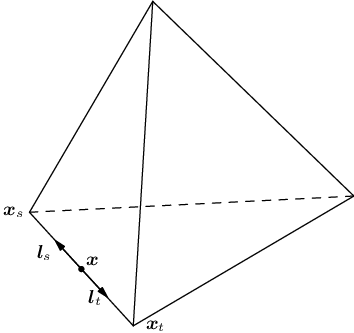} \label{fig:curl-dof}
	}
	\subfloat[Illustration for conformity]{
		\includegraphics[width=.3\textwidth]{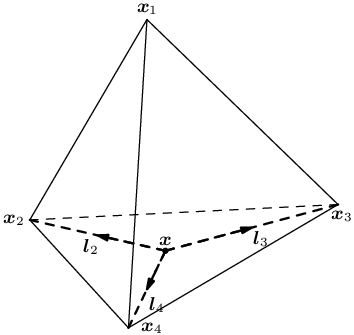} \label{fig:curl-conform}
	}

	\caption{Visual representation in the proof of $\mathrm{curl}$ DOFs and conformity.}
	\label{fig:geo-notation}
\end{figure}
So far, we have established the shape function space of $ \mathcal{S}_{1^-}^1$ and the unisolvent DOFs, thus concluding the construction of the finite element space. For a given mesh $\mathcal{T}_h$, its finite element space is denoted by
\begin{equation} \label{eq:curl-space}
 \mathcal{S}_{1^-}^1(\mathcal{T}_h) := \{\bm{v}|_T \in \mathcal{E}_1^-\Lambda^1(T),\forall T \in \mathcal{T}_h:~ \bm{v}\cdot \bm{t} \text{ is continuous across all edges} \}.
\end{equation}
Surprisingly, despite $ \mathcal{S}_{1^-}^1$ not being comprised of local polynomials, the aforementioned finite element space still maintains the comformity.

\begin{lemma}[$\bm{H}(\mathrm{curl})$-conformity]\label{lm:curlconform}
	The local basis functions satisfy
	\begin{equation} \label{eq:curl-conforming-Fs}
	\bm{n}_s \times \bm{\phi}^E_{ij}=\bm{0},  \text{ on } F_s \subset \partial T, ~\text{for }s=i,j,
	\end{equation}
	without necessitating that $\bm{\beta}$ is a constant vector. As a consequence, if $\bm{\beta}$ is tangential continuous across any facet $F \in \mathcal{F}_h^0$ and piecewise $C^1$, then the function space $\mathcal{S}_{1^-}^1(\mathcal{T}_h)$ is $\bm{H}({\rm curl})$-conforming.
\end{lemma}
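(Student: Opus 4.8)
The plan is to first reduce the conformity statement to the pointwise identity \eqref{eq:curl-conforming-Fs} and then prove that identity directly from the algebraic system \eqref{eq:algcurl}. By the symmetry of the construction it suffices to treat one basis function and one facet, so I would fix $(i,j)=(1,2)$ and the facet $F_1$ opposite $\bm{x}_1$, which is one of the two facets not containing the edge $E_{12}$ (the other being $F_2$, handled identically). For $\bm{x}$ in the interior of $F_1$ we have $\lambda_1(\bm{x})=0$, so \eqref{eq:identityli} degenerates to $\sum_{k=2}^4\lambda_k\bm{l}_k=\bm{0}$; in particular $\bm{l}_2,\bm{l}_3,\bm{l}_4$ lie in the plane of $F_1$ and span its tangent space. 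Since $\bm{n}_1\times\bm{\phi}_{12}^E=\bm{0}$ is equivalent to $\bm{\phi}_{12}^E$ having no tangential component, the goal becomes to show $\bm{\phi}_{12}^E(\bm{x})\cdot\bm{l}_k=0$ for $k=2,3,4$.

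The key step is to isolate the three equations of \eqref{eq:algcurl} indexed by the pairs $(s,t)\in\{(2,3),(2,4),(3,4)\}$. For these pairs $\delta_{1s,2t}=0$, so the right-hand sides vanish, and because $\bm{l}_s,\bm{l}_t$ are tangent to $F_1$ the vectors $\bm{l}_s\times\bm{l}_t$ are all parallel to $\bm{n}_1$; hence the flux enters only through the single scalar $\bm{j}_{12}^E\cdot\bm{n}_1$. Writing $a_k:=\bm{\phi}_{12}^E(\bm{x})\cdot\bm{l}_k$ and adjoining the linear relation $\sum_{k=2}^4\lambda_k a_k=0$, obtained by dotting $\sum_{k=2}^4\lambda_k\bm{l}_k=\bm{0}$ with $\bm{\phi}_{12}^E$, I obtain a homogeneous $4\times4$ linear system in $(\bm{j}_{12}^E\cdot\bm{n}_1,a_2,a_3,a_4)$ whose coefficients are the $\lambda_k$, the parallel cross products, and the values $B_2^\varepsilon(\sigma_s,\sigma_t)$. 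The crux is to show this system is nonsingular: using coplanarity to express every $\bm{l}_s\times\bm{l}_t$ through the single cross product $\bm{l}_3\times\bm{l}_4$, the determinant reduces to a polynomial in the $\lambda_k$ and the Bernoulli values that is of one sign by the positivity of the 2D-Bernoulli function, exactly the mechanism already exploited in Lemma \ref{lemma:curlunisolve}. Nonsingularity forces $a_2=a_3=a_4=0$ (and incidentally $\bm{j}_{12}^E\cdot\bm{n}_1=0$), which is \eqref{eq:curl-conforming-Fs}; points on $\partial F_1$ are then covered by continuity in $\bm{x}$. I expect this determinant positivity to be the main obstacle, as it is the only place a genuine computation is required.

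For the global consequence I would argue that the tangential trace of a basis function on any facet is \emph{facet-intrinsic}. Restricting \eqref{eq:algcurl} to the pairs within $\{2,3,4\}$ and adjoining the same constraint shows that, for an edge $E_{st}\subset F_1$, the tangential part of $\bm{\phi}_{st}^E$ on $F_1$ is determined by the same nonsingular $4\times4$ system with a right-hand side built only from the values $B_2^\varepsilon(\sigma_s,\sigma_t)$ and $B_2^\varepsilon(\sigma_{st},-\sigma_s)$; every $\sigma_s=\bm{\beta}\cdot\bm{l}_s$ and $\sigma_{st}=\bm{\beta}\cdot\bm{E}_{st}$ appearing here depends on $\bm{\beta}$ only through its tangential component on $F_1$. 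Thus on an interior facet $F$ shared by two elements, the basis functions attached to edges not lying in $F$ have vanishing tangential trace by \eqref{eq:curl-conforming-Fs}, while those attached to the three edges of $F$ have a tangential trace depending only on the facet geometry, the shared edge degrees of freedom \eqref{eq:curldof}, and the tangential component of $\bm{\beta}$ on $F$. When $\bm{\beta}$ is tangentially continuous across $F$ and piecewise $C^1$ (the latter guaranteeing that the construction is well defined and continuous inside each element), these data coincide from both sides, so the tangential trace is single-valued across $F$; this is precisely the $\bm{H}(\mathrm{curl})$-conformity of $\mathcal{S}_{1^-}^1(\mathcal{T}_h)$.
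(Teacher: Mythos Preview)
Your proposal is correct and follows essentially the same route as the paper. The paper also restricts to the three equations of \eqref{eq:algcurl} indexed by pairs inside the facet, then uses the barycentric relation $\sum_{k\ge 2}\lambda_k\bm{l}_k=\bm{0}$; the only cosmetic difference is that the paper first eliminates the flux by linear combinations of those three rows and then substitutes $\bm{l}_2$ to obtain a $2\times 2$ system in $(\bm{\phi}_{ij}^E\cdot\bm{l}_3,\bm{\phi}_{ij}^E\cdot\bm{l}_4)$, whereas you keep the flux component and the constraint as an explicit fourth row, yielding an equivalent $4\times 4$ system. The nonsingularity computation you flag as the main obstacle is precisely the $\det(\tilde A_1)$ computation carried out in the paper, and your facet-intrinsic argument for the global conformity matches the paper's as well.
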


\begin{proof}
	Without loss of generality, we study the behavior of basis $\bm{\phi}^E_{ij}~(1\leq i < j \leq 4)$ on the facet $F_1$, as depicted in Figure \ref{fig:curl-conform}. For $\bm{x} \in F_1$,  Problem~\ref{problem:curl} gives
	$$
	\left(\begin{matrix}
	({\bm{l}}_{2}\times{\bm{l}}_{3})^\top/2  
	&- B_2^\varepsilon(\sigma_{2},\sigma_{3}){\bm{l}}_{2}^\top+ B_2^\varepsilon(\sigma_{3},\sigma_{2}){\bm{l}}_{3}^\top \\
	({\bm{l}}_{2}\times{\bm{l}}_{4})^\top/2  
	&- B_2^\varepsilon(\sigma_{2},\sigma_{4}){\bm{l}}_{2}^\top+ B_2^\varepsilon(\sigma_{4},\sigma_{2}){\bm{l}}_{4}^\top \\
	({\bm{l}}_{3}\times{\bm{l}}_{4})^\top/2  
	&- B_2^\varepsilon(\sigma_{3},\sigma_{4}){\bm{l}}_{3}^\top+ B_2^\varepsilon(\sigma_{4},\sigma_{3}){\bm{l}}_{4}^\top 
	\end{matrix}\right)\left(\begin{matrix}
	\bm{j}_{ij}^E(\bm{x}) \\ \bm{\phi}_{ij}^E(\bm{x})
	\end{matrix}\right) = \left(\begin{matrix}
	 B_2^\varepsilon(\sigma_{23},-\sigma_2) \delta_{2i,3j}\\  B_2^\varepsilon(\sigma_{24},-\sigma_2)\delta_{2i,4j}\\ B_2^\varepsilon(\sigma_{34},-\sigma_3)\delta_{3i,4j}
	\end{matrix}\right).
	$$
	Observing that $\lambda_2+\lambda_3+\lambda_4=1$ holds true for $\bm{x}\in F_1$, we can assume $\lambda_2>0$ to eliminate the coefficient of $\bm{j}_{ij}^E(\bm{x})$ through the equations $\bm{0}=(\lambda_2\bm{l}_2+\lambda_4\bm{l}_4)\times\bm{l}_3=(\lambda_2\bm{l}_2+\lambda_3\bm{l}_3)\times\bm{l}_4$. Consequently, we arrive at
	\begin{equation} \label{eq:curl-F1}
	A_{1}\bm{\phi}_{ij}^E(\bm{x}) 
	= b_1 :=
	\left(\begin{matrix}
	\lambda_2 B_2^\varepsilon(\sigma_{23},-\sigma_2) \delta_{2i,3j}-\lambda_4B_2^\varepsilon (\sigma_{34},-\sigma_3)\delta_{3i,4j}\\\lambda_2B_2^\varepsilon(\sigma_{24},-\sigma_2)\delta_{2i,4j}+\lambda_3B_2^\varepsilon(\sigma_{34},-\sigma_3)\delta_{3i,4j}
	\end{matrix}\right),
	\end{equation}
	where $A_1 \in \mathbb{R}^{2\times 3}$ is defined as
	$$
	A_1 := \left(
	\begin{matrix}
		\lambda_2(-B^{23}{\bm{l}}_{2}^\top + B^{32} {\bm{l}}_{3}^\top ) - 
		\lambda_4(-B^{34} {\bm{l}}_{3}^\top+B^{43} {\bm{l}}_{4}^\top) \\
		\lambda_2(-B^{24}{\bm{l}}_{2}^\top+B^{42}{\bm{l}}_{4}^\top ) + 
		\lambda_3(-B^{34}{\bm{l}}_{3}^\top+B^{43}{\bm{l}}_{4}^\top )
	\end{matrix}\right).
	$$
	Once again, we utilize the conventional simplified notation $B^{ij} :=B_2^\varepsilon(\sigma_i,\sigma_j)$.
	Subsequently, by substituting $\bm{l}_2$ with $\bm{l}_3$ and $\bm{l}_4$ via $\sum_{i=2}^4\lambda_i\bm{l}_i=\bm{0}$, and taking into consideration the linear independence of $\bm{l}_3$ and $\bm{l}_4$ due to $\lambda_2>0$, we can reformulate \eqref{eq:curl-F1} as follows:
	\begin{equation}\label{eq:curl-F1-2}
	\tilde{A}_{1} \left(\begin{matrix}
		\bm{\phi}^E_{ij}(\bm{x}) \cdot\bm{l}_3 \\ 
		\bm{\phi}^E_{ij}(\bm{x}) \cdot\bm{l}_4
	\end{matrix}\right) = b_1,
	\end{equation}
	where 
	$$
	\tilde{A}_{1}=\left(\begin{matrix}
		\lambda_3B^{23}+\lambda_2B^{32}+\lambda_4B^{34} &\lambda_4B^{23}-\lambda_4B^{43}\\
		\lambda_3B^{24}-\lambda_3B^{34} & \lambda_4B^{24}+\lambda_2B^{42}+\lambda_3B^{43}
	\end{matrix}\right).
	$$
	A direct computation, coupled with the positivity of the Bernoulli function, also establishes the non-singularity of $\tilde{A}_1$. In cases where the edge $E_{ij}$ is not encompassed within the facet $F_1$, the term $b_1$ in \eqref{eq:curl-F1} becomes trivial. Consequently, we deduce that $\bm{\phi}_{ij}^E(\bm{x}) \cdot \bm{l}_{s}=0~(s=3,4)$, thereby leading to $\bm{\phi}_{ij}^E(\bm{x}) \times\bm{n}_1= \bm{0}$, resulting in \eqref{eq:curl-conforming-Fs}.
	
Conversely, if edge $E_{ij}$ is contained in facet $F_1$, we consider $T^+$ and $T^-$ that share $F_1$ as a common facet.
Note that $\tilde{A}_1$ and $b_1$ depends only on $\bm{l}_s$ and $\sigma_s = \bm{\beta}(\bm{x})\cdot \bm{l}_s~(s=2,3,4)$, which is continuous across the facet when the tangential component of $\bm{\beta}$ on $F_1$ is continuous. Then, the linear systems \eqref{eq:curl-F1-2} are the same for $T^\pm$.
Therefore, $\bm{\phi}_{ij}^E\cdot \bm{l}_s (s=3,4)$ is continuous across the facet in this case, which also means that $\bm{\phi}_{ij}^E\times \bm{n}_1$ is continuous across the facet. Above all, we obtain the $\bm{H}(\mathrm{curl})$-conformity.
\end{proof}

\section{Exponentially-fitted finite element space in $H({\rm div})$}\label{sec:div}
In this section, we will construct the exponentially-fitted FE space in $\bm{H}(\mathrm{div})$.  In the construction, we once again rely on the assumption that $\bm{\theta}$ is a constant field, allowing us to invoke \eqref{eq:identity}
$$
J_{\bm{\theta}}^2 \bm{u} = \varepsilon(\nabla \cdot \bm{u} + \bm{\theta} \cdot \bm{u}) = \varepsilon E_{\bm{\theta}}^{-1} \nabla \cdot (E_{\bm{\theta}} \bm{u}).
$$ 
By utilizing equation \eqref{eq:div-cd}, we can express $\mathcal{L}^2\bm{u} = -\nabla(J_{\bm{\theta}}^2 \bm{u})$.

\subsection{$\mathcal{L}^2$-spline shape function space}
Given an element $T$, we intend to construct the shape function space as defined below:
\begin{equation} \label{eq:div-shape}
\mathcal{S}_{1^-}^2(T):={\rm{span}} \{\bm{\phi}^F_i:~ 1\leq i \leq 4\},
\end{equation}
where $\bm{\phi}_{i}^F$ is the basis function associated with the facet $F_{i}$, which we will define explicitly later. Continuing with a logic similar to our previous approach, we will provide both $\bm{\phi}_i^F(\bm{x})$ and its corresponding flux $j_i^F(\bm{x})$ concurrently for each $\bm{x} \in \bar{T}$.

For a fixed point $\bm{x} \in \bar{T}$, we adopt a geometric convention (illustrated in Figure \ref{fig:sub-tetrahedron}). We proceed to appropriately extend $\bm{\phi}_{i}^F(\bm{x})$ and $j_{i}^F(\bm{x})$, denoted by $\widetilde{\bm{\phi}}_{i}^F(\bm{y})$ and $\widetilde{j}_{i}^F(\bm{y})$, onto sub-tetrahedrons $T_j~(1\leq j \leq 4)$. We introduce two distinct strategies as the essential components in the construction of the shape function space.

\begin{strategy}[normal constant on sub-facets]\label{sg:divconstant}
	Given $\bm{x}\in \bar{T}$, the extension $\widetilde{\bm{\phi}}_i^F(\bm{y})$ satisfies
	$$
	\widetilde{\bm{\phi}}_{i}^F(\bm{y}) \cdot \bm{\nu}_{st} = {\bm{\phi}}_{i}^F(\bm{x}) \cdot \bm{\nu}_{st}, \quad \forall \bm{y} \in f_{st}, ~1\leq s < t \leq 4.
	$$
\end{strategy}
\begin{strategy}[$\mathcal{L}^2$-spline in sub-tetrahedrons]\label{sg:divrestrict}
	Given $\bm{x} \in \bar{T}$, $\widetilde{\bm{\phi}}_i^F(\bm{y})$ is an $\mathcal{L}^2$-spline, i.e., 
	$$
	\mathcal{L}^2 \widetilde{\bm{\phi}}^F_{i} = -\nabla \widetilde{j}_i^F = \bm{0}, \ \ \text{ in } T_{j},~ 1 \leq j \leq 4.
	$$
\end{strategy}
\begin{remark}[consistency with the lowest order Raviart-Thomans element]
	When the convection field $\bm{\beta}$ vanishes, then the exponential function $E_{\bm{\theta}}\equiv1$ and the operator $\mathcal{L}^2$ only has the diffusion component. As is well known that the normal component of lowest order Raviart-Thomas element space (denoted by $\mathcal{P}_{1^-}^2(T)$ in accordance with the naming convention used in this paper) is constant on any fixed plane, which gives the Strategy \ref{sg:divconstant} exactly. Furthermore, it is straightforward that $\mathcal{P}_{1^-}^2(T)$ is $\mathcal{L}^2$-free when the convection is absent. Therefore, these strategies are mimetic to the properties of $\mathcal{P}_{1^-}^2(T)$, which are important for us to establish the subsequent algebraic equations for the $\mathcal{L}^2$-spline basis.
\end{remark}

Without loss of generality, we will deduce $\bm{\phi}^F_{1}(\bm{x})$ and an auxiliary flux ${j}^F_{1}(\bm{x})$ using the aforementioned strategies. 
Utilizing Strategy \ref{sg:divrestrict}, we have 
\begin{equation}\label{eq:subdiv}
	\left\{\begin{aligned}
		 -\nabla \widetilde{j}^F_1(\bm{y}) 
		&=- \nabla \left( \varepsilon E_{\bm{\theta}}^{-1}(\bm{y}) \nabla\cdot (E_{\bm{\theta}}(\bm{y}) \widetilde{\bm{\phi}}_1^F(\bm{y})) \right)= \bm{0},  & \text{ in } T_j,\\
		 \widetilde{\bm{\phi}}^F_1(\bm{y}) \cdot \bm{n}_j &= \frac{\delta_{1j}}{|F_j|}, & \text{ on }F_j,
	\end{aligned}\right.
\end{equation}
for $1 \leq j \leq 4$. Here, the second equation emulates the properties of the lowest-order Raviart-Thomas element basis. It can be easily derived that  that $\widetilde{j}_1^F(\bm{y})$ is constant in the tetrahedron $T_j$, whence 
\begin{equation} \label{eq:div-const}
j^F_1(\bm{x}) = \varepsilon E^{-1}_{\bm{\theta}}(\bm{y}) \nabla\cdot  (E_{\bm{\theta}}(\bm{y})\widetilde{\bm{\phi}}_1^F(\bm{y})), \quad \forall \bm{y} \in T_j.
\end{equation}

By multiplying the equation mentioned above with $E_{\bm{\theta}}(\bm{y})$ and integrating it over $T_{j}$, we obtain
\begin{equation}\label{eq:divjphi}
	j_1^F(\bm{x})\int_{T_j}E_{\bm{\theta}} = \varepsilon\int_{T_j} \nabla\cdot (E_{\bm{\theta}}(\bm{y}) \widetilde{\bm{\phi}}^F_1(\bm{y})) = \varepsilon\int_{\partial T_j} E_{\bm{\theta}}(\bm{y}) \widetilde{\bm{\phi}}_1^F(\bm{y}) \cdot \bm{n},
\end{equation}
where $\bm{n}$ denotes the unit outward normal vector of the boundary of $T_j$. Using Strategy \ref{sg:divconstant}, we have
$$
\begin{aligned}
	&\quad \int_{\partial T_j}  E_{\bm{\theta}}(\bm{y}) \widetilde{\bm{\phi}}_1^F(\bm{y}) \cdot \bm{n}
	= \sum_{1 \leq s < t \leq 4, s,t\neq j} \int_{f_{st}}E_{\bm{\theta}}(\bm{y}) \widetilde{\bm{\phi}}_1^F(\bm{y})\cdot \bm{n}
	   + \frac{\delta_{1j}}{|F_j|} \int_{F_j}E_{\bm{\theta}}\\
	&=  \sum_{1 \leq s < t \leq 4, s,t\neq j} (\pm)_{jst} \bm{\phi}_1^F(\bm{x}) \cdot \bm{\nu}_{st}\int_{f_{st}}E_{\bm{\theta}} 
	  + \frac{\delta_{1j}}{|F_j|} \int_{F_j}E_{\bm{\theta}}, \\
	&=  \sum_{1 \leq s < t \leq 4, s,t\neq j} (\pm)_{jst} \bm{\phi}_1^F(\bm{x}) \cdot \frac{\bm{l}_s \times \bm{l}_t}{2}\dashint_{f_{st}}E_{\bm{\theta}} 
	  + \delta_{1j} \dashint_{F_j}E_{\bm{\theta}},
\end{aligned}
$$
where $(\pm)_{jst}$ represents the sign of orientation along with the unit outward normal, or more precisely, $(\pm)_{jst} := \mathrm{sgn}\big((\bm{x}_s - \bm{x}_j) \times (\bm{x}_t - \bm{x}_j) \cdot \bm{n}_{\widehat{jst}} \big)$ where $\widehat{jst}$ is the number ranging from 1 to 4, excluding $j$, $s$, and $t$. Now, utilizing the geometric interpretation of 3D-Bernoulli function \eqref{eq:bernoulli3d}, we  have
$$
\begin{aligned}
\varepsilon\frac{\dashint_{f_{st}}E_{\bm{\theta}}}{\dashint_{T_j}E_{\bm{\theta}}} &= B_3^\varepsilon(\sigma_s, \sigma_t, \sigma_{\widehat{jst}}),\\
\varepsilon\frac{\dashint_{F_j}E_{\bm{\theta}}}{\dashint_{T_j}E_{\bm{\theta}}} &= B_3^\varepsilon(\sigma_{pq},\sigma_{p\widehat{jpq}},-\sigma_p) \quad \text{ for any } 1\leq p < q \leq 4,~ p,q\neq j, 
\end{aligned}
$$
where we recall that $\sigma_s = \bm{\beta}\cdot \bm{l}_s$, $\sigma_{st} = {\bm{\beta}}\cdot \bm{E}_{st} = \sigma_t - \sigma_s$ from \eqref{eq:sigma-diff}.
Thus, taking any $1 \leq p < q \leq 4$ and $p,q \neq j$, the equation \eqref{eq:subdiv} can be rewritten as 
\begin{equation}\label{eq:algdiv}
	\begin{aligned}
	j_1^F(\bm{x})|T_j| &- \sum_{1 \leq s < t \leq 4, s,t\neq j} (\pm)_{jst}  B_{3}^\varepsilon(\sigma_s, \sigma_t, \sigma_{\widehat{jst}}) \bm{\phi}_1^F(\bm{x}) \cdot \frac{\bm{l}_s \times \bm{l}_t}{2} \\
	&=  B_3^\varepsilon(\sigma_{pq},\sigma_{p\widehat{jpq}},-\sigma_p)\delta_{1j}, \quad \forall 1\leq j \leq 4,
		\end{aligned}
\end{equation}
which defines a linear algebraic system for the unknowns ${\bm{\phi}}_{1}^F(\bm{x})$ and ${j}_{1}^F(\bm{x})$. Again, these extensions are solely employed for the purpose of deriving the algebraic equation.

It is evident that both the variables and the number of equations in \eqref{eq:algdiv} amount to four. The specific expressions depend on the labeling of vertices. In order to write a more concrete matrix form, we adopt the labeling shown in Figure \ref{fig:sub-tetrahedron}, and express $|T_j|$ using mixed product. This yields the following problem.

\begin{problem}[$\mathcal{L}^2$-spline]\label{problem:div}
	Find $\bm{\phi}_1^F(\bm{x})$ and $j_1^F(\bm{x})$ such that for all $\bm{x}\in \bar{T}$
	\begin{equation}
		\label{eq:sysdiv}
		D^F(\bm{x})\left(\begin{matrix}
			j_1^F(\bm{x})\\\bm{\phi}_1^F(\bm{x})
		\end{matrix}\right)= B_3^\varepsilon(\sigma_{23},\sigma_{24},-\sigma_2)\bm{e}_1^F,
	\end{equation}
	where $\bm{e}_1^F=(1,0,0,0)$ and $D^F(\bm{x})$ is a $4\times 4$ matrix defined by
	\begin{equation}
		\begin{aligned}
	&\quad D^F(\bm{x}) :=\\& \left(\begin{matrix}
			\frac{\bm{l}_2\times\bm{l}_3\cdot\bm{l}_4}{6} &(B_3^\varepsilon(\sigma_2,\sigma_3,\sigma_4)\frac{\bm{l}_2\times\bm{l}_3}{2}+ B_3^\varepsilon(\sigma_3,\sigma_4,\sigma_2)\frac{\bm{l}_3\times\bm{l}_4}{2} + B_3^\varepsilon(\sigma_4,\sigma_2,\sigma_3)\frac{\bm{l}_4\times\bm{l}_2}{2})^\top\\
			\frac{\bm{l}_3\times\bm{l}_1\cdot\bm{l}_4}{6} &( B_3^\varepsilon(\sigma_3,\sigma_1,\sigma_4)\frac{\bm{l}_3\times\bm{l}_1}{2}+ B_3^\varepsilon(\sigma_1,\sigma_4,\sigma_3)\frac{\bm{l}_1\times\bm{l}_4}{2}+ B_3^\varepsilon(\sigma_4,\sigma_3,\sigma_1)\frac{\bm{l}_4\times\bm{l}_3}{2})^\top\\
			\frac{\bm{l}_4\times\bm{l}_1\cdot\bm{l}_2}{6} &(B_3^\varepsilon(\sigma_4,\sigma_1,\sigma_2)\frac{\bm{l}_4\times\bm{l}_1}{2}+ B_3^\varepsilon(\sigma_1,\sigma_2,\sigma_4)\frac{\bm{l}_1\times\bm{l}_2}{2}+ B_3^\varepsilon(\sigma_2,\sigma_4,\sigma_1)\frac{\bm{l}_2\times\bm{l}_4)}{2})^\top\\
			\frac{\bm{l}_1\times\bm{l}_3\cdot\bm{l}_2}{6} &( B_3^\varepsilon(\sigma_1,\sigma_3,\sigma_2)\frac{\bm{l}_1\times\bm{l}_3}{2}+ B_3^\varepsilon(\sigma_3,\sigma_2,\sigma_1)\frac{\bm{l}_3\times\bm{l}_2}{2}+ B_3^\varepsilon(\sigma_2,\sigma_1,\sigma_3)\frac{\bm{l}_2\times\bm{l}_1}{2})^\top
		\end{matrix}\right).
				\end{aligned}
	\end{equation}
\end{problem}

The solution to Problem~\ref{problem:div} (whose later proof establishes its uniqueness or existence) defines the point values of the spline basis function $\bm{\phi}_{1}^F$ and the auxiliary flux ${j}_{1}^F$ at $\bm{x} \in \bar{T}$. A similar process is carried out for $\bm{\phi}_{i}^F$ and ${j}_{i}^F~(1\leq i \leq 4)$ associated with the facet $F_{i}$. This concludes the construction of the shape function space given by \eqref{eq:div-shape}.

\begin{remark}[variable $\bm{\beta}$ and local smoothness of $\mathcal{S}_{1^-}^2(T)$]
	Similarly, given $\bm{x}\in\bar{T}$, we consider using the  value $\bm{\beta}(\bm{x})$ to evaluate the corresponding $\sigma_s$ and $\sigma_{st}$ for constructing the matrix $D^F(\bm{x})$ and right side term. In addition, if $\bm{\beta}$ is $C^1$ in the element $T$, then spline basis is $C^1$ within the element.
\end{remark}

\begin{remark}[approximation property of $\mathcal{S}_{1^-}^2$] \label{rm:approx-div}
	It is straightforward that any constant vector $\bm{c}$ is contained in $\mathcal{S}_{1^-}^2(T)$ since the constant vector $\bm{c}$ and the corresponding auxiliary flux $\bm{\beta}(\bm{x})\cdot\bm{c}$ satisfy all the Strategies in the construction.
\end{remark}

\subsection{Well-posedness of $\mathcal{S}_{1^-}^2(T)$}
We will show that Problem~\ref{problem:div} is well-posed, i.e., uniquely solvable for all $\bm{x}\in\bar{T}$.
\begin{lemma}[well-posedness of Problem \ref{problem:div}] \label{lm:divunisolve}
For any $\bm{x}\in \bar{T}$, there exists a unique solution to Problem~\ref{problem:div}.
\end{lemma}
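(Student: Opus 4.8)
The plan is to establish well-posedness exactly as in the proof of Lemma~\ref{lemma:curlunisolve}, namely by showing that $\det(D^F(\bm{x})) \neq 0$ (in fact $>0$) for every $\bm{x} \in \bar{T}$. The first observation is that the four entries of the first column of $D^F(\bm{x})$ are precisely the signed volumes of the sub-tetrahedra $T_j$, and by the barycentric identity these equal $\lambda_j|T|$, all carrying the same sign consistent with the vertex labeling of Figure~\ref{fig:sub-tetrahedron} (one checks on a reference configuration that no sign alternation occurs). Thus the first column is $|T|\,(\lambda_1,\lambda_2,\lambda_3,\lambda_4)^\top$. Writing the $j$-th row of the second block as $\bm{w}_j^\top$ — the Bernoulli-weighted combination of the cross products $\bm{l}_s\times\bm{l}_t$ read off from \eqref{eq:sysdiv} — reduces the claim to a determinant whose first column is proportional to the barycentric vector.

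Since $\bm{x}\in\bar{T}$ and $\sum_i\lambda_i=1$, some barycentric coordinate is positive, and by relabeling I may assume $\lambda_1>0$; this guarantees that $\bm{l}_2,\bm{l}_3,\bm{l}_4$ are linearly independent. I then subtract $\tfrac{\lambda_j}{\lambda_1}$ times the first row from the $j$-th row for $j=2,3,4$ to annihilate the first column in those rows, and expand along the first column to obtain
\begin{equation*}
\det(D^F(\bm{x})) = \lambda_1 |T|\,\det\!\big[\,\bm{w}_j - \tfrac{\lambda_j}{\lambda_1}\bm{w}_1\,\big]_{j=2,3,4},
\end{equation*}
which is the scalar triple product of the three vectors $\bm{w}_j-\tfrac{\lambda_j}{\lambda_1}\bm{w}_1$. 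Using $\lambda_1\bm{l}_1=-\sum_{i\ge 2}\lambda_i\bm{l}_i$ from \eqref{eq:identityli}, I rewrite every cross product occurring in $\lambda_1\bm{w}_j-\lambda_j\bm{w}_1$ in the basis $\{\bm{l}_2\times\bm{l}_3,\ \bm{l}_3\times\bm{l}_4,\ \bm{l}_4\times\bm{l}_2\}$. The triple product then factors as a $3\times 3$ coefficient determinant $\det(\hat{A})$ in the Bernoulli values and the $\lambda_i$, times $(\bm{l}_2\times\bm{l}_3)\cdot\big((\bm{l}_3\times\bm{l}_4)\times(\bm{l}_4\times\bm{l}_2)\big)=(\bm{l}_2\times\bm{l}_3\cdot\bm{l}_4)^2=(6\lambda_1|T|)^2$. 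After the powers of $\lambda_1$ cancel, this leaves $\det(D^F(\bm{x}))=36\,|T|^3\,\det(\hat{A})$, so everything reduces to proving $\det(\hat{A})>0$.

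The \emph{main obstacle} is this last step, the explicit evaluation of $\det(\hat{A})$, which is the exact analogue of the computation of $\det(\tilde{A})$ in Lemma~\ref{lemma:curlunisolve}, but now with the 3D-Bernoulli functions $B_3^\varepsilon$ and the orientation signs $(\pm)_{jst}$ from \eqref{eq:algdiv} threaded through each coefficient. I expect $\det(\hat{A})$ to expand into a sum of terms, each a product of three Bernoulli values $B_3^\varepsilon(\cdot,\cdot,\cdot)$ times a monomial $\lambda_r\lambda_s\lambda_t$, indexed by an admissible family of triples analogous to the sets $S^E_m$. The crux is the bookkeeping: confirming that all cancellations leave only nonnegative coefficients and that at least one strictly positive term survives (one carrying solely positive Bernoulli factors together with a positive power of $\lambda_1$). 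Granting this, the positivity of $B_3^\varepsilon$ (each Bernoulli function maps into $\mathbb{R}^+$) together with $\lambda_1>0$ forces $\det(\hat{A})>0$, hence $\det(D^F(\bm{x}))>0$ and Problem~\ref{problem:div} is uniquely solvable; as in the curl case, the argument nowhere requires $\bm{\beta}$ to be constant.
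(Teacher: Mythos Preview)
Your strategy is exactly the paper's: assume $\lambda_1>0$, use the barycentric identity to block-triangularize $D^F(\bm{x})$, pass to the basis $\{\bm{l}_2\times\bm{l}_3,\,\bm{l}_3\times\bm{l}_4,\,\bm{l}_4\times\bm{l}_2\}$, and reduce everything to the nonsingularity of a $3\times 3$ coefficient matrix (your $\hat{A}$, the paper's $\tilde{A}$). Your identification of the first column of $D^F$ as $|T|(\lambda_1,\lambda_2,\lambda_3,\lambda_4)^\top$ and the factorization $\det(D^F)=36|T|^3\det(\hat{A})$ are both correct.

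The gap is precisely where you write ``Granting this'': you have not computed $\det(\hat{A})$, and that computation is the entire content of the lemma. The paper carries it out in full, and it is not routine --- the raw expansion of $\tilde{A}$ produces terms of both signs, and one must exploit the symmetry $B_3^\varepsilon(\sigma_i,\sigma_j,\sigma_k)=B_3^\varepsilon(\sigma_j,\sigma_i,\sigma_k)$ to see that the negative contributions cancel in pairs. The surviving terms are organized via explicit index sets $S_l^F$ together with exception sets $Q_l$ (three excluded triples for each $l$), yielding
\[
-\det(\tilde{A})=\sum_{l=1}^4\sum_{S_l^F\setminus Q_l} B^{ijk}B^{rst}B^{mno}\lambda_k\lambda_t\lambda_o,
\]
every summand nonnegative; one then exhibits a single strictly positive term (for instance $((3,4,1),(2,4,1),(2,3,1))\in S_1^F\setminus Q_1$ contributes $B^{341}B^{241}B^{231}\lambda_1^3>0$). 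Without this explicit accounting your ``I expect'' remains a hope rather than a proof, and the very presence of the exception sets $Q_l$ shows that the favorable cancellation pattern is not automatic.
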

\begin{proof}
Without loss of generalization, we assume that $\lambda_1(\bm{x})> 0$. Using $\sum_{i=1}^4\lambda_i\equiv 1$ and thus $\sum_{i=1}^{4}\lambda_i\bm{l}_1=\bm{0}$, we can eliminate the lower-left block of matrix $D^F(\bm{x})$ to obtain 
	$$
	{\lambda_1^3}\det(D^F(\bm{x}))=\left|\begin{matrix}
		C &B\\
		{0} & {A}
	\end{matrix}\right|,
	$$
	where $C=\frac{\bm{l}_2\times\bm{l}_3\cdot\bm{l}_4}{6}
	=\lambda_1|T|>0$, $B=( B_3^\varepsilon(\sigma_2,\sigma_3,\sigma_4)\frac{\bm{l}_2\times\bm{l}_3}{2}+ B_3^\varepsilon(\sigma_3,\sigma_4,\sigma_2)\frac{\bm{l}_3\times\bm{l}_4}{2} + B_3^\varepsilon(\sigma_4,\sigma_2,\sigma_3)\frac{\bm{l}_4\times\bm{l}_2}{2})^\top$, ${A}=\lambda_1A_0-\Lambda B \in \mathbb{R}^{3\times 3}$ with $\Lambda=(\lambda_2,\lambda_3,\lambda_4)^\top$ and
	$$
	A_0 = \left(\begin{matrix}
		(B_3^\varepsilon(\sigma_3,\sigma_1,\sigma_4)\frac{\bm{l}_3\times\bm{l}_1}{2} + B_3^\varepsilon (\sigma_1,\sigma_4,\sigma_3)\frac{\bm{l}_1\times\bm{l}_4}{2} + B_3^\varepsilon(\sigma_4,\sigma_3,\sigma_1)\frac{\bm{l}_4\times\bm{l}_3}{2})^\top\\
		 (B_3^\varepsilon(\sigma_4,\sigma_1,\sigma_2)\frac{\bm{l}_4\times\bm{l}_1}{2} + B_3^\varepsilon(\sigma_1,\sigma_2,\sigma_4)\frac{\bm{l}_1\times\bm{l}_2}{2} + B_3^\varepsilon(\sigma_2,\sigma_4,\sigma_1)\frac{\bm{l}_2\times\bm{l}_4}{2})^\top\\
		(B_3^\varepsilon(\sigma_1,\sigma_3,\sigma_2)\frac{\bm{l}_1\times\bm{l}_3}{2} + B_3^\varepsilon(\sigma_3,\sigma_2,\sigma_1)\frac{\bm{l}_3\times\bm{l}_2}{2} + B_3^\varepsilon(\sigma_2,\sigma_1,\sigma_3)\frac{\bm{l}_2\times\bm{l}_1}{2})^\top
	\end{matrix}\right).
	$$ 
	Next, we need to demonstrate the non-singularity of ${A}$. By substituting $\lambda_1\bm{l}_1=-\sum_{i=2}^{4}\lambda_i\bm{l}_i$, we derive
	$$
	A = {\tilde{A}}
	\left(\begin{matrix}
		 (\bm{l}_2\times \bm{l}_3)^\top\\ (\bm{l}_3\times \bm{l}_4)^\top\\(\bm{l}_4\times \bm{l}_2)^\top
	\end{matrix}\right),
	$$
	where
	$$\tiny {\tilde{A}}=-\left(\begin{matrix}
		-\lambda_2B^{314} &\lambda_4B^{314}+\lambda_1B^{431}+\lambda_3B^{143} & -\lambda_2B^{143}\\
		-\lambda_3B^{124} & -\lambda_3B^{412} & \lambda_2B^{412}+\lambda_4B^{124}+\lambda_1B^{241}\\
		\lambda_2B^{132}+\lambda_1B^{321}+\lambda_3B^{213} & -\lambda_4B^{132} &-\lambda_4B^{213}
	\end{matrix}\right)-\Lambda \tilde{B},
	$$
	with $B^{ijk}$ denotes $B_3^\varepsilon(\sigma_i,\sigma_j,\sigma_k)$, and $\tilde{B}=(B^{234},B^{342},B^{423})$. Observe that $\bm{l}_2\times \bm{l}_3,\bm{l}_3\times\bm{l}_4,\bm{l}_4\times\bm{l}_2$ are linearly independent when $\lambda_1>0$. To establish the non-singularity of ${A}$, it suffices to demonstrate the non-singularity of $\tilde{{A}}$.

	A direct computation shows that the determinant of $\tilde{A}$ satisfies
	$$
	\begin{aligned}
		-\det(\tilde{A})&=\sum_{l=1}^4
		\sum_{((i,j,k),(r,s,t),(m,n,o))\in S_l^F\backslash Q_l}
		 B^{ijk}B^{rst}B^{mno}\lambda_k\lambda_t\lambda_o,
	\end{aligned}
	$$
	where 
	$$
	\begin{aligned}
		S^F_l=\{\big((i,j,k),(r,s,t),(m,n,o)\big)\in O_x \times O_y\times O_z|&1\le x<y<z\le 4,\\ &~l\notin \{x,y,z\},l\in\{k,t,o\}\},
	\end{aligned}
	$$
	with
	$$
	\small
	\begin{aligned}
		&O_l=\{(i,j,k)|1\le i,j,k\le 4,i< j,i\neq k,j\neq k \text{ and }l\notin \{i,j,k\}\},\\
		&Q_1=\{\big((3,4,1),(1,2,4),(1,2,3)\big),\big((1,3,4),(2,4,1),(1,3,2)\big),\big((1,4,3),(1,4,2),(2,3,1)\big)\},\\
		&Q_2=\{\big((3,4,2),(1,2,3),(1,2,4)\big),\big((2,4,3),(1,3,2),(2,4,1)\big),\big((2,3,4),(2,3,1),(1,4,2)\big)\},\\
		&Q_3=\{\big((2,4,3),(1,3,4),(1,3,2)\big),\big((2,3,4),(1,4,3),(2,3,1)\big),\big((3,4,2),(3,4,1),(1,2,3)\big)\},\\
		&Q_4=\{\big((1,2,4),(3,4,2),(3,4,1)\big),\big((2,4,1),(2,4,3),(1,3,4)\big),\big((1,4,2),(2,3,4),(1,4,3)\big)\}.
	\end{aligned}
	$$ 
	Here, we leverage the symmetry property $B^{ijk}=B^{jik}$, which arises from the definition of the 3D-Bernoulli function \eqref{eq:bernoulli3d}. Combining this with the positivity of the Bernoulli function and the assumption $\lambda_1>0$, it becomes evident that ${\tilde{A}}$ is non-singular. Consequently, both the matrix ${A}$ and $D^F(\bm{x})$ are also non-singular.
	\end{proof}

We provide visualizations of the $\mathcal{S}_{1^-}^2(T)$ basis function on a reference element. In Figure \ref{fig:divbasis1}, we present plots illustrating the second component of the $\bm{H}({\rm div})$ exponentially-fitted basis function, accompanied by a vector plot for different diffusion coefficients: $\varepsilon=1$ and $\varepsilon=0.01$, in the scenario where $\bm{\beta}=(1,2,3)^T$. Notably, the $\bm{H}({\rm div})$ exponentially-fitted basis function also showcases a behavior akin to a linear-like basis for larger $\varepsilon$, while adopting an exponential-like structure for smaller $\varepsilon$. This once again  indicates the flexibility of the function in accommodating both convection-dominated and diffusion-dominated cases. 

\begin{figure}[!htbp]
	\centering
	\subfloat[contour of the second component $(\varepsilon=1)$]{\includegraphics[width=.4\textwidth]{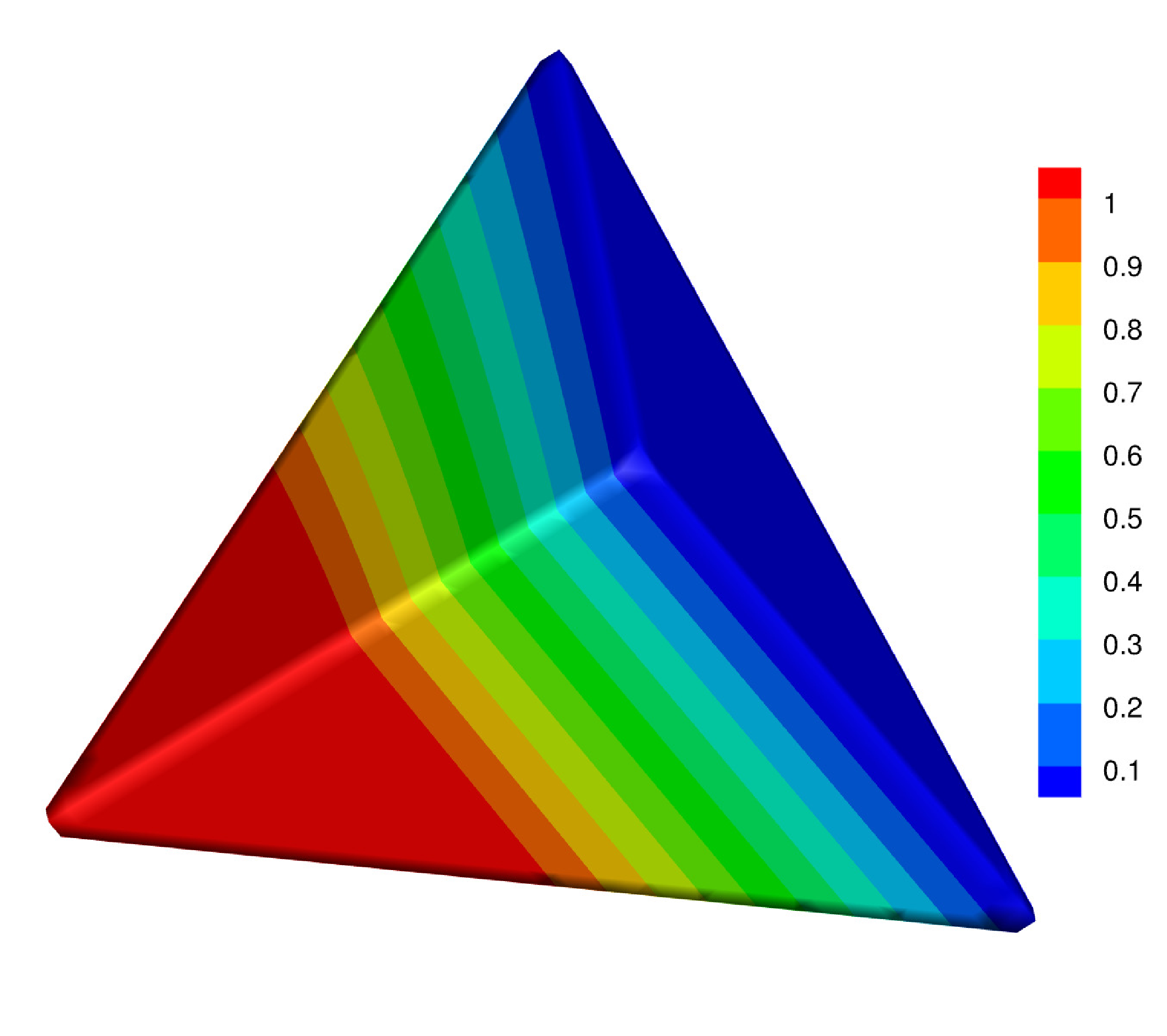}}
	\subfloat[vector plot $(\varepsilon=1)$]{\includegraphics[width=.42\textwidth]{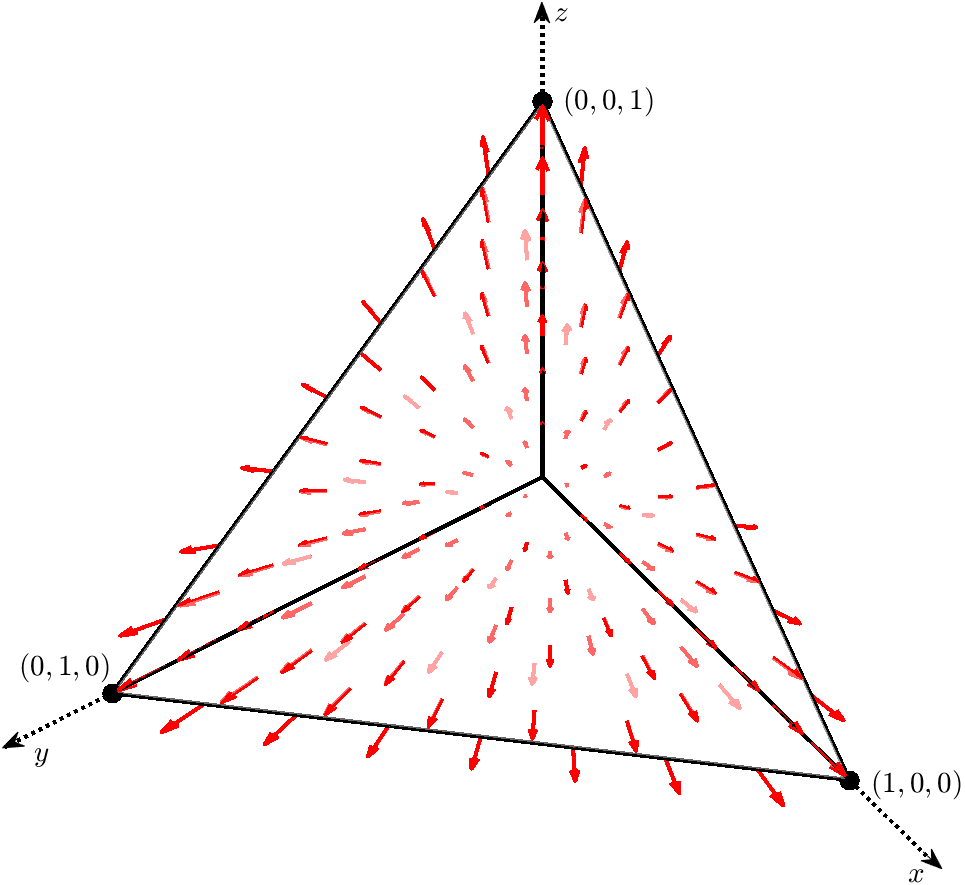}}\\
	\subfloat[contour of the second component $(\varepsilon=0.01)$]{\includegraphics[width=.42\textwidth]{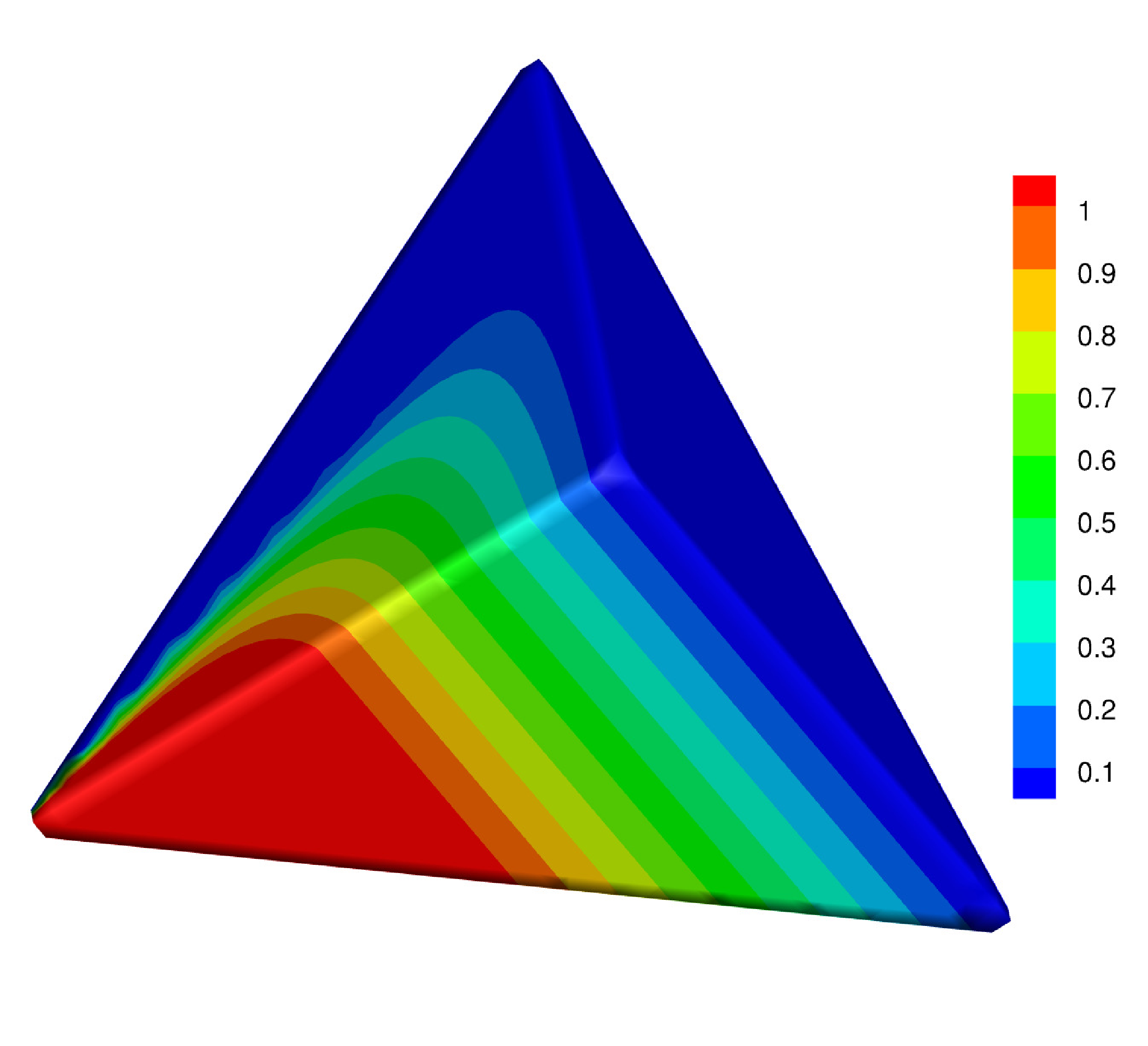}}
	\subfloat[vector plot $(\varepsilon=0.01)$]{\includegraphics[width=.4\textwidth]{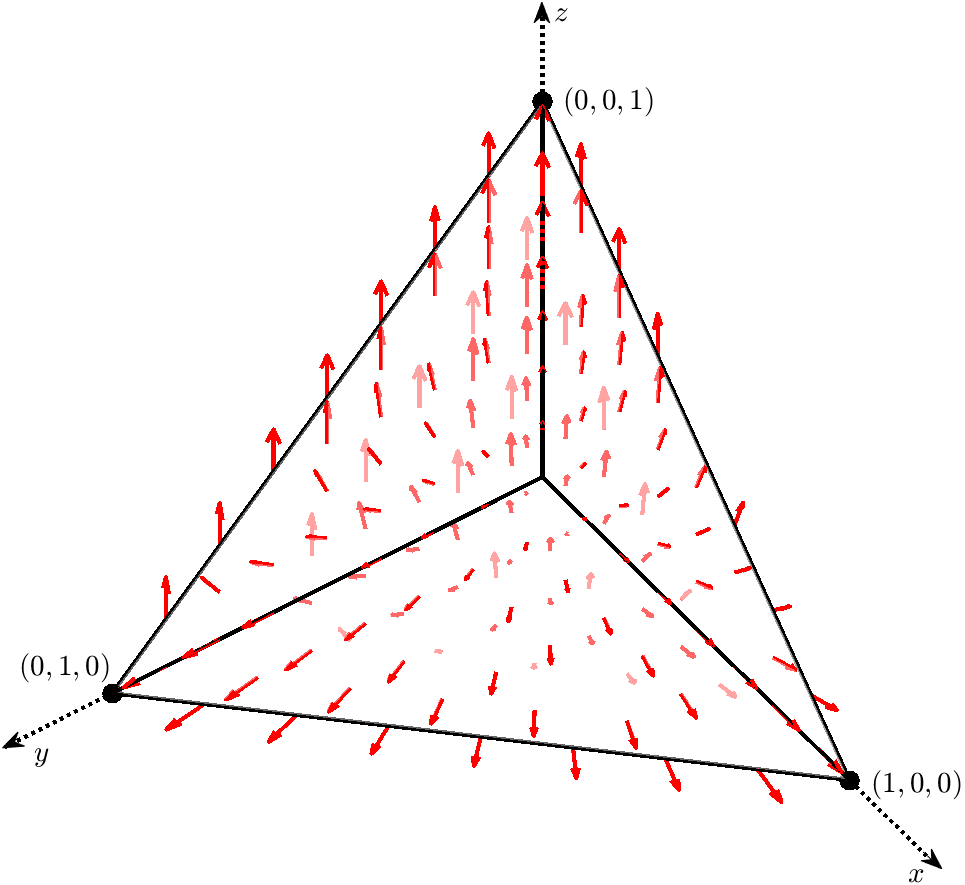}}
	\caption{Visualization of $\bm{H}({\rm div})$ exponentially-fitted basis on a reference element: contour of the second component (left) and vector plot (right) for cases dominated by diffusion (top) and convection (bottom).}\label{fig:divbasis1}
\end{figure}

\subsection{Properties of $\mathcal{S}_{1^-}^2$}
For Problem~\ref{problem:div}, if we multiply the row associated with $F_j$ by $\dashint_{T_j} E_{\bm{\theta}}$ respectively, then use the geometric interpretation of 3D-Bernoulli function and sum up all the rows together, we obtain the following corollary.
\begin{corollary}[constant flux of $\mathcal{S}_{1^-}^2$]\label{coro:divflux}
It holds that for any $\bm{x} \in \bar{T}$,
$$
j_i^F(\bm{x})\int_TE_{\bm{\theta}} = \varepsilon \dashint_{F_i} E_{\bm{\theta}}, \quad 1\leq i \leq 4,
$$
which implies $j_i^F(\bm{x})$ is constant in $T$ when $\bm{\beta}$ is constant.
\end{corollary}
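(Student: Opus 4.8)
The plan is to undo the normalization that produced the algebraic system \eqref{eq:algdiv} and then exploit a telescoping cancellation of the interior sub-facet fluxes. Fix the facet index $i$ (take $i=1$ without loss of generality) and recall that the $j$-th row of \eqref{eq:algdiv} was obtained from the integral identity \eqref{eq:divjphi} by dividing through by $\dashint_{T_j}E_{\bm{\theta}}$ and invoking the geometric interpretation of the 3D-Bernoulli function. I would therefore multiply the $j$-th row of \eqref{eq:algdiv} by $\dashint_{T_j}E_{\bm{\theta}}$ and apply, in reverse, the identities $B_3^\varepsilon(\sigma_s,\sigma_t,\sigma_{\widehat{jst}})\dashint_{T_j}E_{\bm{\theta}}=\varepsilon\dashint_{f_{st}}E_{\bm{\theta}}$ and $B_3^\varepsilon(\sigma_{pq},\sigma_{p\widehat{jpq}},-\sigma_p)\dashint_{T_j}E_{\bm{\theta}}=\varepsilon\dashint_{F_j}E_{\bm{\theta}}$, together with $|T_j|\dashint_{T_j}E_{\bm{\theta}}=\int_{T_j}E_{\bm{\theta}}$. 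This recovers, for each $j$, the integral-form identity
$$
j_1^F(\bm{x})\int_{T_j}E_{\bm{\theta}} - \varepsilon\sum_{1\le s<t\le 4,\, s,t\neq j}(\pm)_{jst}\,\Big(\dashint_{f_{st}}E_{\bm{\theta}}\Big)\,\bm{\phi}_1^F(\bm{x})\cdot\frac{\bm{l}_s\times\bm{l}_t}{2} = \varepsilon\,\delta_{1j}\dashint_{F_j}E_{\bm{\theta}}.
$$

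Summing this over $j=1,\dots,4$ is the heart of the argument. Since the four sub-tetrahedra $T_j$ tile $T$, the volume terms collapse: $\sum_j\int_{T_j}E_{\bm{\theta}}=\int_T E_{\bm{\theta}}$, and the scalar $j_1^F(\bm{x})$ factors out to give $j_1^F(\bm{x})\int_T E_{\bm{\theta}}$. The right-hand side collapses to $\varepsilon\dashint_{F_1}E_{\bm{\theta}}$ by the Kronecker delta. The crucial point is that the interior sub-facet sum vanishes: each $f_{st}$ is a face of exactly the two sub-tetrahedra $T_j$ with $j\notin\{s,t\}$, the common integrand $\big(\dashint_{f_{st}}E_{\bm{\theta}}\big)\bm{\phi}_1^F(\bm{x})\cdot\frac{\bm{l}_s\times\bm{l}_t}{2}$ is independent of which side we approach from, while the orientation sign $(\pm)_{jst}$ flips between the two, so the two contributions cancel. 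This is a discrete divergence theorem, and it is precisely where Strategy~\ref{sg:divconstant} (single-valued normal flux on each interior sub-facet) is indispensable.

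The surviving identity is $j_1^F(\bm{x})\int_T E_{\bm{\theta}}=\varepsilon\dashint_{F_1}E_{\bm{\theta}}$, and the identical computation for each $i$ yields the stated formula. For the concluding assertion, when $\bm{\beta}$ (hence $\bm{\theta}$) is constant, both $\int_T E_{\bm{\theta}}$ and $\dashint_{F_i}E_{\bm{\theta}}$ are fixed numbers depending only on the geometry of $T$ and on $\bm{\theta}$, not on the evaluation point $\bm{x}$; since $\int_T E_{\bm{\theta}}>0$, it follows that $j_i^F(\bm{x})=\varepsilon\dashint_{F_i}E_{\bm{\theta}}\big/\!\int_T E_{\bm{\theta}}$ is independent of $\bm{x}$, i.e., constant on $T$.

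I expect the main obstacle to be the careful bookkeeping of the orientation signs $(\pm)_{jst}$ needed to confirm the pairwise cancellation, namely verifying that the two sub-tetrahedra sharing $f_{st}$ induce opposite outward normals on it. This is geometrically transparent but must be tracked consistently through the convention $(\pm)_{jst}=\mathrm{sgn}\big((\bm{x}_s-\bm{x}_j)\times(\bm{x}_t-\bm{x}_j)\cdot\bm{n}_{\widehat{jst}}\big)$; everything else is routine once the tiling of $T$ and the Bernoulli reversal are in place.
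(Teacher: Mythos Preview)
Your proposal is correct and follows exactly the approach the paper sketches in the sentence preceding the corollary: multiply each row of Problem~\ref{problem:div} by $\dashint_{T_j}E_{\bm{\theta}}$, use the geometric interpretation of the 3D-Bernoulli function, and sum the rows. You have simply spelled out the telescoping cancellation of the interior sub-facet contributions that the paper leaves implicit in ``sum up all the rows together.''
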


Next, we establish an identical set of unisolvent degrees of freedom (DOFs) for $\mathcal{S}_{1^-}^2(T)$, matching those of the lowest-order Raviart-Thomas element. 
\begin{figure}[!htbp]
	\centering
	\includegraphics[width=.3\textwidth]{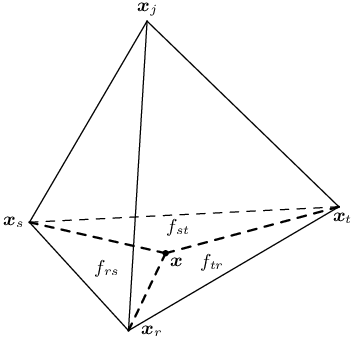}
	\caption{Visual representation in the proof of $\mathrm{div}$ DOFs.} \label{fig:div-dof}
	\label{fig:geo-notation}
\end{figure}

\begin{lemma}[DOFs of $\mathcal{S}_{1^-}^2(T)$]\label{lm:divdof}
	Let $T\in\mathcal{T}_h$, it holds that
	\begin{equation}\label{eq:divdof}
		{\bm{\phi}}_{i}^F(\bm{x}) \cdot {\bm{n}}_{j}= \frac{\delta_{ij}}{|F_{j}|}, \quad \forall \bm{x} \in F_{j},~ 1\leq j \leq 4,
	\end{equation}
	without necessitating that $\bm{\beta}$ is a constant vector.
\end{lemma}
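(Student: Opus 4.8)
The plan is to follow the template of the $\mathrm{curl}$ DOF result (Lemma~\ref{lemma:curldof}) almost verbatim, replacing the one-dimensional edge $E_{st}$ by the two-dimensional facet $F_j$, and the two-term $2$D-Bernoulli identity by a three-term $3$D one. By the symmetric construction, $\bm{\phi}_i^F$ satisfies the algebraic system \eqref{eq:algdiv} with the right-hand side $\delta_{1j}$ replaced by $\delta_{ij}$, so I fix $\bm{x}\in F_j$ and restrict attention to the single equation of \eqref{eq:algdiv} indexed by $j$ (the one attached to the sub-tetrahedron $T_j$ and facet $F_j$). The crucial geometric observation is that $\bm{x}\in F_j$ forces $\lambda_j(\bm{x})=0$; since the first-column entry of the $j$-th row of $D^F$ is exactly the signed volume of $T_j$, equal to $\lambda_j|T|$, the auxiliary-flux term $j_i^F(\bm{x})\,|T_j|$ drops out of that equation. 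This is the direct analogue of $\bm{l}_s\times\bm{l}_t=\bm{0}$ annihilating the flux term on an edge in the $\mathrm{curl}$ proof.

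Next I would exploit coplanarity. For $s,t\neq j$ the vertices $\bm{x}_s,\bm{x}_t$ lie on $F_j$, and since $\bm{x}\in F_j$ as well, the sub-facet $f_{st}$ lies entirely in the plane of $F_j$; consequently $\tfrac{\bm{l}_s\times\bm{l}_t}{2}=|f_{st}|\,\bm{\nu}_{st}$ is parallel to $\bm{n}_j$. Tracking the orientation sign $(\pm)_{jst}$ against the orientation of $\bm{l}_s\times\bm{l}_t$, one verifies the uniform relation $(\pm)_{jst}\,\tfrac{\bm{l}_s\times\bm{l}_t}{2}=-|f_{st}|\,\bm{n}_j$ across all three sub-facets. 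This lets me factor the scalar $\bm{\phi}_i^F(\bm{x})\cdot\bm{n}_j$ out of every term, so the surviving equation collapses to $\big(\bm{\phi}_i^F(\bm{x})\cdot\bm{n}_j\big)\sum_{s,t\neq j}B_3^\varepsilon(\sigma_s,\sigma_t,\sigma_{\widehat{jst}})\,|f_{st}|=B_3^\varepsilon(\sigma_{pq},\sigma_{p\widehat{jpq}},-\sigma_p)\,\delta_{ij}$.

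Finally I would evaluate the Bernoulli sum exactly as in the $\mathrm{curl}$ proof. Using the geometric interpretation of \eqref{eq:bernoulli3d} in the form $B_3^\varepsilon(\sigma_s,\sigma_t,\sigma_{\widehat{jst}})\,|f_{st}|=\varepsilon\,\tfrac{\int_{f_{st}}E_{\bm{\theta}}}{\dashint_{T_j}E_{\bm{\theta}}}$, and the fact that the three sub-facets $f_{st}$ ($s,t\neq j$) tile $F_j$ when $\bm{x}\in F_j$, so that $\sum_{s,t\neq j}\int_{f_{st}}E_{\bm{\theta}}=\int_{F_j}E_{\bm{\theta}}=|F_j|\dashint_{F_j}E_{\bm{\theta}}$, the sum telescopes to $|F_j|\,\varepsilon\,\tfrac{\dashint_{F_j}E_{\bm{\theta}}}{\dashint_{T_j}E_{\bm{\theta}}}=|F_j|\,B_3^\varepsilon(\sigma_{pq},\sigma_{p\widehat{jpq}},-\sigma_p)$; the formal factor $\dashint_{T_j}E_{\bm{\theta}}$ (whose simplex is degenerate here) simply cancels between numerator and right-hand side, just as $\dashint_{f_{st}}E_{\bm{\theta}}$ does in the $\mathrm{curl}$ argument. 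Since $B_3^\varepsilon>0$ by the positivity of the Bernoulli function, I may divide it out to obtain $\bm{\phi}_i^F(\bm{x})\cdot\bm{n}_j=\delta_{ij}/|F_j|$. Because the whole argument uses only $\bm{l}_s$ and $\sigma_s=\bm{\beta}(\bm{x})\cdot\bm{l}_s$ evaluated at the single point $\bm{x}$, no constancy of $\bm{\beta}$ is required.

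The step I expect to be the main obstacle is the orientation bookkeeping in the second paragraph: confirming that the signs $(\pm)_{jst}$ conspire with the coplanar cross products to yield one \emph{consistent} factor $-|f_{st}|\,\bm{n}_j$, so that the three sub-facet contributions reinforce rather than cancel and the final sign is exactly $+\delta_{ij}/|F_j|$ rather than its negative. A secondary point of care is the clean treatment of the degenerate $T_j$ when $\bm{x}\in F_j$: the geometric interpretation must be read as a ratio in which the ill-defined average $\dashint_{T_j}E_{\bm{\theta}}$ cancels, exactly mirroring how the $\mathrm{curl}$ proof legitimately uses $\dashint_{f_{st}}E_{\bm{\theta}}$ on a sub-facet that has collapsed onto an edge.
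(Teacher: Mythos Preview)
Your proposal is correct and follows essentially the same route as the paper: isolate the $j$-th row of the algebraic system on $F_j$, drop the flux term via $|T_j|=\lambda_j|T|=0$, use coplanarity to factor $\bm{\phi}_i^F(\bm{x})\cdot\bm{n}_j$ out of all three sub-facet terms, and collapse the Bernoulli sum with the geometric identity $\sum |f_{st}|B_3^\varepsilon(\cdot)=|F_j|B_3^\varepsilon(\sigma_{pq},\sigma_{p\widehat{jpq}},-\sigma_p)$. The paper dispatches your self-flagged orientation obstacle not by tracking $(\pm)_{jst}$ explicitly but by fixing a cyclic labeling $s,t,r$ of the vertices of $F_j$ aligned with $\bm{n}_j$, so that $\bm{l}_s\times\bm{l}_t=2|f_{st}|\bm{n}_j$, $\bm{l}_t\times\bm{l}_r=2|f_{tr}|\bm{n}_j$, $\bm{l}_r\times\bm{l}_s=2|f_{rs}|\bm{n}_j$ all carry the same sign; this absorbs the $(\pm)_{jst}$ and the leading minus of \eqref{eq:algdiv} in one stroke and is a bit cleaner than your plan.
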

\begin{proof}
As shown in Figure \ref{fig:div-dof}, we consider the facet $F_j$ formed by the vertices $\bm{x}_s$, $\bm{x}_t$, and $\bm{x}_r$, with an orientation aligned with the outward normal of $F_j$, i.e., $(\pm)_{str} = 1$. For any $\bm{x} \in F_j$, equation \eqref{eq:algdiv} for basis function $\bm{\phi}^F_i(\bm{x})$ turns out to be
	$$
	\begin{aligned}
	 \Big(B_3^\varepsilon(\sigma_s,\sigma_t,\sigma_r) \frac{\bm{l}_s \times \bm{l}_t}{2} 
	     &+ B_3^\varepsilon(\sigma_t,\sigma_r,\sigma_s) \frac{\bm{l}_t \times \bm{l}_r}{2} 
	     + B_3^\varepsilon(\sigma_r,\sigma_s,\sigma_t)\frac{\bm{l}_r\times\bm{l}_s}{2} \Big) \cdot\bm{\phi}_i^F(\bm{x}) \\
	 &= \delta_{ij}B_3^\varepsilon (\sigma_{st},\sigma_{sr},-\sigma_s).
	\end{aligned}
	$$
	Note that on the facet $F_j$, with the convention $f_{st} = f_{ts}$, it holds that 
	$$
	\bm{l}_s\times\bm{l}_t = 2|f_{st}|\bm{n}_j, \quad \bm{l}_t\times\bm{l}_r = 2|f_{tr}|\bm{n}_j, \quad \bm{l}_r\times\bm{l}_s = 2|f_{rs}|\bm{n}_j.
	$$
Then the above equation is
	$$
	\begin{aligned}
	\big( B_3^\varepsilon (\sigma_s,\sigma_t, \sigma_r)|f_{st}| &+ B_3^\varepsilon(\sigma_t,\sigma_r,\sigma_s)|f_{tr}|+B_3^{\varepsilon} (\sigma_r,\sigma_s,\sigma_t)|f_{rs}| \big)\bm{n}_j\cdot\bm{\phi}^F_i(\bm{x}) \\&= \delta_{ij}B_3^\varepsilon (\sigma_{st},\sigma_{sr},-\sigma_s).
	\end{aligned}
	$$
	Again, using the geometric interpretation of 3D-Bernoulli function, we have 
	$$
	\begin{aligned}
	& B_3^\varepsilon (\sigma_s,\sigma_t, \sigma_r)|f_{st}| + B_3^\varepsilon(\sigma_t,\sigma_r,\sigma_s)|f_{tr}|+B_3^{\varepsilon} (\sigma_r,\sigma_s,\sigma_t)|f_{rs}| \\
	= &~ \varepsilon \frac{\int_{f_{st}} E_{\bm{\theta}(\bm{x})} + \int_{f_{tr}} E_{\bm{\theta}(\bm{x})} + \int_{f_{rs}} E_{\bm{\theta}(\bm{x})}}{\dashint_{T_j} E_{\bm{\theta}(\bm{x})}} = \varepsilon \frac{\int_{F_j} E_{\bm{\theta}(\bm{x})}}{\dashint_{T_j} E_{\bm{\theta}(\bm{x})}} = |F_j|B_3^\varepsilon (\sigma_{st},\sigma_{sr},-\sigma_s),
        \end{aligned}
	$$
	the desired result \eqref{eq:divdof} then follows thanks to the positivity of Bernoulli function.
\end{proof}

So far, we have established the shape function space of $\mathcal{S}_{1^-}^2$ and the unisolvent DOFs, thus concluding the construction of the finite element space. For a given mesh $\mathcal{T}_h$, its finite element space is denoted by
\begin{equation} \label{eq:div-space}
\mathcal{S}_{1^-}^2(\mathcal{T}_h) := \{\bm{v}|_T \in \mathcal{S}_{1^-}^2(T),\forall T \in \mathcal{T}_h:~ \bm{v}\cdot \bm{n} \text{ is continuous across all facets} \}.
\end{equation}
From the above lemma, if $\bm{\beta}$ is tangential continuous across any facet $F \in \mathcal{F}_h^0$ and piecewise $C^1$, then the function space $\mathcal{S}_{1^-}^2(\mathcal{T}_h)$ is clearly $\bm{H}({\rm div})$-conforming.


\section{A commutative diagram} \label{sec:diagram}
In this section, we will introduce weighted interpolation operators $\Pi_{\bm{\theta},T}^k$ onto $\mathcal{S}_{1^-}^k(T)$ $(k=0,1,2,3)$ and discrete convection-diffusion differential operators $J_{\bm{\theta},T}^k$ $(k=0,1,2)$ on $\mathcal{S}_{1^-}^k(T)$ with $\bm{\theta}=\bm{\beta}/\varepsilon$. Here, in light of Corollary \ref{coro:divflux} (constant flux of $\mathcal{S}_{1^-}^2$), the space $\mathcal{S}_{1^-}^3(\mathcal{T}_h)$ is nothing but the piecewise constant function space with local basis function $\phi^T = \frac{1}{|T|}$.

Using these operators and function spaces, we intend to give the following local commutative diagram when $\bm{\beta}$ is constant.
\begin{equation} \label{eq:diagram}
	\begin{tikzcd}
		C^\infty(T) \arrow[r, "\bm{J}_{\bm{\theta}}^0"] 
		\arrow[d, swap, "{\Pi}_{\bm{\theta},T}^0"] 
		& C^\infty(T; \mathbb{R}^3) \arrow[r, "\bm{J}_{\bm{\theta}}^1"] 
		\arrow[d, swap, "\bm{\Pi}_{\bm{\theta},T}^1"] 
		& C^\infty(T;\mathbb{R}^3) \arrow[r,"J_{\bm{\theta}}^2"] 
		\arrow[d, swap, "\bm{\Pi}_{\bm{\theta},T}^2"] 
		& C^\infty(T)
		\arrow[d, swap, "{\Pi}_{\bm{\theta},T}^3"] \\ 
		\mathcal{S}_{1^-}^0(T) \arrow[r, "\bm{J}_{\bm{\theta},T}^0"] 
		& \mathcal{S}_{1^-}^1(T) \arrow[r, "\bm{J}_{\bm{\theta},T}^1"] 
		& \mathcal{S}_{1^-}^2(T) \arrow[r, "J_{\bm{\theta},T}^2"] 
		& \mathcal{S}_{1^-}^3(T) 
	\end{tikzcd}
\end{equation}

\subsection{Discrete flux and weighted interpolation}

Based on the previous introduction of finite elements, the degrees of freedom of $\mathcal{S}_{1^-}^k$ align seamlessly with those of $\mathcal{P}_{1^-}^k$. The corresponding basis functions are given in the construction of $\mathcal{S}_{1^-}^k$ in a pointwise fashion. To encapsulate, the local basis functions of $\mathcal{S}_{1^-}^k(T)$, associated with the sub-simplices of $T$, are represented by $\phi^V_i,\bm{\phi}^E_{ij},\bm{\phi}^F_{i}$, and $\phi^T$ respectively. The local degrees of freedom are denoted as follows:
$$ 
\begin{aligned}
	l_{j}^0(\phi_{i}^V) := \phi_{i}^V(x_j) = \delta_{ij}, &
	\qquad l_{st}^1(\bm{\phi}^E_{ij}) = \int_{E_{st}} \bm{\phi}_{ij}^E \cdot \bm{t}_{st} =
	\delta_{is,jt}, \\ 
	l_{j}^2(\bm{\phi}_{i}^F) = \int_{F_j} \bm{\phi}_{i}^F \cdot \bm{n}_{j} =
	\delta_{ij}, & 
	\qquad l^3_{T'}(\phi^{T}) = \int_{T'} \phi^{T} =\delta_{TT'}. 
\end{aligned}
$$ 

A distinctive feature in the construction of $\mathcal{S}_{1^-}^k$ is that the algebraic system simultaneously incorporates the values of the basis functions and their corresponding fluxes; Refer to Problems \ref{problem:grad}, \ref{problem:curl}, and \ref{problem:div} for cases $k=0,1,2$ respectively. In essence, the definition of discrete flux   below is intrinsic.
\begin{definition}[discrete flux]
The local discrete flux operators $J_{\bm{\theta},T}^k$ on $\mathcal{S}_{1^-}^k(T)$ are defined by
	\begin{equation}
		\label{eq:discretediff}
		\begin{aligned}
			\bm{J}_{\bm{\theta},T}^0v_h :& = \sum_{x_i\in\mathcal{V}_T} l_i^0(v_h) \bm{j}_i^V,\\
			\bm{J}_{\bm{\theta},T}^1\bm{v}_h :& =\sum_{E_{ij}\in\mathcal{E}_T} l_{ij}^1(\bm{v}_h) \bm{j}_{ij}^E,\\
			J_{\bm{\theta},T}^2\bm{v}_h :&= \sum_{F_i\in\mathcal{F}_T} l_i^2(\bm{v}_h){j}_i^F.
		\end{aligned}
	\end{equation}
The global discrete flux on the mesh $\mathcal{T}_h$ is defined by $(J_{\bm{\theta},h}^k v_h)|_T := J_{\bm{\theta},T}^k (v_h|_T)$, for all $T \in \mathcal{T}_h$.
\end{definition}
	
\begin{remark}[consistency when $\bm{\beta} = \bm{0}$]
Indeed, the discrete flux is not a direct analytical representation of the convection-diffusion derivative of the function within $\mathcal{S}_{1^-}^k$. However, it can be conceptualized as a form of weak convection-diffusion. Notably, in the specific case of $\bm{\beta}=0$, where $\mathcal{S}_{1^-}^k$ aligns with $\mathcal{P}_{1^-}^k$, the discrete flux $J_{\bm{0},T}^k v_h$ precisely corresponds to $\varepsilon d^k v_h$, as can be easily seen from \eqref{eq:grad-const}, \eqref{eq:curl-const}, \eqref{eq:div-const} for $k=0,1,2$ respectively.
\end{remark}

Next, we introduce a class of weighted interpolation operators.

\begin{definition}[weighted interpolation]
The local weighted interpolation operators $\Pi_{\bm{\theta},T}^k$ onto $\mathcal{S}_{1^-}^k(T)$ are defined by
\begin{equation} \label{eq:weightinterpolate}
	\begin{aligned}
		& \Pi_{\bm{\theta},T}^0v := \sum_{x_i\in\mathcal{V}_T}
 		\frac{l_i^0(E_{\bm{\theta}}v)}{E_{\bm{\theta}}(\bm{x}_i)}\phi_i^V, \quad
		 &\bm{\Pi}_{\bm{\theta},T}^1\bm{v} := \sum_{E_{ij}\in\mathcal{E}_T}
		\frac{l_{ij}^1(E_{\bm{\theta}}\bm{v})}{\dashint_{E_{ij}}E_{\bm{\theta}}}\bm{\phi}_{ij}^E,\\
		 &\bm{\Pi}_{\bm{\theta},T}^2\bm{v}:=\sum_{F_i\in\mathcal{F}_T}
 		\frac{l_i^2(E_{\bm{\theta}}\bm{v})}{\dashint_{F_i}E_{\bm{\theta}}}\bm{\phi}_i^F, \quad
		& \Pi_{\bm{\theta},T}^3v:= \frac{l_T^3(E_{\bm{\theta}}v)}{\dashint_T E_{\bm{\theta}}}\phi^T.
 			\end{aligned}
		\end{equation}
The global weighted interpolation on the mesh $\mathcal{T}_h$ is defined by $(\Pi_{\bm{\theta},h}^k v)|_T := \Pi_{\bm{\theta},T}^k (v|_T)$, for all $T \in \mathcal{T}_h$.
	\end{definition}
	These interpolations can be viewed as employing the integration weights $E_{\bm{\theta}}$ at each degree of freedom.
	Using the result in Lemma \ref{lemma:curldof} (DOFs of $\mathcal{S}_{1^-}^1(T)$) and
Lemma \ref{lm:divdof} (DOFs of $\mathcal{S}_{1^-}^2(T)$), it is straightforward that $\Pi_{\bm{\theta},T}^k|_{\mathcal{S}_{1^-}^k(T)}$ is the identity operator.

\subsection{Kernel preservation}
In this subsection, our aim is to demonstrate the preservation of the interpolation of the $\ker(J_{\bm{\theta}}^k)$ for the aforementioned discrete flux operator $J_{\bm{\theta},T}^k$ when $\bm{\theta}$ is constant within $T$. By employing the identity \eqref{eq:identity} which states that
$$
{J}^k_{\bm{\theta}}=\varepsilon E_{\bm{\theta}}^{-1}d^kE_{\bm{\theta}},
$$
we can deduce that the kernel of ${J}^{k}_{\bm{\theta}}$ can be expressed as $E_{\bm{\theta}}^{-1}\ker(d^k)$, while the characterization of $\ker(d^k)$ has been extensively studied.

\begin{lemma}[kernel preservation] \label{lm:kernel}
Assume $\bm{\theta}$ is constant in $T$. For any $w \in \ker({J}_{\bm{\theta}}^k)$, it holds that $\Pi_{\bm{\theta},T}^k{w} \in \ker(J_{\bm{\theta},T}^k)$, for $k=0,1,2$.
\end{lemma}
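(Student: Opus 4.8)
The plan is to treat $k=0,1,2$ by one common device and to isolate the genuinely hard case $k=1$. In every case I write $v_h=\Pi_{\bm\theta,T}^k w$; since the weighted interpolation produces a member of $\mathcal S_{1^-}^k(T)$ whose degrees of freedom are the $E_{\bm\theta}$-weighted degrees of freedom of $w$, the discrete flux $J_{\bm\theta,T}^k v_h$ is a linear combination $\sum_\sigma c_\sigma\,(\text{flux}_\sigma)$ of the auxiliary fluxes, with $c_\sigma$ equal to (the $E_{\bm\theta}$-weighted DOF of $w$ on the $k$-subsimplex $\sigma$) divided by (the exponential average on $\sigma$). By linearity the combined pair (combined flux, combined value) solves the very system satisfied by the basis functions (Problem~\ref{problem:grad}, \ref{problem:curl}, or \ref{problem:div}), only with a right-hand side assembled from the $c_\sigma$. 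The goal is to show that the \emph{flux} component of this solution vanishes. The key input is the equivalence, via \eqref{eq:identity}, between $w\in\ker(J_{\bm\theta}^k)$ and $E_{\bm\theta}w\in\ker(d^k)$, which I will convert into the vanishing of a boundary integral through the divergence theorem, Stokes' theorem, or the fundamental theorem of calculus.

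The case $k=2$ is immediate and needs no uniqueness argument: Corollary~\ref{coro:divflux} gives the auxiliary flux explicitly as the constant $j_i^F=\varepsilon(\dashint_{F_i}E_{\bm\theta})/\int_T E_{\bm\theta}$, so
$$
J_{\bm\theta,T}^2\bm\Pi_{\bm\theta,T}^2\bm w=\sum_i \frac{l_i^2(E_{\bm\theta}\bm w)}{\dashint_{F_i}E_{\bm\theta}}\,j_i^F=\frac{\varepsilon}{\int_T E_{\bm\theta}}\sum_i\int_{F_i}E_{\bm\theta}\bm w\cdot\bm n_i=\frac{\varepsilon}{\int_T E_{\bm\theta}}\int_{\partial T}E_{\bm\theta}\bm w\cdot\bm n,
$$
and the last integral equals $\int_T\nabla\cdot(E_{\bm\theta}\bm w)=0$ since $E_{\bm\theta}\bm w\in\ker(\nabla\cdot)$. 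The case $k=0$ is handled by an explicit guess: when $E_{\bm\theta}w\equiv c$, I will verify that $(\bm j,\psi)=(\bm 0,\,cE_{-\bm\theta}(\bm x))$ solves the combined system of Problem~\ref{problem:grad}, the only nontrivial point being the pointwise Bernoulli identity $B_1^\varepsilon(-\sigma_i)=e^{\sigma_i/\varepsilon}B_1^\varepsilon(\sigma_i)$ together with $E_{\bm\theta}(\bm x_i)/E_{\bm\theta}(\bm x)=e^{\sigma_i/\varepsilon}$; the well-posedness of Problem~\ref{problem:grad} then forces the flux to be $\bm 0$.

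The crux is $k=1$, where the flux is vector-valued and no single integration by parts closes the argument. Here $E_{\bm\theta}\bm w=\nabla p$ is curl-free, so for every sub-facet $\oint_{\partial f_{st}}E_{\bm\theta}\bm w\cdot\bm\tau=0$ (the line integral of a gradient around a closed loop). I will insert the geometric interpretations $\varepsilon\,\dashint_{l_s}E_{\bm\theta}/\dashint_{f_{st}}E_{\bm\theta}=B_2^\varepsilon(\sigma_s,\sigma_t)$, etc., into the $(s,t)$-row of Problem~\ref{problem:curl} for the combined pair $(\bm J,\bm\Phi)$ and use the closed-loop identity (which yields $\int_{E_{st}}E_{\bm\theta}\bm w\cdot\bm t_{st}=b_t-b_s$) to rewrite that row in the compact form
$$
\bm J\cdot\frac{\bm l_s\times\bm l_t}{2}\,\dashint_{f_{st}}E_{\bm\theta}=\varepsilon\big(\Delta_s-\Delta_t\big),\qquad \Delta_s:=(\bm\Phi\cdot\bm\tau_s)\!\int_{l_s}\!E_{\bm\theta}-\int_{l_s}\!E_{\bm\theta}\,\bm w\cdot\bm\tau_s .
$$
Thus the flux vanishes precisely when the four numbers $\Delta_s$ coincide. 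To produce a value $\bm\Phi^*\in\mathbb R^3$ making $\Delta_s\equiv\text{const}$, I will use that the map $\bm\Phi\mapsto(\bm\Phi\cdot\bm\tau_s)_{s=1}^4$ has image $\{(\mu_s):\sum_s\lambda_s|\bm l_s|\mu_s=0\}$, the single defining relation being exactly \eqref{eq:identityli}, $\sum_s\lambda_s\bm l_s=\bm 0$. Imposing $\Delta_s=\Delta$ then reduces to one scalar equation for $\Delta$ with strictly positive coefficient $\sum_s\lambda_s|\bm l_s|/\!\int_{l_s}E_{\bm\theta}$, hence solvable, after which $\bm\Phi^*$ is recovered. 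Consequently $(\bm 0,\bm\Phi^*)$ satisfies the full system of Problem~\ref{problem:curl}, and the well-posedness (Lemma~\ref{lemma:curlunisolve}) identifies it with the true solution, giving $\bm J=\bm 0$.

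The main obstacle is exactly this final step in the $k=1$ case: unlike $k=2$, where one global application of the divergence theorem suffices, one must show that the overdetermined reduced system (six scalar equations in the three unknowns $\bm\Phi^*$ once the flux is set to zero) is consistent, and the barycentric identity \eqref{eq:identityli} is what guarantees this — the same structural fact underlying the well-posedness proof. To keep the geometry clean I would first argue for $\bm x$ in the open tetrahedron (all $\lambda_i>0$, so any three of $\bm l_2,\bm l_3,\bm l_4$ are independent and the $\bm\tau_s$ carry exactly one linear dependence), and then extend to all $\bm x\in\bar T$ by continuity of both $\Pi_{\bm\theta,T}^k$ and $J_{\bm\theta,T}^k$.
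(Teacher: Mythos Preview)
Your argument is correct and, for $k=0,1$, genuinely different from the paper's. For $k=2$ both proofs are essentially the same (sum the rows, apply the divergence theorem on $T$). For $k=0$ and $k=1$ the paper proceeds by \emph{eliminating} the non-flux unknown: it forms linear combinations of the rows of Problems~\ref{problem:grad} and~\ref{problem:curl} that cancel $\Pi_{\bm\theta,T}^k w$, obtains a reduced square system for the flux alone, and then exhibits explicit auxiliary matrices $\mathcal K^V$, $\mathcal K^E$ whose determinants are manifestly positive, forcing the flux to vanish. You take the dual route: instead of showing the flux-only system is nonsingular, you \emph{produce} a solution with zero flux and invoke the already-established uniqueness (well-posedness of Problems~\ref{problem:grad} and~\ref{problem:curl}). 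For $k=0$ this is a one-line verification via $B_1^\varepsilon(-\sigma_i)=e^{\sigma_i/\varepsilon}B_1^\varepsilon(\sigma_i)$; for $k=1$ your observation that each rescaled row reads $\bm J\cdot\tfrac{\bm l_s\times\bm l_t}{2}\dashint_{f_{st}}E_{\bm\theta}=\varepsilon(\Delta_s-\Delta_t)$ reduces the consistency of a zero-flux ansatz to a single scalar equation in $\Delta$ with positive coefficient $\sum_s\lambda_s/\dashint_{l_s}E_{\bm\theta}$, whose solvability is immediate. Your approach trades the computation of two extra determinants for this solvability step; it is shorter and makes clearer why the barycentric identity \eqref{eq:identityli} is the structural reason the kernel is preserved. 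The only care needed---which you note---is that for $k=1$ the image characterization of $\bm\Phi\mapsto(\bm\Phi\cdot\bm\tau_s)_s$ requires three independent $\bm l_s$, hence the argument is first made for interior $\bm x$ and extended to $\bar T$ by continuity of the (smooth) basis functions and fluxes.
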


%

\begin{proof}[proof of Lemma \ref{lm:kernel} for $k=0$] Given that $w \in \ker(\bm{J}_{\bm{\theta}}^0)$, it follows that $w = E_{\bm{\theta}}^{-1} q$ for a constant $q$. Let $\bm{j}_w^V := \bm{J}_{\bm{\theta},T}^0 \Pi_{\bm{\theta},T}^0 w$, which, as implied by Problem \ref{problem:grad} and \eqref{eq:weightinterpolate}, satisfies the equation:
	$$
	D^V(\bm{x})
	\begin{pmatrix}
		\bm{j}^V_{{w}}(\bm{x})\\\Pi_{\bm{\theta},T}^0{w}(\bm{x}) 
	\end{pmatrix}
	= \begin{pmatrix}
		w(\bm{x}_1)B_1^\varepsilon (-\sigma_1)\\
		w(\bm{x}_2)B_1^\varepsilon (-\sigma_2)\\
		w(\bm{x}_3)B_1^\varepsilon (-\sigma_3)\\
		w(\bm{x}_4)B_1^\varepsilon (-\sigma_4)
	\end{pmatrix}.
	$$
	Considering the geometric interpretation of the 1D-Bernoulli function, it is shown that
	$$ 
	w(\bm{x}_i) B_1^\varepsilon(-\sigma_i) = w(\bm{x}_i) \varepsilon \frac{E_{\bm{\theta}}(\bm{x}_i)}{\dashint_{l_i} E_{\bm{\theta}}} = 
	\frac{\varepsilon q}{\dashint_{l_i} E_{\bm{\theta}}}.
	$$ 
	As a result, we can multiply each row corresponding to vertex $\bm{x}_{i}$ by $\dashint_{l_{i}}E_{\bm{\theta}}$, leading to the following linear system:
	$$
	\begin{pmatrix}
		\bm{l}_1^\top \dashint_{l_1}E_{\bm{\theta}} & \varepsilon E_{\bm{\theta}}(\bm{x})\\
		\bm{l}_2^\top \dashint_{l_2}E_{\bm{\theta}}& \varepsilon E_{\bm{\theta}}(\bm{x})\\
		\bm{l}_3^\top \dashint_{l_3}E_{\bm{\theta}}& \varepsilon E_{\bm{\theta}}(\bm{x})\\
		\bm{l}_4^\top \dashint_{l_4}E_{\bm{\theta}}& \varepsilon E_{\bm{\theta}}(\bm{x})\\
	\end{pmatrix}
	\begin{pmatrix}
	\bm{j}^V_{{w}}(\bm{x})\\\Pi_{\bm{\theta},T}^0{w}(\bm{x}) 
	\end{pmatrix}
	=  
	\begin{pmatrix}
	\varepsilon q \\ 
	\varepsilon q \\ 
	\varepsilon q \\ 
	\varepsilon q 
	\end{pmatrix}.
	$$
	This implies that $\bm{j}_w^V(\bm{x}) \cdot \bm{l}_i \dashint_{l_i} E_{\bm{\theta}}~(1\leq i \leq 4)$ are the same. Without loss of generality, we may assume $\lambda_1(\bm{x})>0$. Using the identity \eqref{eq:identityli} that $\sum_{i=1}^{4}\lambda_i\bm{l}_i=\bm{0}$ and substituting $\bm{l}_1$ by $\bm{l}_2,\bm{l}_3$ and $\bm{l}_4$, we therefore have
	$$
	\lambda_1^{-1}\mathcal{K}^V\left(\begin{matrix}
	     \bm{j}^V_w(\bm{x}) \cdot \bm{l}_2 \\
	     \bm{j}^V_w(\bm{x}) \cdot \bm{l}_3\\
	     \bm{j}^V_w(\bm{x}) \cdot \bm{l}_4
	\end{matrix}\right)=\bm{0},
	$$
	where 
	$$
	\mathcal{K}^V:=\left(\begin{matrix}
		(\lambda_1\dashint_{l_{2}}+\lambda_2\dashint_{l_{1}})E_{\bm{\theta}}& \lambda_3\dashint_{l_{1}}E_{\bm{\theta}}&\lambda_4\dashint_{l_{1}}E_{\bm{\theta}}\\
		-\dashint_{l_2}E_{\bm{\theta}}&\dashint_{l_3}E_{\bm{\theta}}&0\\
	0&-\dashint_{l_3}E_{\bm{\theta}}&\dashint_{l_4}E_{\bm{\theta}}\\
	\end{matrix}\right).
	$$
	It is evident that $\det(\mathcal{K}^V)>0$, and $\bm{l}_2,\bm{l}_3,\bm{l}_4$ are linearly independent due to $\lambda_1>0$. Consequently, we arrive at the conclusion that $\bm{j}^V_{w}(\bm{x}) \equiv \bm{0}$ holds for all $\bm{x}\in \bar{T}$, thus implying that $\Pi_{\bm{\theta},T}^0 w \in \ker(\bm{J}_{\bm{\theta},T}^0)$.
\end{proof}

\begin{proof}[proof of Lemma \ref{lm:kernel} for $k=1$] Given $\bm{w} \in \ker(\bm{J}_{\bm{\theta}}^1)$, it follows that 
$\bm{w} = E_{\bm{\theta}}^{-1} \nabla q$ for some smooth scalar function $q$. Let $\bm{j}_{\bm{w}}^E := \bm{J}_{\bm{\theta},T}^1 \bm{\Pi}_{\bm{\theta},T}^1 \bm{w}$, which, as implied by Problem \ref{problem:curl} and \eqref{eq:weightinterpolate}, satisfies the equation
	$$
	D^E(\bm{x})\left(\begin{matrix}
		\bm{j}^E_{\bm{w}}(\bm{x})\\\bm{\Pi}_{\bm{\theta},T}^1\bm{w}(\bm{x}) 
	\end{matrix}\right)
	= \left(\begin{matrix}
		\frac{\int_{E_{12}}E_{\bm{\theta}}(\bm{w}\cdot\bm{t}_{12})}{\dashint_{E_{12}}E_{\bm{\theta}}} 
		B_2^\varepsilon(\sigma_{12},-\sigma_1)\\
		\frac{\int_{E_{13}}E_{\bm{\theta}}(\bm{w}\cdot\bm{t}_{13})}{\dashint_{E_{13}}E_{\bm{\theta}}}
		B_2^\varepsilon(\sigma_{13},-\sigma_1)\\
		\frac{\int_{E_{14}}E_{\bm{\theta}}(\bm{w}\cdot\bm{t}_{14})}{\dashint_{E_{14}}E_{\bm{\theta}}}
		B_2^\varepsilon(\sigma_{14},-\sigma_1)\\
		\frac{\int_{E_{23}}E_{\bm{\theta}}(\bm{w}\cdot\bm{t}_{23})}{\dashint_{E_{23}}E_{\bm{\theta}}}
		B_2^\varepsilon(\sigma_{23},-\sigma_2)\\
		\frac{\int_{E_{24}}E_{\bm{\theta}}(\bm{w}\cdot\bm{t}_{24})}{\dashint_{E_{24}}E_{\bm{\theta}}}
		B_2^\varepsilon(\sigma_{24},-\sigma_2)\\
		\frac{\int_{E_{34}}E_{\bm{\theta}}(\bm{w}\cdot\bm{t}_{34})}{\dashint_{E_{34}}E_{\bm{\theta}}}
		B_2^\varepsilon(\sigma_{34},-\sigma_3)
	\end{matrix}\right).
	$$
	Considering the geometric interpretation of the 2D-Bernoulli function, it is shown that
	$$
	\frac{\int_{E_{st}}E_{\bm{\theta}}(\bm{w}\cdot\bm{t}_{st})}{\dashint_{E_{st}}E_{\bm{\theta}}}
		B_2^\varepsilon(\sigma_{st},-\sigma_s) = \frac{q(\bm{x}_t) - q(\bm{x}_s)}{\dashint_{E_{st}}E_{\bm{\theta}}} \varepsilon \frac{\dashint_{E_{st}}E_{\bm{\theta}}}{\dashint_{f_{st}}E_{\bm{\theta}}} = \varepsilon \frac{q(\bm{x}_t) - q(\bm{x}_s)}{\dashint_{f_{st}}E_{\bm{\theta}}}.
	$$ 
	As a result, we can multiply each row corresponding to edge $E_{st}$ by $\dashint_{f_{st}}E_{\bm{\theta}}$, leading to the following linear system:
	$$
	\begin{pmatrix}
		({\bm{l}}_{1}\times{\bm{l}}_{2})^\top\frac{\dashint_{f_{12}} E_{\bm{\theta}} }{2}& -\varepsilon \dashint_{l_1} E_{\bm{\theta}} {\bm{l}}_{1}^\top+\varepsilon \dashint_{l_2} E_{\bm{\theta}}{\bm{l}}_{2}^\top \\
		({\bm{l}}_{1}\times{\bm{l}}_{3})^\top\frac{\dashint_{f_{13}} E_{\bm{\theta}} }{2} & -\varepsilon \dashint_{l_1} E_{\bm{\theta}}{\bm{l}}_{1}^\top+\varepsilon \dashint_{l_3} E_{\bm{\theta}}{\bm{l}}_{3}^\top \\
		({\bm{l}}_{1}\times{\bm{l}}_{4})^\top\frac{\dashint_{f_{14}} E_{\bm{\theta}} }{2} & -\varepsilon \dashint_{l_1} E_{\bm{\theta}}{\bm{l}}_{1}^\top+\varepsilon \dashint_{l_4} E_{\bm{\theta}}{\bm{l}}_{4}^\top \\
		({\bm{l}}_{2}\times{\bm{l}}_{3})^\top\frac{\dashint_{f_{23}} E_{\bm{\theta}} }{2} & -\varepsilon \dashint_{l_2} E_{\bm{\theta}}{\bm{l}}_{2}^\top+\varepsilon \dashint_{l_3} E_{\bm{\theta}}{\bm{l}}_{3}^\top \\
		({\bm{l}}_{2}\times{\bm{l}}_{4})^\top\frac{\dashint_{f_{24}} E_{\bm{\theta}} }{2} & -\varepsilon \dashint_{l_2} E_{\bm{\theta}}{\bm{l}}_{2}^\top+\varepsilon \dashint_{l_4} E_{\bm{\theta}}{\bm{l}}_{4}^\top \\
		({\bm{l}}_{3}\times{\bm{l}}_{4})^\top\frac{\dashint_{f_{34}} E_{\bm{\theta}} }{2} & -\varepsilon \dashint_{l_3} E_{\bm{\theta}}{\bm{l}}_{3}^\top+\varepsilon \dashint_{l_4} E_{\bm{\theta}}{\bm{l}}_{4}^\top 
	\end{pmatrix}
	\begin{pmatrix}
		\bm{j}^E_{\bm{w}}(\bm{x})\\\bm{\Pi}_{\bm{\theta},T}^1\bm{w}(\bm{x}) 
	\end{pmatrix}
	= \varepsilon \begin{pmatrix}
		q(\bm{x}_2)-q(\bm{x}_1)\\
		q(\bm{x}_3)-q(\bm{x}_1)\\
		q(\bm{x}_4)-q(\bm{x}_1)\\
		q(\bm{x}_3)-q(\bm{x}_2)\\
		q(\bm{x}_4)-q(\bm{x}_2)\\
		q(\bm{x}_4)-q(\bm{x}_3)\\
	\end{pmatrix}.
	$$
	A linear combination of the equation associated with edge $E_{12},E_{14},E_{24}$ (i.e., subtracting the first equation from the third and adding the fifth equation) gives
	$$
	\frac{\bm{j}^E_{\bm{w}}(\bm{x})}{2}\cdot 
	\left( \bm{l}_1\times\bm{l}_2\dashint_{f_{12}}E_{\bm{\theta}} + \bm{l}_2\times\bm{l}_4\dashint_{f_{24}}E_{\bm{\theta}}+\bm{l}_4\times\bm{l}_1\dashint_{f_{14}}E_{\bm{\theta}} \right)=0.
	$$
	Similarly, we have 
	$$
	\begin{aligned}
		&\frac{\bm{j}^E_{\bm{w}}(\bm{x}) }{2}\cdot 
		\left( \bm{l}_1\times\bm{l}_3\dashint_{f_{13}}E_{\bm{\theta}}+ \bm{l}_3\times\bm{l}_4\dashint_{E_{34}}E_{\bm{\theta}}+ \bm{l}_4\times\bm{l}_1\dashint_{E_{14}}E_{\bm{\theta}} \right)=0,\\
		&\frac{\bm{j}^E_{\bm{w}}(\bm{x}) }{2}\cdot 
		\left( \bm{l}_2\times\bm{l}_3\dashint_{f_{23}}E_{\bm{\theta}}+ \bm{l}_3\times\bm{l}_4\dashint_{E_{34}}E_{\bm{\theta}}+\bm{l}_4\times\bm{l}_2\dashint_{E_{24}}E_{\bm{\theta}} \right)=0.
	\end{aligned}
	$$
	Using the identity \eqref{eq:identityli} that $\sum_{i=1}^{4}\lambda_i\bm{l}_i=\bm{0}$ and substituting $\bm{l}_1$ by $\bm{l}_2,\bm{l}_3$ and $\bm{l}_4$, we have
	$$
	\lambda_1^{-2}\mathcal{K}^E\begin{pmatrix}
		(\bm{l}_2\times\bm{l}_3) \cdot \bm{j}^E_{\bm{w}}(\bm{x}) \\
		(\bm{l}_3\times\bm{l}_4) \cdot \bm{j}^E_{\bm{w}}(\bm{x}) \\
		(\bm{l}_4\times\bm{l}_2) \cdot \bm{j}^E_{\bm{w}}(\bm{x}) \\
	\end{pmatrix}=\bm{0},
	$$
	where 
	$$
	\mathcal{K}^E:=\left(\begin{matrix}
		\lambda_3\dashint_{E_{12}}E_{\bm{\theta}} & \lambda_3\dashint_{E_{14}}E_{\bm{\theta}}&-(\lambda_1\dashint_{E_{24}}+\lambda_4\dashint_{E_{12}}+\lambda_2\dashint_{E_{14}})E_{\bm{\theta}}\\
		-\lambda_2\dashint_{E_{13}}E_{\bm{\theta}}&(\lambda_1\dashint_{E_{34}}+\lambda_3\dashint_{E_{14}}+\lambda_4\dashint_{E_{13}})E_{\bm{\theta}} &-\lambda_2\dashint_{E_{14}}E_{\bm{\theta}}\\
		\dashint_{E_{23}}E_{\bm{\theta}}&\dashint_{E_{34}}E_{\bm{\theta}}&\dashint_{E_{24}}E_{\bm{\theta}}
	\end{matrix}\right).
	$$
	It is obvious that $\det(\mathcal{K}^E)>0$, and $\bm{l}_2\times\bm{l}_3,\bm{l}_3\times\bm{l}_4,\bm{l}_4\times\bm{l}_2$ are linearly independent due to $\lambda_1>0$. Consequently, we arrive at the conclusion that $\bm{j}^E_{\bm{w}}(\bm{x}) \equiv \bm{0}$ holds for all $\bm{x}\in \bar{T}$, thus implying that $\bm{\Pi}_{\bm{\theta},T}^1 \bm{w} \in \ker(\bm{J}_{\bm{\theta},T}^1)$.
\end{proof}

\begin{proof}[proof of Lemma \ref{lm:kernel} for $k=2$]
Given $\bm{w} \in \ker(J_{\bm{\theta}}^2)$, it follows that 
$\bm{w} = E_{\bm{\theta}}^{-1} \nabla \times \bm{q}$ for some smooth vector function $\bm{q}$.
Let $j_{\bm{w}}^F := J_{\bm{\theta},T}^2 \bm{\Pi}_{\bm{\theta},T}^2 \bm{w}$, which, as implied by Problem \ref{problem:div} and \eqref{eq:weightinterpolate}, satisfies the equation
	$$
	D^F(\bm{x})\left(\begin{matrix}
		{j}_{\bm{w}}^F(\bm{x})\\\bm{\Pi}_{\bm{\theta},T}^2\bm{w}
	\end{matrix}\right)
	= \left(\begin{matrix}
		\frac{\int_{F_{1}}E_{\bm{\theta}}(\bm{w}\cdot\bm{n}_{1})}{\dashint_{F_{1}}E_{\bm{\theta}}}B_2^\varepsilon(\sigma_{23},\sigma_{34},-\sigma_2)\\
		\frac{\int_{F_{2}}E_{\bm{\theta}}(\bm{w}\cdot\bm{n}_{2})}{\dashint_{F_{2}}E_{\bm{\theta}}}B_2^\varepsilon(\sigma_{31},\sigma_{14},-\sigma_3)\\
		\frac{\int_{F_{3}}E_{\bm{\theta}}(\bm{w}\cdot\bm{n}_{3})}{\dashint_{F_{3}}E_{\bm{\theta}}}B_2^\varepsilon(\sigma_{41},\sigma_{12},-\sigma_4)\\
		\frac{\int_{F_{4}}E_{\bm{\theta}}(\bm{w}\cdot\bm{n}_{4})}{\dashint_{F_{4}}E_{\bm{\theta}}}B_2^\varepsilon(\sigma_{13},\sigma_{32},-\sigma_1)\\
	\end{matrix}\right).
	$$
	Considering the geometric interpretation of the 3D-Bernoulli function, it is shown that
	$$
	\frac{\int_{F_{1}}E_{\bm{\theta}}(\bm{w}\cdot\bm{n}_{1})}{\dashint_{F_{1}}E_{\bm{\theta}}}B_2^\varepsilon(\sigma_{23},\sigma_{34},-\sigma_2) = \varepsilon \frac{\int_{F_1} (\nabla \times \bm{q}) \cdot \bm{n}_1}{ 
	\dashint_{T_{1}}E_{\bm{\theta}}}.
	$$ 
	Therefore, we can multiply the row associated with facet $F_{i}$ by $\dashint_{T_{i}}E_{\bm{\theta}}$ respectively and sum up all the rows, which gives
	$$
	{j}_{\bm{w}}^F(\bm{x}) \left(\sum_{i=1}^4\int_{T_i}E_{\bm{\theta}}\right) = \varepsilon\sum_{i=1}^4\int_{F_i}(\nabla\times\bm{q})\cdot\bm{n}_i=\varepsilon\int_{\partial T}\nabla\times\bm{q}\cdot\bm{n}=\varepsilon\int_T\nabla\cdot(\nabla\times\bm{q})=0.
	$$
	Then we have ${j}_{\bm{w}}^F(\bm{x})\equiv0$ in $T$, implying that $\bm{\Pi}_{\bm{\theta},T}^2 \bm{w} \in \ker(J_{\bm{\theta},T}^2)$.
\end{proof}

\subsection{Commutativity}
With the kernel preserving properties, we are able to give the commutativity of the diagram \eqref{eq:diagram}.

\begin{theorem}[commutativity] \label{tm:commutativity}
Assume that $\bm{\theta}$ is constant in $T$. It holds that $\Pi^{k+1}_{\bm{\theta},T}J_{\bm{\theta}}^k=J_{\bm{\theta},T}^k\Pi_{\bm{\theta},T}^k$ for $k=0,1,2$.
\end{theorem}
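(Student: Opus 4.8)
The plan is to prove the identity componentwise by testing both sides against the unisolvent degrees of freedom of $\mathcal{S}_{1^-}^{k+1}(T)$. Fixing $k\in\{0,1,2\}$, both $\Pi_{\bm{\theta},T}^{k+1}J_{\bm{\theta}}^{k}v$ and $J_{\bm{\theta},T}^{k}\Pi_{\bm{\theta},T}^{k}v$ lie in $\mathcal{S}_{1^-}^{k+1}(T)$ (for the latter, this is the codomain recorded in diagram \eqref{eq:diagram}), so by the unisolvence of the DOFs established in Lemmas \ref{lemma:curldof} and \ref{lm:divdof} it suffices to show that each functional $l_\beta^{k+1}$ agrees on the two expressions. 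The three cases run in parallel, the relevant $(k+1)$-subsimplex carrying $l_\beta^{k+1}$ being an edge $E_{ij}$, a facet $F_i$, and the element $T$ respectively.

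For the left-hand side I would use that the weighted interpolation reproduces weighted DOFs, namely $l_\beta^{k+1}(\Pi_{\bm{\theta},T}^{k+1}w)=l_\beta^{k+1}(E_{\bm{\theta}}w)/W_\beta$ with $W_\beta$ the exponential average over the subsimplex. Taking $w=J_{\bm{\theta}}^{k}v$ and invoking the exponential-fitting identity \eqref{eq:identity} in the form $E_{\bm{\theta}}J_{\bm{\theta}}^{k}v=\varepsilon\, d^{k}(E_{\bm{\theta}}v)$, the numerator becomes $\varepsilon\,l_\beta^{k+1}(d^{k}(E_{\bm{\theta}}v))$; an application of the fundamental theorem of calculus (resp.\ Stokes' theorem, the divergence theorem) then rewrites this as the boundary integral of $E_{\bm{\theta}}v$ over the boundary of the subsimplex --- the vertex difference of $E_{\bm{\theta}}v$ for $k=0$, the circulation of $E_{\bm{\theta}}v$ along $\partial F_i$ for $k=1$, and the outflux of $E_{\bm{\theta}}v$ through $\partial T$ for $k=2$, each divided by $W_\beta$.

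For the right-hand side I would exploit that the construction produces value and flux jointly. By linearity the pair $(\Pi_{\bm{\theta},T}^{k}v,\ J_{\bm{\theta},T}^{k}\Pi_{\bm{\theta},T}^{k}v)$ solves the algebraic system of Problem \ref{problem:grad}, \ref{problem:curl}, or \ref{problem:div} whose right-hand entries are built from $l_\gamma^{k}(\Pi_{\bm{\theta},T}^{k}v)$, which by definition of the weighted interpolation are exactly the $E_{\bm{\theta}}$-weighted DOFs of $v$. Placing the evaluation point $\bm{x}$ on the target $(k+1)$-subsimplex and summing the defining equations over the lower-dimensional subsimplices that tile it --- the two subedges $l_i,l_j$ of $E_{ij}$, the three subfacets of $F_i$, the four subtetrahedra of $T$ --- after rescaling each equation by its exponential average and using the geometric interpretation of the Bernoulli functions, the interior contributions of $\Pi_{\bm{\theta},T}^{k}v$ telescope and cancel, exactly as in the proof of Lemma \ref{lm:kernel} but now with nonvanishing boundary data. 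What remains is the relation $(\text{trace of }J_{\bm{\theta},T}^{k}\Pi_{\bm{\theta},T}^{k}v)\cdot\!\int_{\text{subsimplex}}\!E_{\bm{\theta}}=\varepsilon\int_{\partial(\text{subsimplex})}E_{\bm{\theta}}v$, which shows the tangential/normal trace of the discrete flux is constant on the subsimplex with precisely the value found on the left; integrating over the subsimplex and dividing by $W_\beta$ then matches the two DOFs. For $k=2$ this step collapses immediately, since Corollary \ref{coro:divflux} already exhibits the flux as the constant $\varepsilon\dashint_{F_i}E_{\bm{\theta}}/\!\int_T E_{\bm{\theta}}$, reducing the claim to the divergence theorem.

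The main obstacle I anticipate is the orientation bookkeeping in this telescoping step. One must keep track of the signs $(\pm)_{jst}$ and the asymmetric roles of the two arguments in each Bernoulli weight (for instance the pairing $-B_2^\varepsilon(\sigma_s,\sigma_t)\bm{l}_s+B_2^\varepsilon(\sigma_t,\sigma_s)\bm{l}_t$ in \eqref{eq:algcurl}) so that, upon summing around the boundary of the subsimplex, each subedge/subfacet term of $\Pi_{\bm{\theta},T}^{k}v$ occurs twice with opposite sign and cancels, while the genuine boundary terms assemble with exactly the orientation of $\partial(\text{subsimplex})$ used in the Stokes computation on the left. Once these orientations are aligned, the positivity and geometric identities of the Bernoulli functions from Sections \ref{sec:curl} and \ref{sec:div} render the remaining algebra routine.
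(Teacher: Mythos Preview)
Your DOF-matching strategy has a circularity at its starting point. You assert that $J_{\bm{\theta},T}^{k}\Pi_{\bm{\theta},T}^{k}v\in\mathcal{S}_{1^-}^{k+1}(T)$ by citing diagram~\eqref{eq:diagram}, but that diagram is precisely what Theorem~\ref{tm:commutativity} is meant to establish; nowhere prior is it shown that the pointwise-defined flux functions $\bm{j}_i^V$, $\bm{j}_{ij}^E$ land in the next space of the sequence. Your telescoping, carried out with $\bm{x}$ placed on a $(k+1)$-subsimplex, at best pins down the tangential/normal trace of the discrete flux along that subsimplex; it says nothing about the value of $J_{\bm{\theta},T}^{k}\Pi_{\bm{\theta},T}^{k}v$ at interior points of $T$. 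Agreement of the functionals $l_\beta^{k+1}$ therefore cannot be promoted to equality of the two functions on $\bar T$ without the very membership you have assumed. (The case $k=2$ escapes this trap: Corollary~\ref{coro:divflux} shows each $j_i^F$ is a genuine constant, hence $J_{\bm{\theta},T}^{2}\bm{\Pi}_{\bm{\theta},T}^{2}\bm{w}\in\mathcal{S}_{1^-}^{3}(T)$ automatically, and your argument there is correct and coincides with the paper's.)

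The paper's route is different and avoids this issue by comparing the two sides pointwise at every $\bm{x}\in\bar T$. For $k=0,1$ it writes the full level-$(k{+}1)$ algebraic system (Problem~\ref{problem:curl} or~\ref{problem:div}) for the pair $\bigl(J_{\bm{\theta},T}^{k+1}\Pi_{\bm{\theta},T}^{k+1}J_{\bm{\theta}}^{k}w,\ \Pi_{\bm{\theta},T}^{k+1}J_{\bm{\theta}}^{k}w\bigr)$ and invokes Lemma~\ref{lm:kernel} (kernel preservation) --- the lemma itself, not merely its telescoping technique --- to kill the first component. What remains is a linear system for $\Pi_{\bm{\theta},T}^{k+1}J_{\bm{\theta}}^{k}w$ alone; after rescaling each row by the exponential average over the corresponding sub-simplex one recognises it as exactly the system obtained from the level-$k$ problem after eliminating $\Pi_{\bm{\theta},T}^{k}w$, which $J_{\bm{\theta},T}^{k}\Pi_{\bm{\theta},T}^{k}w$ satisfies by construction. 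The well-posedness Lemmas~\ref{lemma:curlunisolve} and~\ref{lm:divunisolve} then force equality at $\bm{x}$, hence everywhere. The missing idea in your plan is this use of kernel preservation at level $k{+}1$; the telescoping you describe happens at level $k$ and serves a different purpose.
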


\begin{proof}[proof of Theorem \ref{tm:commutativity} for $k=0$]
	For $w\in C^\infty(T)$, employing a technique similar to that utilized in the proof of Lemma \ref{lm:kernel} for the case of $k=0$, it becomes evident that $(\bm{J}_{\bm{\theta},T}^0\Pi_{\bm{\theta},T}^0w,\Pi_{\bm{\theta},T}^0w)$ satisfy the following equation:
	$$
	\begin{pmatrix}
		\bm{l}_1^\top \dashint_{l_1}E_{\bm{\theta}} & \varepsilon E_{\bm{\theta}}(\bm{x})\\
		\bm{l}_2^\top \dashint_{l_2}E_{\bm{\theta}}& \varepsilon E_{\bm{\theta}}(\bm{x})\\
		\bm{l}_3^\top \dashint_{l_3}E_{\bm{\theta}}& \varepsilon E_{\bm{\theta}}(\bm{x})\\
		\bm{l}_4^\top \dashint_{l_4}E_{\bm{\theta}}& \varepsilon E_{\bm{\theta}}(\bm{x})\\
	\end{pmatrix}
	\begin{pmatrix}
		\bm{J}_{\bm{\theta},T}^0\Pi_{\bm{\theta},T}^0w\\\Pi_{\bm{\theta},T}^0{w}
	\end{pmatrix}
	= \varepsilon\left(\begin{matrix}
		E_{\bm{\theta}}(\bm{x}_1)w(\bm{x}_1)\\
		E_{\bm{\theta}}(\bm{x}_2)w(\bm{x}_2)\\
		E_{\bm{\theta}}(\bm{x}_3)w(\bm{x}_3)\\
		E_{\bm{\theta}}(\bm{x}_4)w(\bm{x}_4)\\
	\end{matrix}\right),
	$$
	which implies that, for $1\leq i < j \leq 4$
	\begin{equation} \label{eq:diagram-k0-1}
	\left( 
	-\bm{l}_i \dashint_{l_i}E_{\bm{\theta}}+\bm{l}_j \dashint_{l_j}E_{\bm{\theta}} \right) 
	\cdot \bm{J}_{\bm{\theta},T}^0\Pi_{\bm{\theta},T}^0w = -\varepsilon E_{\bm{\theta}}(\bm{x}_i)w(\bm{x}_i)+\varepsilon E_{\bm{\theta}}(\bm{x}_j)w(\bm{x}_j).
	\end{equation}
	
	Next, we consider the equation for $(\bm{J}_{\bm{\theta},T}^1 \bm{\Pi}_{\bm{\theta},T}^1\bm{J}_{\bm{\theta}}^0w,\bm{\Pi}_{\bm{\theta},T}^1 \bm{J}_{\bm{\theta}}^0w)$. Utilizing the geometric interpretation of the 2D-Bernoulli function, it is shown that
	$$
	\frac{\int_{E_{st}}E_{\bm{\theta}}(\bm{J}_{\bm{\theta}}^0w \cdot\bm{t}_{st})}{\dashint_{E_{st}}E_{\bm{\theta}}}
		B_2^\varepsilon(\sigma_{st},-\sigma_s) 
	= \varepsilon^2 \frac{\int_{E_{st}} \nabla (E_{\bm{\theta}} w)\cdot \bm{t}_{st} }{\dashint_{f_{st}}E_{\bm{\theta}}} 
	= \varepsilon^2 \frac{ E_{\bm{\theta}}(\bm{x}_t)w(\bm{x}_t) -  E_{\bm{\theta}}(\bm{x}_s)w(\bm{x}_s)}{\dashint_{f_{st}}E_{\bm{\theta}}}.
	$$ 
Therefore, as shown in the proof of Lemma \ref{lm:kernel} for $k=1$, we have
\begin{equation} \label{eq:diagram-k0-2}
		\begin{pmatrix}
		({\bm{l}}_{1}\times{\bm{l}}_{2})^\top\frac{\dashint_{f_{12}} E_{\bm{\theta}} }{2}& -\varepsilon \dashint_{l_1} E_{\bm{\theta}} {\bm{l}}_{1}^\top+\varepsilon \dashint_{l_2} E_{\bm{\theta}}{\bm{l}}_{2}^\top \\
		({\bm{l}}_{1}\times{\bm{l}}_{3})^\top\frac{\dashint_{f_{13}} E_{\bm{\theta}} }{2} & -\varepsilon \dashint_{l_1} E_{\bm{\theta}}{\bm{l}}_{1}^\top+\varepsilon \dashint_{l_3} E_{\bm{\theta}}{\bm{l}}_{3}^\top \\
		({\bm{l}}_{1}\times{\bm{l}}_{4})^\top\frac{\dashint_{f_{14}} E_{\bm{\theta}} }{2} & -\varepsilon \dashint_{l_1} E_{\bm{\theta}}{\bm{l}}_{1}^\top+\varepsilon \dashint_{l_4} E_{\bm{\theta}}{\bm{l}}_{4}^\top \\
		({\bm{l}}_{2}\times{\bm{l}}_{3})^\top\frac{\dashint_{f_{23}} E_{\bm{\theta}} }{2} & -\varepsilon \dashint_{l_2} E_{\bm{\theta}}{\bm{l}}_{2}^\top+\varepsilon \dashint_{l_3} E_{\bm{\theta}}{\bm{l}}_{3}^\top \\
		({\bm{l}}_{2}\times{\bm{l}}_{4})^\top\frac{\dashint_{f_{24}} E_{\bm{\theta}} }{2} & -\varepsilon \dashint_{l_2} E_{\bm{\theta}}{\bm{l}}_{2}^\top+\varepsilon \dashint_{l_4} E_{\bm{\theta}}{\bm{l}}_{4}^\top \\
		({\bm{l}}_{3}\times{\bm{l}}_{4})^\top\frac{\dashint_{f_{34}} E_{\bm{\theta}} }{2} & -\varepsilon \dashint_{l_3} E_{\bm{\theta}}{\bm{l}}_{3}^\top+\varepsilon \dashint_{l_4} E_{\bm{\theta}}{\bm{l}}_{4}^\top 
	\end{pmatrix}
	\begin{pmatrix}
		\bm{J}_{\bm{\theta},T}^1 \bm{\Pi}_{\bm{\theta},T}^1\bm{J}_{\bm{\theta}}^0w \\ 
		\bm{\Pi}_{\bm{\theta},T}^1\bm{J}_{\bm{\theta}}^0w
	\end{pmatrix}
	= \varepsilon^2 
	\begin{pmatrix}
		E_{\bm{\theta}}(\bm{x}_2)w(\bm{x}_2)-E_{\bm{\theta}}(\bm{x}_1)w(\bm{x}_1)\\
		E_{\bm{\theta}}(\bm{x}_3)w(\bm{x}_3)-E_{\bm{\theta}}(\bm{x}_1)w(\bm{x}_1)\\
		E_{\bm{\theta}}(\bm{x}_4)w(\bm{x}_4)-E_{\bm{\theta}}(\bm{x}_1)w(\bm{x}_1)\\
		E_{\bm{\theta}}(\bm{x}_3)w(\bm{x}_3)-E_{\bm{\theta}}(\bm{x}_2)w(\bm{x}_2)\\
		E_{\bm{\theta}}(\bm{x}_4)w(\bm{x}_4)-E_{\bm{\theta}}(\bm{x}_2)w(\bm{x}_2)\\
		E_{\bm{\theta}}(\bm{x}_4)w(\bm{x}_4)-E_{\bm{\theta}}(\bm{x}_3)w(\bm{x}_3)\\
	\end{pmatrix}.
\end{equation}
Note that $\bm{J}_{\bm{\theta}}^0w \in \ker(\bm{J}_{\bm{\theta}}^1)$, which subsequently invokes Lemma \ref{lm:kernel} (kernel preservation) to obtain $\bm{J}_{\bm{\theta},T}^1 \bm{\Pi}_{\bm{\theta},T}^1\bm{J}_{\bm{\theta}}^0w\equiv\bm{0}$. Then, the algebraic system \eqref{eq:diagram-k0-2} for $\bm{\Pi}_{\bm{\theta},T}^1 \bm{J}_{\bm{\theta}}^0w$ is the same as \eqref{eq:diagram-k0-1} for $\bm{J}_{\bm{\theta},T}^0 \Pi_{\bm{\theta},T}^0w$, and the system admits a unique solution by virtue of Lemma \ref{lemma:curlunisolve} (well-posedness of Problem \ref{problem:curl}). Consequently, we deduce that $\bm{\Pi}_{\bm{\theta},T}^1 \bm{J}_{\bm{\theta}}^0w = \bm{J}_{\bm{\theta},T}^0 \Pi_{\bm{\theta},T}^0w$.
\end{proof}
	
\begin{proof}[proof of Theorem \ref{tm:commutativity} for $k=1$]
	For $\bm{w}\in C^\infty(T;\mathbb{R}^3)$, employing a same technique in the proof of Lemma \ref{lm:kernel}, $(\bm{J}_{\bm{\theta},T}^1 \bm{\Pi}_{\bm{\theta},T}^1\bm{w}, \bm{\Pi}_{\bm{\theta},T}^1\bm{w})$ satisfies the following equation:
	$$
	\begin{pmatrix}
		({\bm{l}}_{1}\times{\bm{l}}_{2})^\top\frac{\dashint_{f_{12}} E_{\bm{\theta}} }{2}& -\varepsilon \dashint_{l_1} E_{\bm{\theta}} {\bm{l}}_{1}^\top+\varepsilon \dashint_{l_2} E_{\bm{\theta}}{\bm{l}}_{2}^\top \\
		({\bm{l}}_{1}\times{\bm{l}}_{3})^\top\frac{\dashint_{f_{13}} E_{\bm{\theta}} }{2} & -\varepsilon \dashint_{l_1} E_{\bm{\theta}}{\bm{l}}_{1}^\top+\varepsilon \dashint_{l_3} E_{\bm{\theta}}{\bm{l}}_{3}^\top \\
		({\bm{l}}_{1}\times{\bm{l}}_{4})^\top\frac{\dashint_{f_{14}} E_{\bm{\theta}} }{2} & -\varepsilon \dashint_{l_1} E_{\bm{\theta}}{\bm{l}}_{1}^\top+\varepsilon \dashint_{l_4} E_{\bm{\theta}}{\bm{l}}_{4}^\top \\
		({\bm{l}}_{2}\times{\bm{l}}_{3})^\top\frac{\dashint_{f_{23}} E_{\bm{\theta}} }{2} & -\varepsilon \dashint_{l_2} E_{\bm{\theta}}{\bm{l}}_{2}^\top+\varepsilon \dashint_{l_3} E_{\bm{\theta}}{\bm{l}}_{3}^\top \\
		({\bm{l}}_{2}\times{\bm{l}}_{4})^\top\frac{\dashint_{f_{24}} E_{\bm{\theta}} }{2} & -\varepsilon \dashint_{l_2} E_{\bm{\theta}}{\bm{l}}_{2}^\top+\varepsilon \dashint_{l_4} E_{\bm{\theta}}{\bm{l}}_{4}^\top \\
		({\bm{l}}_{3}\times{\bm{l}}_{4})^\top\frac{\dashint_{f_{34}} E_{\bm{\theta}} }{2} & -\varepsilon \dashint_{l_3} E_{\bm{\theta}}{\bm{l}}_{3}^\top+\varepsilon \dashint_{l_4} E_{\bm{\theta}}{\bm{l}}_{4}^\top 
	\end{pmatrix}
	\left(\begin{matrix}
		\bm{J}_{\bm{\theta},T}^1 \bm{\Pi}_{\bm{\theta},T}^1\bm{w} \\ \bm{\Pi}_{\bm{\theta},T}^1\bm{w}
	\end{matrix}\right)
	= \varepsilon\left(\begin{matrix}
		\int_{E_{12}}(\bm{w}\cdot\bm{t}_{12})E_{\bm{\theta}}\\
		\int_{E_{13}}(\bm{w}\cdot\bm{t}_{13})E_{\bm{\theta}}\\
		\int_{E_{14}}(\bm{w}\cdot\bm{t}_{14})E_{\bm{\theta}}\\
		\int_{E_{23}}(\bm{w}\cdot\bm{t}_{23})E_{\bm{\theta}}\\
		\int_{E_{24}}(\bm{w}\cdot\bm{t}_{24})E_{\bm{\theta}}\\
		\int_{E_{34}}(\bm{w}\cdot\bm{t}_{34})E_{\bm{\theta}}\\
	\end{matrix}\right).
	$$
	Through several linear combinations, we can eliminate $\bm{\Pi}_{\bm{\theta},T}^1\bm{w}$. More precisely,
\begin{align}
	& ~~ \Big( \dashint_{f_{st}}E_{\bm{\theta}}\frac{\bm{l}_s\times\bm{l}_t}{2} + \dashint_{f_{tr}}E_{\bm{\theta}}\frac{\bm{l}_t\times\bm{l}_r}{2} - \dashint_{f_{sr}}E_{\bm{\theta}}\frac{\bm{l}_s\times\bm{l}_r}{2} \Big) \cdot \bm{J}_{\bm{\theta},T}^1 \bm{\Pi}_{\bm{\theta},T}^1\bm{w} \label{eq:diagram-k1-1} \\
= ~& \varepsilon\int_{E_{st}}(\bm{w}\cdot\bm{t}_{st})E_{\bm{\theta}} + \varepsilon\int_{E_{tr}}(\bm{w}\cdot\bm{t}_{tr})E_{\bm{\theta}} - \varepsilon\int_{E_{sr}}(\bm{w}\cdot\bm{t}_{sr})E_{\bm{\theta}}, \quad \forall 1\leq s < t < r \leq 4.\notag
\end{align}
	
	Next, we consider the equation for $(J_{\bm{\theta},T}^2 \bm{\Pi}_{\bm{\theta},T}^2\bm{J}_{\bm{\theta}}^1\bm{w},\bm{\Pi}_{\bm{\theta},T}^2 \bm{J}_{\bm{\theta}}^1\bm{w})$. Utilizing the geometric interpretation of the 3D-Bernoulli function, it is shown that
$$
\frac{\int_{F_{1}}E_{\bm{\theta}}(\bm{J}_{\bm{\theta}}^1 \bm{w} \cdot\bm{n}_{1})}{\dashint_{F_{1}}E_{\bm{\theta}}}B_2^\varepsilon(\sigma_{23},\sigma_{34},-\sigma_2) = 
\varepsilon^2 \frac{\int_{F_1} \nabla \times (E_{\bm{\theta}} \bm{w}) \cdot \bm{n}_1}{\dashint_{T_1} E_{\bm{\theta}}}.
$$  
Therefore, regarding to Problem \ref{problem:div}, we can multiply the row associated by facet $F_{i}$ by $\dashint_{T_{i}}E_{\bm{\theta}}$ respectively, we have 
\begin{equation} \label{eq:diagram-k1-2}
	\tilde{D}^F(\bm{x})
	\begin{pmatrix}
		J_{\bm{\theta},T}^2 \bm{\Pi}_{\bm{\theta},T}^2 \bm{J}_{\bm{\theta}}^1\bm{w} \\
		\bm{\Pi}_{\bm{\theta},T}^2 \bm{J}_{\bm{\theta}}^1\bm{w}
	\end{pmatrix}
	=\varepsilon^2 
	\begin{pmatrix}
	\int_{F_1} \nabla\times (E_{\bm{\theta}}\bm{w})\cdot \bm{n}_1\\
	\int_{F_2} \nabla\times (E_{\bm{\theta}}\bm{w})\cdot \bm{n}_2\\
	\int_{F_3} \nabla\times (E_{\bm{\theta}}\bm{w})\cdot \bm{n}_3\\
	\int_{F_4} \nabla\times (E_{\bm{\theta}}\bm{w})\cdot \bm{n}_4
	\end{pmatrix},
\end{equation}
	where
	$$
	\tilde{D}^F(\bm{x}) :=
	\begin{pmatrix}
	\int_{T_1}E_{\bm{\theta}}	&\varepsilon( \dashint_{f_{23}}E_{\bm{\theta}}\frac{\bm{l}_2\times\bm{l}_3}{2}+ \dashint_{f_{34}}E_{\bm{\theta}}\frac{\bm{l}_3\times\bm{l}_4}{2}+ \dashint_{f_{24}}E_{\bm{\theta}}\frac{\bm{l}_4\times\bm{l}_2}{2})^\top\\
		\int_{T_2}E_{\bm{\theta}}	&\varepsilon( \dashint_{f_{13}}E_{\bm{\theta}}\frac{\bm{l}_3\times\bm{l}_1}{2}+ \dashint_{f_{14}}E_{\bm{\theta}}\frac{\bm{l}_1\times\bm{l}_4}{2}+ \dashint_{f_{34}}E_{\bm{\theta}}\frac{\bm{l}_4\times\bm{l}_3}{2})^\top\\
		\int_{T_3}E_{\bm{\theta}}	&\varepsilon(\dashint_{f_{14}}E_{\bm{\theta}}\frac{\bm{l}_4\times\bm{l}_1}{2}+ \dashint_{f_{12}}E_{\bm{\theta}}\frac{\bm{l}_1\times\bm{l}_2}{2}+ \dashint_{f_{24}}E_{\bm{\theta}}\frac{\bm{l}_2\times\bm{l}_4)}{2})^\top\\
		\int_{T_4}E_{\bm{\theta}}	&\varepsilon( \dashint_{f_{13}}E_{\bm{\theta}}\frac{\bm{l}_1\times\bm{l}_3}{2}+ \dashint_{f_{23}}E_{\bm{\theta}}\frac{\bm{l}_3\times\bm{l}_2}{2}+\dashint_{f_{12}}E_{\bm{\theta}}\frac{\bm{l}_2\times\bm{l}_1}{2})^\top
	\end{pmatrix}.
	$$
Note that $\bm{J}_{\bm{\theta}}^1\bm{w} \in \ker(J_{\bm{\theta}}^2)$, which subsequently invokes Lemma \ref{lm:kernel} (kernel preservation) to obtain $J_{\bm{\theta},T}^2 \bm{\Pi}_{\bm{\theta},T}^2 \bm{J}_{\bm{\theta}}^1 \bm{w} \equiv 0$. Then, the algebraic system \eqref{eq:diagram-k1-2} for $\bm{\Pi}_{\bm{\theta},T}^2 \bm{J}_{\bm{\theta}}^1\bm{w}$ is the same as \eqref{eq:diagram-k1-1} for $\bm{J}_{\bm{\theta},T}^1 \bm{\Pi}_{\bm{\theta},T}^1\bm{w}$. Therefore, $\bm{\Pi}_{\bm{\theta},T}^2 \bm{J}_{\bm{\theta}}^1\bm{w} = \bm{J}_{\bm{\theta},T}^1 \Pi_{\bm{\theta},T}^1\bm{w}$ by applying Lemma \ref{lm:divunisolve} (well-posedness of Problem \ref{problem:div}).
\end{proof}

\begin{proof}[proof of Theorem \ref{tm:commutativity} for $k=2$] For $\bm{w} \in C^\infty(T;\mathbb{R}^3)$, Using \eqref{eq:weightinterpolate} and the fact that $\phi^T = \frac{1}{|T|}$, a direct calculation shows that
$$ 
\Pi_{\bm{\theta},T}^3 J_{\bm{\theta}}^2 \bm{w} = \frac{l_T^3(E_{\bm{\theta}}J_{\bm{\theta}}^2 \bm{w})}{ \dashint_T E_{\bm{\theta}}} \phi^T 
= \varepsilon \frac{\int_T \nabla \cdot (E_{\bm{\theta}} \bm{w})}{ \int_T E_{\bm{\theta}}}  
= \varepsilon \frac{\int_{\partial T} E_{\bm{\theta}} \bm{w} \cdot \bm{n} }{ \int_T E_{\bm{\theta}}}.
$$ 
Furthermore, applying Corollary \ref{coro:divflux} (constant flux of $\mathcal{S}_{1^-}^2$),
$$ 
\begin{aligned}
J_{\bm{\theta},T}^2 \bm{\Pi}_{\bm{\theta},T}^2 \bm{w} 
& = \sum_{F_i \in \mathcal{F}_T} l_i^2(\bm{\Pi}_{\bm{\theta},T}^2 \bm{w}) j_i^F(\bm{x}) 
= \sum_{F_i\in\mathcal{F}_T} \frac{l_i^2(E_{\bm{\theta}}\bm{w})}{\dashint_{F_i}E_{\bm{\theta}}} j_i^F(\bm{x}) \\
& = \sum_{F_i\in\mathcal{F}_T} \varepsilon \frac{l_i^2(E_{\bm{\theta}}\bm{w})}{\dashint_{F_i}E_{\bm{\theta}}}
\cdot  \frac{\dashint_{F_i}E_{\bm{\theta}}}{\int_{T}E_{\bm{\theta}}} 
= \varepsilon  \sum_{F_i\in\mathcal{F}_T} \frac{\int_{F_i} E_{\bm{\theta}} \bm{w} \cdot \bm{n}_i}{\int_{T}E_{\bm{\theta}}}, 
\end{aligned}
$$ 
we have $\Pi_{\bm{\theta},T}^3 J_{\bm{\theta}}^2 \bm{w} =  J_{\bm{\theta},T}^2 \bm{\Pi}_{\bm{\theta},T}^2 \bm{w}$.\end{proof}

Since constant is contained in the local space $\mathcal{S}_{1^-}^k(T)$, the following corollary directly follows from  Theorem \ref{tm:commutativity} (commutativity).

\begin{corollary}[constant preservation for flux] \label{co:const-J}
Assume that $\bm{\theta}$ is constant in $T$. If the flux ${J}^k_{\bm{\theta}}{w}$ is constant in a simplex $T$, then the discrete flux $J_{\bm{\theta},T}^k\Pi_{\bm{\theta},T}^kw$ is the same constant.
\end{corollary}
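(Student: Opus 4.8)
The plan is to obtain this corollary as an immediate consequence of the commutativity established in Theorem~\ref{tm:commutativity}, combined with the fact that the weighted interpolation reproduces constants. First I would set $c := J_{\bm{\theta}}^k w$ and observe that, by hypothesis, $c$ is a constant (a scalar when $k=2$ and a constant vector when $k=0,1$). Applying the commutativity relation $\Pi_{\bm{\theta},T}^{k+1} J_{\bm{\theta}}^k = J_{\bm{\theta},T}^k \Pi_{\bm{\theta},T}^k$ of Theorem~\ref{tm:commutativity}, which is legitimate precisely because $\bm{\theta}$ is assumed constant in $T$, I would write
\begin{equation*}
J_{\bm{\theta},T}^k \Pi_{\bm{\theta},T}^k w = \Pi_{\bm{\theta},T}^{k+1} J_{\bm{\theta}}^k w = \Pi_{\bm{\theta},T}^{k+1} c,
\end{equation*}
so that everything reduces to evaluating $\Pi_{\bm{\theta},T}^{k+1} c$.

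The second step is to recall that the constant $c$ lies in the target space $\mathcal{S}_{1^-}^{k+1}(T)$: for $k=0,1$ this is exactly the content of Remark~\ref{rm:approx-curl} and Remark~\ref{rm:approx-div} (constant vectors belong to $\mathcal{S}_{1^-}^1(T)$ and $\mathcal{S}_{1^-}^2(T)$), while for $k=2$ the target space $\mathcal{S}_{1^-}^3(T)$ is by definition the piecewise-constant space with basis $\phi^T=1/|T|$, which trivially contains $c$. Since the weighted interpolation $\Pi_{\bm{\theta},T}^{k+1}$ restricted to $\mathcal{S}_{1^-}^{k+1}(T)$ is the identity operator, as noted directly after the definition \eqref{eq:weightinterpolate}, I would conclude $\Pi_{\bm{\theta},T}^{k+1} c = c$. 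Feeding this back into the display above gives $J_{\bm{\theta},T}^k \Pi_{\bm{\theta},T}^k w = c = J_{\bm{\theta}}^k w$, which is precisely the assertion that the discrete flux equals the same constant.

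Because both ingredients --- commutativity and the identity-on-constants property --- are already in hand, there is essentially no analytic obstacle here; the only point requiring mild care is the bookkeeping of which space each object inhabits and the verification that constants are reproduced by $\Pi_{\bm{\theta},T}^{k+1}$. For $k=2$ one may check this directly from \eqref{eq:weightinterpolate}, namely $\Pi_{\bm{\theta},T}^3 c = \frac{l_T^3(E_{\bm{\theta}} c)}{\dashint_T E_{\bm{\theta}}}\phi^T = c\,|T|\,\phi^T = c$. No hypothesis beyond the stated assumption that $\bm{\theta}$ is constant in $T$ is needed, since Theorem~\ref{tm:commutativity} itself presupposes exactly this.
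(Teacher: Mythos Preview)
Your proposal is correct and follows essentially the same route as the paper: apply the commutativity of Theorem~\ref{tm:commutativity} to write $J_{\bm{\theta},T}^k \Pi_{\bm{\theta},T}^k w = \Pi_{\bm{\theta},T}^{k+1} c$, then use that constants lie in $\mathcal{S}_{1^-}^{k+1}(T)$ so that the weighted interpolation acts as the identity. The paper states this in a single sentence; your version simply unpacks the details (and the separate verification for $k=2$) more explicitly.
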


\begin{remark}[piecewise constant approximation of variable $\bm{\theta}$] \label{rm:theta-T}
	Although the above induction assume that $\bm{\theta}$ is constant in $T$, given a fixed point $\bm{x}_T\in\bar{T}$, we can apply  $\bm{\theta} \equiv \bm{\theta}(\bm{x}_T)$ to derive  such that $J_{\bm{\theta}(\bm{x}_T),T}^k \Pi_{{\bm{\theta}}(\bm{x}_T),T}^kw =\Pi_{{\bm{\theta}}(\bm{x}_T),T}^{k+1}J_{\bm{\theta}(\bm{x}_T)}^kw $, for $k=0,1,2$.
\end{remark}
%
%

\section{Applications to convection-diffusion problems} \label{sec:application}
In this section, we apply the exponentially-fitted finite elements designed above to solve convection-diffusion equations. Following the definitions of the differential operator $d^k$ and the flux $J_{\bm{\theta}}^k$ given by \eqref{eq:flux}, the convection-diffusion equations \eqref{eq:grad-cd}, \eqref{eq:curl-cd}, and \eqref{eq:div-cd} adopt a unified form for different values of $k$, corresponding to $k=0,1,2$ respectively:

\begin{equation}\label{eq:generalcd}
	\left\{\begin{aligned}
		&d^{k,*}  J_{\bm{\theta}}^k u + \gamma {u}={f}&&\text{ in }\Omega,\\
		&{\rm tr}(u)={0}&&\text{ on }\partial\Omega,
	\end{aligned}\right.
\end{equation}
where $d^{k,*}$ represents the dual operator of $d^k$, and the expression for the trace operator is provided within the equations. The corresponding variational problem has the form:
\begin{equation}\label{eq:bilinear}
a(u,v)=F(v) \qquad \forall v\in V := \{w \in H(d^k;\Omega):~ \mathrm{tr}(w) = 0 \text{ on }\partial \Omega\},
\end{equation}
with $H(d^k;\Omega) := \{w \in L^2(\Omega):~ d^k w \in L^2(\Omega)\}$, 
$$
a(u,v):=(J_{\bm{\theta}}^ku,d^kv)+(\gamma u,v), \quad F(v) := (f,v).
$$

\subsection{An intrinsic Petrov-Galerkin method}
The expression of the bilinear form \eqref{eq:bilinear} reveals a distinction: for the trail functions, our focus lies on both their values and flux, while for the test functions, we emphasize their values and differentials. The discretization of the former precisely aligns with the exponentially-fitted finite element $\mathcal{S}_{1^-}^k$, while the latter corresponds to the conventional finite element $\mathcal{P}_{1^-}^k$. Therefore, we define 
\begin{equation} \label{eq:discrete-space}
\begin{aligned}
 S_h &:= \{{v}_h\in \mathcal{S}_{1^-}^k(\mathcal{T}_h):{\rm tr}(v_h)=0 \text{ on }\partial\Omega\},  \\
 V_h &:=\{v_h\in\mathcal{P}_{1^-}^k(\mathcal{T}_h):{\rm tr}(v_h)=0 \text{ on }\partial\Omega\}.
 \end{aligned}
 \end{equation}
 
 We define the bilinear form as follows:
\begin{equation}
	\label{eq:globalbilinear}
	a_h(u_h,v_h) : =\sum_{T\in\mathcal{T}_h} \underbrace{(J_{\bm{\theta},T}^ku_h,d^kv_h)_T}_{:=a_T(u_h,v_h)} +(\gamma u_h,v_h).
\end{equation}
Then, the corresponding discrete variational problem using a {\it Petrov-Galerkin method} can be stated as follows: Find ${u}_h\in S_h$ such that
\begin{equation} \label{eq:discretevration}
	a_h({u}_h,{v}_h) = F({v}_h), \quad \forall {v}_h\in V_h.
\end{equation}
The standard energy norm in $V$ is defined as:
\begin{equation}
	\label{eq:standardnorm}
	\|{v}\|_{H(d^k),\Omega}:=(\|d^kv\|_{0,\Omega}^2+\|v\|_{0,\Omega}^2)^{1/2},\quad \forall v\in V.
\end{equation}
For a function $w_h\in S_h$, we utilize its discrete flux to define the following discrete energy norm in $S_h$:
\begin{equation}
	\label{eq:discretenorm}
	\|{w_h}\|_{\mathcal{S}^k,h}:=\big(\sum_{T\in\mathcal{T}_h} \|w_h\|_{\mathcal{S}^k,T}^2)^{1/2},\quad\forall w_h \in S_h.
\end{equation}
where $\|w_h\|_{\mathcal{S}^k,T} ^2:= \|J^k_{\bm{\theta},T}w_h\|_{0,T}^2+\|w_h\|_{0,T}^2$.

Subsequently, we will present an analysis of the Petrov-Galerkin formulation \eqref{eq:discretevration}. 
We firstly provide some local error estimates associated with the exponentially-fitted function spaces $\mathcal{S}_{1^-}^k$. 
Under the well-posedness of the model problems, we proceed to demonstrate the well-posedness of the discrete problems, accompanied by a comprehensive examination of their corresponding convergence properties.

\subsection{Local error estimates}

Let us begin by briefly examining the scaling argument of the exponentially-fitted finite element spaces, as these spaces are not exclusively composed of polynomials. Consider the reference element $\hat{T}$, and let the mapping $F: \hat{T} \to T$ be defined as $\bm{x} = F(\hat{\bm{x}}) := B \hat{\bm{x}} + \bm{b}$. Here, the reference velocity is characterized by $\hat{\bm{\beta}}:= B^T \bm{\beta} \circ F$.
\begin{itemize}
\item $\mathcal{S}_{1^-}^0$. For functions $\hat{\phi}(\hat{\bm{x}})$ and $\hat{\bm{j}}(\hat{\bm{x}})$ defined on $\hat{T}$, we define $\phi(\bm{x})$ and $\bm{j}(\bm{x})$ on $T$ by
\begin{equation} \label{eq:scaling-grad}
\phi := \hat{\phi}\circ F^{-1}, \quad \bm{j} := B^{-T} \hat{\bm{j}} \circ F^{-1}.
\end{equation}
Observing that $\sigma_i = \bm{\beta} \cdot \bm{l}_i = \bm{\beta} \cdot (B\hat{\bm{l}}_i) = (B^T\bm{\beta}) \cdot \hat{\bm{l}}_i$, the left-hand side of Problem \ref{problem:grad} yields
$$
\begin{pmatrix}
		\bm{l}_1^\top &  B_1^\varepsilon(\sigma_1)\\
		\bm{l}_2^\top &  B_1^\varepsilon(\sigma_2)\\
		\bm{l}_3^\top &  B_1^\varepsilon(\sigma_3)\\
		\bm{l}_4^\top &  B_1^\varepsilon(\sigma_4)
\end{pmatrix}
\begin{pmatrix}
	\bm{j}_1^V \\\phi_1^V
\end{pmatrix} =
\begin{pmatrix}
		\hat{\bm{l}}_1^\top &  B_1^\varepsilon(\hat{\sigma}_1)\\
		\hat{\bm{l}}_2^\top &  B_1^\varepsilon(\hat{\sigma}_2)\\
		\hat{\bm{l}}_3^\top &  B_1^\varepsilon(\hat{\sigma}_3)\\
		\hat{\bm{l}}_4^\top &  B_1^\varepsilon(\hat{\sigma}_4)
\end{pmatrix}
\begin{pmatrix}
	B^T\bm{j}_1^V \\ \phi_1^V
\end{pmatrix}.
$$ 
Therefore, for $\mathcal{S}_{1^-}^0$, the basis and its flux on element $T$ can be given by the transformation \eqref{eq:scaling-grad} when referred back to the reference.

\item $\mathcal{S}_{1^-}^1$. For functions $\hat{\bm{\phi}}(\hat{\bm{x}})$ and $\hat{\bm{j}}(\hat{\bm{x}})$ defined on $\hat{T}$, we define $\bm{\phi}(\bm{x})$ and $\bm{j}(\bm{x})$ on $T$ by
\begin{equation} \label{eq:scaling-curl}
\bm{\phi} := B^{-T} \hat{\bm{\phi}} \circ F^{-1}, \quad \bm{j} = (\det B)^{-1} B \hat{\bm{j}} \circ F^{-1}.
\end{equation}
Observing that the cross product adheres to the subsequent identity: 
$$ 
(B\hat{\bm{l}}_i) \times (B\hat{\bm{l}}_i) = (\det B) B^{-T} (\hat{\bm{l}}_i \times \hat{\bm{l}}_j).
$$ 
Consequently, it becomes evident from Problem \ref{problem:curl} that, in the context of $\mathcal{S}_{1^-}^1$, the basis and its functions on element $T$ can be elucidated by the transformation \eqref{eq:scaling-curl} when referred back to the reference.

\item $\mathcal{S}_{1^-}^2$. For functions $\hat{\bm{\phi}}(\hat{\bm{x}})$ and $\hat{j}(\hat{\bm{x}})$ defined on $\hat{T}$, we define $\bm{\phi}(\bm{x})$ and $j(\bm{x})$ on $T$ by
\begin{equation} \label{eq:scaling-div}
\bm{\phi} := (\det B)^{-1}B \hat{\phi} \circ F^{-1}, \quad j := (\det B)^{-1} \hat{j} \circ F^{-1}. 
\end{equation}
Similarly, from Problem \ref{problem:div}, it is evident that, for $\mathcal{S}_{1^-}^2$, it corresponds to the transformation from the reference element to $T$.
\end{itemize}

For the sake of uniform notation and simplicity, we employ $\phi_S^k$ and $j_S^k$ to denote the basis function and discrete flux corresponding to the sub-simplex $S$ in $\mathcal{V}_T$ (for $k=0$), $\mathcal{E}_T$ (for $k=1$), or $\mathcal{F}_T$ (for $k=2$).

For the basis functions and their discrete fluxes on the reference element, it is reasonable to assume their boundedness, i.e., there exists a constant $\hat{C}_b$ such that:
\begin{equation} \label{eq:bound-basis}
\|\hat{\phi}_{\hat{S}}^k\|_{0,\infty, \hat{T}} \leq \hat{C}_b, \quad 
\|\hat{j}_{\hat{S}}^k\|_{0,\infty, \hat{T}} \leq \hat{C}_b.
\end{equation}
Here, $\hat{C}_b$ may depend on $\varepsilon$ and $\bm{\beta}$. Given a prescribed $\varepsilon > 0$, the the Bernoulli function with respect to $\bm{\beta}$ is a continuous positive function. Therefore, when $\bm{\beta}$ is bounded, the bound $\hat{C}_b$ above can be uniform with respect to $\bm{\beta}$. However, as the Bernoulli function approaches zero from the positive side as $\varepsilon \to 0^+$, while its limit exists (implying the meaningfulness of the linear system for basis functions and flux), its overall behavior remains only non-negative. As a result, establishing the $\varepsilon$-uniform boundedness of the corresponding basis functions cannot be inferred solely through rudimentary analysis. Despite our belief and the observed $\varepsilon$-uniform boundedness in the earlier basis function illustrations (see Figures \ref{fig:curlbasis1} and \ref{fig:divbasis1}), meticulous analysis and verification are still required. Nevertheless, the bound \eqref{eq:bound-basis} does exist, where the constant $\hat{C}_b$ should at most depend on $\varepsilon$.

Utilizing the aforementioned scaling argument and the boundedness of the basis functions, we will establish several local error estimates.
\begin{lemma}[approximation property]\label{lemma:approxinterpolate}
	For any $T\in\mathcal{T}_h$, if $g\in W^{1,p}(T)$ and $p>3$, we have
	\begin{equation}
		\|g-\Pi_{\bm{\theta},T}^k g \|_{0,s,T} \lesssim
		\hat{C}_{b}C(p)h_T^{1+3({1\over s}-{1\over p})}|g|_{1,p,T} \qquad 1\leq s \leq
		\infty.
	\end{equation}
	Here, $C(p) \eqsim \max \{1, (p-3)^{-\sigma}\}$ where $\sigma$ is a
	positive number determined by Sobolev embedding.
\end{lemma}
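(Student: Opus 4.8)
The plan is to follow the classical scaling-plus-Bramble--Hilbert paradigm, adapted to the non-polynomial, weighted setting. First I would transport everything to the reference element $\hat T$ through the transforms \eqref{eq:scaling-grad}--\eqref{eq:scaling-div}. The crucial preliminary observation is that the weighted interpolation commutes with these transforms: writing $\bm x = B\hat{\bm x} + \bm b$, one has $E_{\bm\theta}(\bm x) = e^{\bm\theta\cdot\bm b}E_{\hat{\bm\theta}}(\hat{\bm x})$ with $\hat{\bm\theta} = B^\top\bm\theta$, and the constant factor $e^{\bm\theta\cdot\bm b}$ appears identically in the numerator and denominator of every coefficient in \eqref{eq:weightinterpolate}. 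Since the degrees of freedom (point values for $k=0$, covariant edge integrals for $k=1$, contravariant face integrals for $k=2$) are exactly preserved by the corresponding Piola maps, one checks that $\Pi_{\bm\theta,T}^k g$ is precisely the push-forward of $\Pi_{\hat{\bm\theta},\hat T}^k\hat g$. Consequently it suffices to prove the estimate on $\hat T$ and then track the Jacobian factors.

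On the reference element I would argue in two steps. Step one is a stability bound. Because $p>3$, Morrey's embedding $W^{1,p}(\hat T)\hookrightarrow C^0(\hat T)$ gives $\|\hat g\|_{0,\infty,\hat T}\lesssim C(p)\|\hat g\|_{1,p,\hat T}$ with $C(p)\eqsim\max\{1,(p-3)^{-\sigma}\}$; this is exactly the source of the factor $C(p)$. Each weighted degree of freedom is then bounded by $\|\hat g\|_{0,\infty,\hat T}$ times a geometric constant arising from the positive continuous weight, and combining with the basis bound \eqref{eq:bound-basis} yields $\|\Pi_{\hat{\bm\theta},\hat T}^k\hat g\|_{0,s,\hat T}\lesssim \hat C_b\,C(p)\,\|\hat g\|_{1,p,\hat T}$ uniformly in $1\le s\le\infty$. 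Step two invokes constant reproduction: constants lie in $\mathcal S_{1^-}^k(\hat T)$ (Remarks \ref{rm:approx-curl} and \ref{rm:approx-div}, together with the analogous fact for $k=0$, which follows since $(1,\bm\beta)$ solves the summed system of Problem \ref{problem:grad}), and $\Pi_{\hat{\bm\theta},\hat T}^k$ restricts to the identity there, so $\Pi\hat{\bm c}=\hat{\bm c}$. Writing $\hat g-\Pi\hat g=(\hat g-\hat{\bm c})-\Pi(\hat g-\hat{\bm c})$ with $\hat{\bm c}$ the mean of $\hat g$, and applying the Poincaré inequality $\|\hat g-\hat{\bm c}\|_{1,p,\hat T}\lesssim|\hat g|_{1,p,\hat T}$ to both terms, I obtain
$$\|\hat g-\Pi_{\hat{\bm\theta},\hat T}^k\hat g\|_{0,s,\hat T}\lesssim \hat C_b\,C(p)\,|\hat g|_{1,p,\hat T}.$$

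Finally I would scale back. Using shape-regularity ($|\det B|\eqsim h_T^3$, $\|B\|\eqsim h_T$, $\|B^{-1}\|\eqsim h_T^{-1}$), the $L^s$-norm scales like $|\det B|^{1/s}$ and the $W^{1,p}$-seminorm like $\|B^{-1}\|\,|\det B|^{1/p}$, so that the reference estimate produces the factor $h_T^{3/s}\cdot h_T^{1-3/p}=h_T^{1+3(1/s-1/p)}$; the additional $\|B\|^{\pm1}$ and $(\det B)^{\pm1}$ weights carried by the Piola maps for $k=1,2$ cancel against the matching factors in the norm transformations, leaving the same power for all $k$. The main obstacle is the reference-level analysis: because the basis functions are not polynomials, one cannot invoke a standard polynomial-reproduction argument, and the whole estimate hinges on the a priori bound \eqref{eq:bound-basis} together with a careful, $k$-dependent control of the weighted degrees of freedom by $\|\hat g\|_{0,\infty,\hat T}$, whose $p$-dependence through Morrey's embedding must be isolated to produce the sharp constant $C(p)$.
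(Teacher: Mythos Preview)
Your proposal is correct and follows essentially the same route the paper indicates: the paper's own proof merely states that the argument proceeds by the scaling transformations \eqref{eq:scaling-grad}--\eqref{eq:scaling-div}, the Sobolev embedding theorem, and the reference-element basis bound \eqref{eq:bound-basis}, deferring details to \cite[Lemma 5.1]{wu2020simplex}. Your write-up supplies precisely those details---the weight-cancellation in the DoF coefficients, the Morrey embedding producing $C(p)$, constant reproduction via Remarks \ref{rm:approx-curl}--\ref{rm:approx-div}, and the Jacobian bookkeeping---so there is nothing to add.
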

\begin{proof}
	The interpolation operator $\Pi_{{\bm{\theta}},T}^k$ shares similarities with the weighted interpolation operator introduced in the work in \cite{wu2020simplex}. The key distinction lies in the utilization of exponentially-fitted basis functions instead of polynomials. Despite this difference, the proof remains unchanged and is achieved through scaling arguments \eqref{eq:scaling-grad}--\eqref{eq:scaling-div}, the Sobolev embedding theorem, and the  boundedness $\hat{C}_{b}$ of the basis functions on a reference element given by \eqref{eq:bound-basis}. We refer to \cite[Lemma 5.1]{wu2020simplex} for a detailed proof.
\end{proof}
Through the proof of Lemma \ref{lemma:approxinterpolate}, we can obtain the following stability of $\Pi_{{\bm{\theta}},T}^k$ that is mimic to \cite[Corollary 5.2]{wu2020simplex}.
\begin{corollary}[stability]\label{coro:stableinterpol}
	For any $T\in \mathcal{T}_h$, if $w\in L^\infty(T)$, we have
	\begin{equation}
		\|\Pi_{{\bm{\theta}},T}^kw\|_{0,s,T}\lesssim \hat{C}_{b}h_T^{\frac{3}{s}}\|w\|_{0,\infty,T} \qquad 1\le s\le \infty.
	\end{equation}
\end{corollary}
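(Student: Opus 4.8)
The plan is to mimic the proof of Lemma~\ref{lemma:approxinterpolate}, but to isolate only its stability half: instead of invoking a Bramble--Hilbert argument, I would bound $\|\Pi_{\bm{\theta},T}^k w\|_{0,s,T}$ directly by summing the contributions of the individual degrees of freedom in \eqref{eq:weightinterpolate}. Two ingredients enter: (i) a pointwise bound on each weighted degree-of-freedom coefficient, and (ii) the scaling transformations \eqref{eq:scaling-grad}--\eqref{eq:scaling-div} together with the reference-element boundedness \eqref{eq:bound-basis}.

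First I would observe the structural fact that each coefficient appearing in \eqref{eq:weightinterpolate} is a positively weighted average of a trace or component of $w$ against the weight $E_{\bm{\theta}}$, multiplied by the measure of the associated sub-simplex. For $k=1$, for instance, the coefficient equals $|E_{ij}|\,\big(\int_{E_{ij}}E_{\bm{\theta}}(\bm{w}\cdot\bm{t}_{ij})\big)\big/\big(\int_{E_{ij}}E_{\bm{\theta}}\big)$; since $E_{\bm{\theta}}>0$, the ratio is a genuine average, hence bounded in modulus by $\|\bm{w}\|_{0,\infty,E_{ij}}$, which yields $|l_{ij}^1(E_{\bm{\theta}}\bm{w})/\dashint_{E_{ij}}E_{\bm{\theta}}|\lesssim h_T\|w\|_{0,\infty,T}$. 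The same reasoning gives coefficients bounded by $\|w\|_{0,\infty,T}$ for $k=0$, by $|F_i|\,\|w\|_{0,\infty,T}\lesssim h_T^2\|w\|_{0,\infty,T}$ for $k=2$, and by $|T|\,\|w\|_{0,\infty,T}\lesssim h_T^3\|w\|_{0,\infty,T}$ for $k=3$. The positivity of $E_{\bm{\theta}}$, guaranteed by the positivity of the Bernoulli functions underlying Lemmas~\ref{lemma:curlunisolve} and \ref{lm:divunisolve}, is precisely what makes these coefficients $L^\infty$-contractions and is the conceptual heart of the estimate.

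Next I would pass to the reference element. By the transformations \eqref{eq:scaling-grad}--\eqref{eq:scaling-div}, the $L^s$ norm of each physical basis function is controlled by the reference bound \eqref{eq:bound-basis} together with the Jacobian factors: for example $\|\bm{\phi}_{ij}^E\|_{0,s,T}\lesssim \|B^{-1}\|\,|\det B|^{1/s}\hat{C}_b$ for $k=1$, and $\|\phi_i^F\|_{0,s,T}\lesssim |\det B|^{-1}\|B\|\,|\det B|^{1/s}\hat{C}_b$ for $k=2$, with the analogous expressions for $k=0,3$ (and using that the fixed reference measure bounds $\|\hat{\phi}_{\hat S}^k\|_{0,s,\hat T}$ by $\hat{C}_b$ uniformly in $s$). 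Invoking shape-regularity, $\|B\|\lesssim h_T$, $\|B^{-1}\|\lesssim h_T^{-1}$, $|\det B|\eqsim h_T^3$, and combining with the coefficient bounds above, I would check that in every case the sub-simplex-measure factor carried by the coefficient exactly cancels the negative powers of $h_T$ produced by the Piola factors, leaving the common surplus $|\det B|^{1/s}\eqsim h_T^{3/s}$. Summing over the finitely many degrees of freedom and applying the triangle inequality then delivers $\|\Pi_{\bm{\theta},T}^k w\|_{0,s,T}\lesssim \hat{C}_b\,h_T^{3/s}\|w\|_{0,\infty,T}$.

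I expect the main obstacle to be bookkeeping rather than anything conceptual: one must verify, separately for each $k=0,1,2,3$, that the measure factor $h_T^{k}$ accompanying the degree of freedom balances against the Piola-scaling exponent of $\|\phi_S^k\|_{0,s,T}$, so that only the volumetric factor $h_T^{3/s}$ survives, uniformly across $s\in[1,\infty]$. Since this is exactly the computation carried out in \cite[Corollary 5.2]{wu2020simplex}, with the polynomial bases there replaced by the exponentially-fitted bases whose sole role is encapsulated in the uniform reference bound \eqref{eq:bound-basis}, I would adapt that argument verbatim; no new $\varepsilon$-dependence is introduced beyond the constant $\hat{C}_b$ already present in the statement.
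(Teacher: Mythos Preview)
Your proposal is correct and is essentially the approach the paper has in mind: the paper merely states that the corollary follows ``through the proof of Lemma~\ref{lemma:approxinterpolate}'' and mimics \cite[Corollary~5.2]{wu2020simplex}, i.e., bound the weighted DOF coefficients by $h_T^k\|w\|_{0,\infty,T}$ (using that $E_{\bm{\theta}}>0$ makes each coefficient a positively weighted average times the sub-simplex measure), then scale the basis via \eqref{eq:scaling-grad}--\eqref{eq:scaling-div} and \eqref{eq:bound-basis}. One small correction: the positivity of $E_{\bm{\theta}}=\exp(\bm{\theta}\cdot\bm{x})$ is immediate from the exponential, not from the Bernoulli functions (the implication runs the other way), but this does not affect your argument.
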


Note that for any function $w_h\in\mathcal{S}_{1^-}^k(T)$, we denote $\tilde{\Pi}_T^k$ as the local canonical interpolation operator onto $\mathcal{P}_{1^-}^k(T)$ and $\tilde{w}_h=\tilde{\Pi}_T^kw_h$. Using the properties in Lemma \ref{lemma:curldof} (DOFs of $\mathcal{S}_{1^-}^1(T)$) and Lemma \ref{lm:divdof} (DOFs of $\mathcal{S}_{1^-}^2(T)$), it is obvious that
\begin{equation} \label{eq:identification}
		w_h =\Pi_{\bm{\theta},T}^k\tilde{w}_h,
\end{equation}
which also imply that $\tilde{\Pi}_T^k\Pi_{{\bm{\theta}},T}^k|_{\mathcal{P}_{1^-}^k(T)}=Id_{\mathcal{P}_{1^-}^k(T)}$ and $\Pi_{{\bm{\theta}},T}^k\tilde{\Pi}_T^k|_{\mathcal{S}_{1^-}^k(T)}=Id_{\mathcal{S}_{1^-}^k(T)}$. Then, we have the following result for the relationship between $\|\tilde{w}_h\|_{H(d^k),T}$ and $\|w_h\|_{\mathcal{S}^k,T}$.

\begin{lemma}[relationship between energy norms]\label{lemma:normequiv}
	For any $w_h\in\mathcal{S}_{1^-}^k(T)$ with $\tilde{w}_h=\tilde{\Pi}_T^kw_h\in \mathcal{P}_{1^-}^k(T)$, it holds that
	\begin{equation} \label{eq:bound1}
		\begin{aligned}
			\|J_{\bm{\theta},T}^kw_h\|_{0,T}^2+\|w_h\|_{0,T}^2\lesssim \hat{C}_{b} ^2\left(\varepsilon^2\|d^k\tilde{w}_h\|_{0,T}^2+(1+\|\bm{\beta}\|_{0,T}^2)\|\tilde{w}_h\|_{0,T}^2\right),
		\end{aligned}
	\end{equation}
	where the hidden constant does not depend on $h$.
\end{lemma}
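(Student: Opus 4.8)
The plan is to control the two pieces of the discrete energy norm $\|w_h\|_{\mathcal{S}^k,T}^2=\|J_{\bm{\theta},T}^kw_h\|_{0,T}^2+\|w_h\|_{0,T}^2$ separately, exploiting the identification $w_h=\Pi_{\bm{\theta},T}^k\tilde w_h$ from \eqref{eq:identification} together with the commutative diagram of Theorem \ref{tm:commutativity} (used with $\bm{\theta}$ frozen to its value on $T$, as in Remark \ref{rm:theta-T}). For the zeroth-order term I would bound $\|w_h\|_{0,T}$ by combining the stability estimate of Corollary \ref{coro:stableinterpol} with a standard inverse inequality on the polynomial $\tilde w_h$: since $w_h=\Pi_{\bm{\theta},T}^k\tilde w_h$, Corollary \ref{coro:stableinterpol} with $s=2$ gives $\|w_h\|_{0,T}\lesssim \hat C_b h_T^{3/2}\|\tilde w_h\|_{0,\infty,T}$, and the inverse inequality $\|\tilde w_h\|_{0,\infty,T}\lesssim h_T^{-3/2}\|\tilde w_h\|_{0,T}$ on $\mathcal{P}_{1^-}^k(T)$ yields $\|w_h\|_{0,T}\lesssim \hat C_b\|\tilde w_h\|_{0,T}$. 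Squaring produces the reaction-type contribution on the right-hand side of \eqref{eq:bound1}.

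For the discrete-flux term, the key is to rewrite $J_{\bm{\theta},T}^kw_h=J_{\bm{\theta},T}^k\Pi_{\bm{\theta},T}^k\tilde w_h=\Pi_{\bm{\theta},T}^{k+1}J_{\bm{\theta}}^k\tilde w_h$ by commutativity, and then split the continuous flux of the polynomial as $J_{\bm{\theta}}^k\tilde w_h=\varepsilon d^k\tilde w_h+\bm{\beta}\wedge\tilde w_h$, where $\bm{\beta}\wedge\tilde w_h$ denotes $\bm{\beta}\tilde w_h$, $\bm{\beta}\times\tilde w_h$, $\bm{\beta}\cdot\tilde w_h$ for $k=0,1,2$ respectively. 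The crucial observation is that for these lowest-order spaces $d^k\tilde w_h$ is a \emph{constant} on $T$, and constants belong to $\mathcal{S}_{1^-}^{k+1}(T)$ (Remarks \ref{rm:approx-curl}, \ref{rm:approx-div}, and the characterization of $\mathcal{S}_{1^-}^3$), so the weighted interpolation reproduces it exactly: $\Pi_{\bm{\theta},T}^{k+1}(\varepsilon d^k\tilde w_h)=\varepsilon d^k\tilde w_h$. Hence the diffusive part contributes precisely $\varepsilon^2\|d^k\tilde w_h\|_{0,T}^2$ with no auxiliary constant.

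For the remaining convective part $\Pi_{\bm{\theta},T}^{k+1}(\bm{\beta}\wedge\tilde w_h)$ I would again invoke Corollary \ref{coro:stableinterpol} ($s=2$) and the inverse inequality, obtaining $\|\Pi_{\bm{\theta},T}^{k+1}(\bm{\beta}\wedge\tilde w_h)\|_{0,T}\lesssim \hat C_b h_T^{3/2}\|\bm{\beta}\|_{0,\infty,T}\|\tilde w_h\|_{0,\infty,T}\lesssim \hat C_b\|\bm{\beta}\|_{0,\infty,T}\|\tilde w_h\|_{0,T}$, where the $h_T^{3/2}$ weights from the stability estimate and from the inverse inequality cancel exactly to leave an $h$-independent constant. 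Collecting the two parts with the triangle inequality and $(a+b)^2\le 2a^2+2b^2$, then adding the bound for $\|w_h\|_{0,T}^2$ and absorbing the $O(1)$ factors into $\hat C_b^2$ (which is bounded below since the constants lie in the reference space, forcing $\hat C_b\ge c>0$), gives \eqref{eq:bound1}.

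The main obstacle is the flux term, and it is twofold. First, the clean extraction of the factor $\varepsilon^2$ in front of $\|d^k\tilde w_h\|_{0,T}^2$ cannot be produced by a naive scaling argument relying on the boundedness \eqref{eq:bound-basis} alone, since that estimate does not see the $\varepsilon$-weighting of the flux; it is precisely the commutative diagram combined with the exact reproduction of the constant $d^k\tilde w_h$ that supplies it. This is what forces the use of the locally-constant-$\bm{\theta}$ setting of Theorem \ref{tm:commutativity}, which in the variable-$\bm{\beta}$ case I would justify through the pointwise frozen-coefficient viewpoint of Remark \ref{rm:theta-T}. Second, the convective estimate naturally yields the $L^\infty$ bound $\|\bm{\beta}\|_{0,\infty,T}$ on the velocity, via $\|\bm{\beta}\wedge\tilde w_h\|_{0,\infty,T}\le\|\bm{\beta}\|_{0,\infty,T}\|\tilde w_h\|_{0,\infty,T}$, so matching the velocity factor written in \eqref{eq:bound1} amounts to interpreting that norm accordingly (the average of $|\bm{\beta}|^2$ over $T$ is the quantity that genuinely appears).
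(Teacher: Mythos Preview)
Your approach is essentially the same as the paper's: both pass from $w_h$ to $\tilde w_h$ via \eqref{eq:identification}, invoke commutativity pointwise (freezing $\bm\theta$ at a point of $\bar T$ as in Remark~\ref{rm:theta-T}) to trade $J_{\bm\theta,T}^k$ for $\Pi_{\bm\theta,T}^{k+1}J_{\bm\theta}^k$, then use the stability of Corollary~\ref{coro:stableinterpol} and a polynomial inverse inequality. The only variation is that the paper applies the stability bound directly to the full continuous flux $J_{\bm\theta(\bm x_T)}^k\tilde w_h$ rather than first peeling off the diffusive part via exact constant reproduction; in particular your remark that the $\varepsilon$ factor ``cannot be produced'' without the exact reproduction is not quite right, since the paper obtains $\varepsilon\|d^k\tilde w_h\|_{0,\infty,T}$ simply from $\|J_{\bm\theta(\bm x_T)}^k\tilde w_h\|_{0,\infty,T}\le \varepsilon\|d^k\tilde w_h\|_{0,\infty,T}+\|\bm\beta\|_{0,\infty,T}\|\tilde w_h\|_{0,\infty,T}$ after one application of stability. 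Both routes yield $\|\bm\beta\|_{0,\infty,T}$ rather than $\|\bm\beta\|_{0,T}$, as you correctly flag.
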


\begin{proof}
It is readily seen from \eqref{eq:identification} that
	$$
	\begin{aligned}
		\|J_{\bm{\theta},T}^kw_h\|_{0,T} \lesssim h_T^{\frac{3}{2}} \|J_{\bm{\theta},T}^k w_h\|_{0,\infty,T}
		= h_T^{\frac32}\|J_{\bm{\theta},T}^k \Pi_{{\bm{\theta}},T}^k\tilde{w}_h\|_{0,\infty,T}.
	\end{aligned}
	$$	
	Assuming that the above $L^\infty$ norm is obtained on $\bm{x}_T\in \bar{T}$, and we have 
	$$
	J_{\bm{\theta},T}^k\Pi_{{\bm{\theta}},T}^k\tilde{w}_h(\bm{x}_T)=J_{\bm{\theta}(\bm{x}_T),T}^k\Pi_{{\bm{\theta}(\bm{x}_T)},T}^k\tilde{w}_h(\bm{x}_T)=\Pi_{{\bm{\theta}(\bm{x}_T)},T}^{k+1}J_{\bm{\theta}(\bm{x}_T)}^k\tilde{w}_h(\bm{x}_T)$$ 
	since $\tilde{w}_h$ shares the same DOF-condition and the discrete flux is defined pointwisely and the commutativity holds for a constant scaled convection $\bm{\theta}(\bm{x}_T)$. Then, utilizing the stability of $\Pi_{\bm{\theta}(\bm{x}_T),T}^{k+1}$ in Corollary \ref{coro:stableinterpol}, it holds that
	$$
	\begin{aligned}
		h^{\frac{3}{2}}\|J_{\bm{\theta},T}^k\Pi_{{\bm{\theta}},T}^k\tilde{w}_h\|_{0,\infty,T}
		&=h^{\frac{3}{2}}|\Pi_{{\bm{\theta}(\bm{x}_T)},T}^{k+1}J_{\bm{\theta}(\bm{x}_T)}^k\tilde{w}_h(\bm{x}_T)|\\
		&\lesssim \hat{C}_{b}h^{\frac{3}{2}}\|J_{\bm{\theta}(\bm{x}_T)}^k\tilde{w}_h\|_{0,\infty,T}\\
		&\le\hat{C}_{b} h^{\frac{3}{2}}( \varepsilon\|d^k\tilde{w}_h\|_{0,\infty,T}+\|\bm{\beta}\|_{0,\infty,T}\|\tilde{w}_h\|_{0,\infty,T}).
	\end{aligned}
	$$
	Recognizing that both $d^k\tilde{w}_h$ and $\tilde{w}_h$ are polynomials, and leveraging the standard inverse inequality, we deduce:
	$$
	\begin{aligned}
		\|J_{\bm{\theta},T}^kw_h\|_{0,T}&\lesssim\hat{C}_{b}\left( \varepsilon\|d^k\tilde{w}_h\|_{0,T}+\|\bm{\beta}\|_{0,\infty,T}\|\tilde{w}_h\|_{0,T}\right).
	\end{aligned}
	$$
	Likewise, considering the term $\|w_h\|_{0,T}$, we obtain:
	$$
	\|w_h\|_{0,T}=\|\Pi_{{\bm{\theta}},T}^k\tilde{w}_h\|_{0,T}\lesssim\hat{C}_{b}  h^{\frac{3}{2}}\|\tilde{w}_h\|_{0,\infty,T} \lesssim \hat{C}_{b} \|\tilde{w}_h\|_{0,T}.
	$$
	Thus, we arrive at the desired result \eqref{eq:bound1}.

\end{proof}

To utilize the commutativity of the diagram, we also need the estimate of flux between $\Pi_{\bm{\theta},T}^k$ and  $\Pi_{{\bm{\theta}(\bm{x}_T)},T}^k$ for a given $\bm{x}_T\in \bar{T}$, which is analogue to the result in \cite[Lemma 5.3]{wu2020simplex}.
\begin{lemma}[flux difference]\label{lemma:diffinter}
	For any $\bm{x}_T\in \bar{T}$, if ${w}\in {W}^{1,p}(T)$, $p>3$, and $h_T
	\lesssim \|\bm{\theta}\|_{1,\infty,T}^{-1}$, we have 
	\begin{equation}
			\|J_{\bm{\theta}(\bm{x}_T),T}^k\Pi_{{\bm{\theta}}(\bm{x}_T),T}^k(I-
		{\Pi}_{\bm{\theta},T}^k){w}\|_{0,\infty,T} \lesssim \hat{C}_{b} C(p)h_T^{1 
			- {3\over p}} |\bm{\theta}|_{1,\infty,T}
		|{w}|_{1,p,T}.
	\end{equation}
	
\end{lemma}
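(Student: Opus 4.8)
The plan is to evaluate the discrete flux explicitly at the level of degrees of freedom, reduce each coefficient to a perturbation of weighted averages over a sub-simplex, and estimate that perturbation. Write $\bm{\theta}_0 := \bm{\theta}(\bm{x}_T)$ for the frozen field. First I observe that the case $k=0$ is trivial: since $\Pi_{\bm{\theta},T}^0 w$ reproduces the vertex values of $w$, the function $(I-\Pi_{\bm{\theta},T}^0)w$ vanishes at every vertex, so $\Pi_{\bm{\theta}_0,T}^0(I-\Pi_{\bm{\theta},T}^0)w\equiv 0$ and the bound holds with a zero left-hand side. For $k=1,2$ I would use that the weighted interpolant carries the weighted degrees of freedom $l_S^k(\Pi_{\bm{\theta}_0,T}^k v)=l_S^k(E_{\bm{\theta}_0}v)/\omega_S^0$, with $\omega_S^0 := \dashint_S E_{\bm{\theta}_0}$, so that by the definition of the discrete flux \eqref{eq:discretediff},
\[
J_{\bm{\theta}_0,T}^k\Pi_{\bm{\theta}_0,T}^k(I-\Pi_{\bm{\theta},T}^k)w = \sum_S \frac{l_S^k\big(E_{\bm{\theta}_0}(w-\Pi_{\bm{\theta},T}^k w)\big)}{\omega_S^0}\, j_S^{k},
\]
the sum running over edges $(k=1)$ or facets $(k=2)$, with $j_S^k$ the auxiliary flux of the $\bm{\theta}_0$-space.

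The crux is the coefficient in this sum, which I claim equals $|S|$ times a difference of two weighted averages over the $k$-dimensional sub-simplex $S$. By the DOF identities of Lemma \ref{lemma:curldof} and Lemma \ref{lm:divdof}, the tangential $(k=1)$ or normal $(k=2)$ trace of every basis function on $S$ is constant; hence the relevant scalar component of $\Pi_{\bm{\theta},T}^k w$ on $S$ equals the $E_{\bm{\theta}}$-weighted average $\langle g\rangle_{E_{\bm{\theta}}} := \big(\int_S E_{\bm{\theta}}\,g\big)/\big(\int_S E_{\bm{\theta}}\big)$ of $g := w\cdot\bm{t}_{st}$ (resp.\ $g := w\cdot\bm{n}_j$), which is independent of position on $S$. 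A short computation then gives
\[
\frac{l_S^k\big(E_{\bm{\theta}_0}(w-\Pi_{\bm{\theta},T}^k w)\big)}{\omega_S^0} = |S|\big(\langle g\rangle_{E_{\bm{\theta}_0}} - \langle g\rangle_{E_{\bm{\theta}}}\big),
\]
which vanishes when $\bm{\theta}\equiv\bm{\theta}_0$, confirming that the expression only measures the non-constancy of $\bm{\theta}$. This reduction is the main obstacle, since it is exactly the step that converts the non-polynomial discrete fluxes into clean weighted-average differences and relies crucially on the constancy of the traces furnished by the DOF lemmas.

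Next I would estimate the weighted-average difference. Using the elementary identity, valid for positive weights $\mu_0,\mu$,
\[
\langle g\rangle_{\mu_0}-\langle g\rangle_{\mu} = \frac{1}{\int_S\mu_0}\int_S \big(g-\langle g\rangle_{\mu}\big)\,\frac{\mu_0-\mu}{\mu_0}\,\mu_0,
\]
I bound $|\langle g\rangle_{\mu_0}-\langle g\rangle_{\mu}|\le \|g-\langle g\rangle_\mu\|_{0,\infty,S}\,\big\|(\mu_0-\mu)/\mu_0\big\|_{0,\infty,S}$. For the first factor, $p>3$ and the Morrey embedding $W^{1,p}(T)\hookrightarrow C^{0,1-3/p}(\bar T)$ give $\|g-\langle g\rangle_\mu\|_{0,\infty,S}\lesssim C(p)h_T^{1-3/p}|w|_{1,p,T}$, with $C(p)\eqsim\max\{1,(p-3)^{-\sigma}\}$ the embedding constant. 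For the second factor, the hypothesis $h_T\lesssim\|\bm{\theta}\|_{1,\infty,T}^{-1}$ yields $|\bm{\theta}(\bm{y})-\bm{\theta}_0|\le|\bm{\theta}|_{1,\infty,T}h_T$ on $\bar T$, whence the weights $E_{\bm{\theta}},E_{\bm{\theta}_0}$ satisfy $\big\|(E_{\bm{\theta}_0}-E_{\bm{\theta}})/E_{\bm{\theta}_0}\big\|_{0,\infty,S}\lesssim|\bm{\theta}|_{1,\infty,T}h_T$; the ratio form is essential here because the exponential weights themselves may be large.

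Finally I would assemble the bound. The auxiliary fluxes are controlled by scaling: from \eqref{eq:scaling-curl}--\eqref{eq:scaling-div} and the reference boundedness \eqref{eq:bound-basis}, $\|j_S^k\|_{0,\infty,T}\lesssim\hat{C}_b h_T^{-(k+1)}$ (i.e.\ $h_T^{-2}$ for $k=1$, $h_T^{-3}$ for $k=2$). Combining this with $|S|\eqsim h_T^{k}$ and the two factors above, each summand is
\[
\lesssim \hat{C}_b\, h_T^{-(k+1)}\cdot h_T^{k}\cdot C(p)h_T^{1-3/p}|w|_{1,p,T}\cdot|\bm{\theta}|_{1,\infty,T}h_T = \hat{C}_b\, C(p)\,h_T^{1-3/p}|\bm{\theta}|_{1,\infty,T}|w|_{1,p,T},
\]
the powers of $h_T$ cancelling for both $k=1$ and $k=2$; summing over the finitely many sub-simplices gives the claim. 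I would stress that, unlike in the polynomial setting, every scaling and boundedness step must route through the reference constant $\hat{C}_b$ of \eqref{eq:bound-basis} (which may depend on $\varepsilon$) rather than through any polynomial structure; apart from this, the argument parallels \cite[Lemma 5.3]{wu2020simplex}.
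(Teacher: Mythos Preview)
Your proposal is correct and follows essentially the same route as the paper: both expand $J_{\bm{\theta}_0,T}^k\Pi_{\bm{\theta}_0,T}^k$ in terms of the discrete fluxes $j_S^k$, use the DOF Lemmas \ref{lemma:curldof}/\ref{lm:divdof} to reduce the coefficients to the difference $\frac{l_S^k(E_{\bm{\theta}_0}w)}{\dashint_S E_{\bm{\theta}_0}}-\frac{l_S^k(E_{\bm{\theta}}w)}{\dashint_S E_{\bm{\theta}}}$, and combine this with the scaling $\|j_S^k\|_{0,\infty,T}\lesssim \hat C_b h_T^{-(k+1)}$.

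The one genuine difference lies in how the seminorm $|w|_{1,p,T}$ is obtained. The paper first bounds the coefficient difference by $h_T^{k+1}|\bm{\theta}|_{1,\infty,T}\|w\|_{0,\infty,T}$ and then invokes the Bramble--Hilbert lemma, using that constants lie in $\mathcal{S}_{1^-}^k(T)$ (Remarks \ref{rm:approx-curl}/\ref{rm:approx-div}) so that $(I-\Pi_{\bm{\theta},T}^k)$ annihilates constants. Your argument instead rewrites the coefficient as $|S|\big(\langle g\rangle_{E_{\bm{\theta}_0}}-\langle g\rangle_{E_{\bm{\theta}}}\big)$ and bounds it via the identity $\langle g\rangle_{\mu_0}-\langle g\rangle_\mu=\dashint_S(g-\langle g\rangle_\mu)(\mu_0-\mu)/\dashint_S\mu_0$, so that Morrey's embedding acting on the oscillation $g-\langle g\rangle_\mu$ delivers the seminorm directly. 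This is a legitimate and slightly more elementary variant: it bypasses the Bramble--Hilbert step and the need for constant preservation, at the cost of the extra weighted-mean identity. Both approaches use the smallness assumption $h_T\lesssim\|\bm{\theta}\|_{1,\infty,T}^{-1}$ in the same place, namely to control the relative weight perturbation $(E_{\bm{\theta}_0}-E_{\bm{\theta}})/E_{\bm{\theta}_0}$.
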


\begin{proof}
	It is important to observe that the disparity between these two interpolations can be attributed to the coefficient of the flux value:
	$$
	\begin{aligned}
	J_{\bm{\theta}(\bm{x}_T),T}^k\Pi_{{\bm{\theta}}(\bm{x}_T),T}^kw &= \sum_S\frac{l_S^k(E_{\bm{\theta}(\bm{x}_T)}w)}{\dashint_S E_{\bm{\theta}(\bm{x}_T)}}j_S^k, \\
	J_{\bm{\theta}(\bm{x}_T),T}^k\Pi_{{\bm{\theta}}(\bm{x}_T),T}^k{\Pi}_{\bm{\theta},T}^kw &= \sum_S\frac{l_S^k(E_{\bm{\theta}}w)}{\dashint_S E_{\bm{\theta}}}j_S^k,
	\end{aligned}
	$$
	where $S\in\mathcal{V}_T$ for $k=0$, $S\in\mathcal{E}_T$ for $k=1$, $S\in\mathcal{F}_T$ for $k=2$ and  $j_S^k$ is the corresponding discrete flux of the local exponentially-fitted FE space  with the same notation mentioned before.
	
	 A similar induction for the difference of the coefficient, as presented in \cite[Lemma 5.3]{wu2020simplex} (There appears to be a typographical error in the first equation on Page 898 of \cite{wu2020simplex}, as a term of $|S|$ seems to be omitted), shows that
	 $$
	 \left|
	 \frac{l_S^k(E_{\bm{\theta}}w)}{\dashint_S E_{\bm{\theta}}} - 
	 \frac{l_S^k(E_{\bm{\theta}(\bm{x}_T)}w)}{\dashint_S E_{\bm{\theta}(\bm{x}_T)}}\right| 
	 \lesssim h_T |\bm{\theta}|_{1,\infty,T}\|w\|_{0,\infty,S}|S|\lesssim h_T^{k+1} |\bm{\theta}|_{1,\infty,T}\|w\|_{0,\infty,T}.
	 $$
	 Note that for any $w_h \in \mathcal{S}_{1^-}^k(T)$, $(I-\Pi_{{\bm{\theta}},T}^k)w = (I-\Pi_{{\bm{\theta}},T}^k)(w-w_h)$, and a constant is encompassed within $\mathcal{S}_{1^-}^k(T)$ due to Remarks \ref{rm:approx-curl} and \ref{rm:approx-div}. This entails the application of the Bramble-Hilbert lemma, which asserts that:
	  $$
	  \left|
	  \frac{l_S^k(E_{\bm{\theta}}w)}{\dashint_S E_{\bm{\theta}}} - 
	  \frac{l_S^k(E_{\bm{\theta}(\bm{x}_T)}w)}{\dashint_S E_{\bm{\theta}(\bm{x}_T)}}\right| 
	  \lesssim  C(p)h_T^{k+2-{3\over p}} |\bm{\theta}|_{1,\infty,T}\|w\|_{1,p,T}.
	  $$
	  Through scaling argument \eqref{eq:scaling-grad}--\eqref{eq:scaling-div}, we can obtain the boundedness of the discrete flux \eqref{eq:bound-basis} on $T$ that 
	  $$
	  \|j_S^k\|_{0,\infty,T}\lesssim \hat{C}_{b}h_T^{-(k+1)}.
	  $$
	  Thus, we arrive at the desired result.
\end{proof}	

\subsection{Error analysis}
Initially, we present the subsequent assumption concerning the well-posedness of the convection-diffusion problems  \eqref{eq:bilinear}.
\begin{assumption}[well-posedness]
	\label{ass:infsupa} 
	There exists a constant $c_0$ (which may depend on $\varepsilon,\bm{\beta},\gamma$) such that
	\begin{equation}
		\label{eq:infsupa}
		\inf_{u\in V}\sup_{v\in V} \frac{a(u,v)}{\|u\|_{H(d^k),\Omega}\|v\|_{H(d^k),\Omega}}=c_0>0.
	\end{equation}
\end{assumption}
\begin{remark}
	The above assumption holds for $H(\mathrm{grad})$
	convection-diffusion problem by using the weak maximum principle (cf.
	\cite[Section 8.1]{gilbarg1977elliptic}) and Fredholm alternative
	theory (cf. \cite[Theorem 4, pp. 303]{evans2010partial}). A sufficient condition
	for the above assumption is that $4\varepsilon\gamma(\bm{x}) \geq |\bm{\beta}(\bm{x})|_{l^2}^2$ 
	for all $\bm{x} \in \Omega$.
\end{remark}

%
\begin{lemma}[consistency error]\label{lemma:diff}
	For any $T\in\mathcal{T}_h$, assume that $J_{{\bm{\theta}}}^k{w}\in {W}^{1,p}(T)$ and ${w}\in W^{1,p}(T)$ where $p>3$ and $h_T\lesssim\|\bm{\theta}\|_{1,\infty,T}^{-1}$. Then the following inequality holds
	\begin{equation}
		|a({w},{v}_h)-a_h(\Pi_{\bm{\theta},h}^k{w},{v}_h)|\lesssim \Theta(\varepsilon,\bm{\theta},\gamma,{w})h\|{v}_h\|_{H(d^k),\Omega}\qquad \forall {v}_h\in V_h,
	\end{equation} 
	where 
	\begin{equation} \label{eq:theta}
		\begin{aligned}
			&\Theta(\varepsilon,\bm{\theta},{\gamma},{w}):=\\
			&\hat{C}_{b}C(p)\bigg\{\sum_{T\in\mathcal{T}_h}\Big( h_T^{3(\frac{1}{2}-\frac{1}{p})}(|J_{{\bm{\theta}}}^k{w}|_{1,p,T}+ |\bm{\beta}|_{1,\infty,T}\|w\|_{0,p,T}+ h_T|\bm{\beta}|_{1,\infty,T}|w|_{1,p,T})\Big)^2\\
			&~+\sum_{T\in\mathcal{T}_h}\Big(|\bm{\theta}|_{1,\infty,T} h_T^{3(\frac{1}{2}-\frac{1}{p})}|w|_{1,p,T}\Big)^2+\Big(\|\gamma\|_{0,\infty,T} h_T ^{3(\frac{1}{2}-\frac{1}{p})}|{w}|_{1,p,T}\Big)^2\bigg\}^{\frac{1}{2}}.
		\end{aligned}	
	\end{equation}
\end{lemma}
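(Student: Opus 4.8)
The plan is to split the consistency error into a reaction part and a flux part,
\[
a(w,v_h)-a_h(\Pi_{\bm{\theta},h}^kw,v_h)=\sum_{T\in\mathcal{T}_h}\big(J_{\bm{\theta}}^kw-J_{\bm{\theta},T}^k\Pi_{\bm{\theta},T}^kw,\,d^kv_h\big)_T+\big(\gamma(w-\Pi_{\bm{\theta},h}^kw),\,v_h\big),
\]
and to estimate the two contributions separately. The reaction part is the routine one: by H\"older and Cauchy--Schwarz it is bounded termwise by $\|\gamma\|_{0,\infty,T}\|w-\Pi_{\bm{\theta},T}^kw\|_{0,T}\|v_h\|_{0,T}$, and Lemma \ref{lemma:approxinterpolate} with $s=2$ gives $\|w-\Pi_{\bm{\theta},T}^kw\|_{0,T}\lesssim \hat{C}_bC(p)h_T^{1+3(\frac12-\frac1p)}|w|_{1,p,T}$. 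Pulling out one factor $h$ (using $h_T\le h$) and applying a discrete Cauchy--Schwarz over $T$ then produces exactly the $\|\gamma\|_{0,\infty,T}$ term of $\Theta$, with $\|v_h\|_{0,\Omega}\le\|v_h\|_{H(d^k),\Omega}$.

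For the flux part I localise by Cauchy--Schwarz, so the task reduces to bounding $\|J_{\bm{\theta}}^kw-J_{\bm{\theta},T}^k\Pi_{\bm{\theta},T}^kw\|_{0,T}$ on each element and then pairing with $\|d^kv_h\|_{0,T}\le\|v_h\|_{H(d^k),T}$. I would first insert the weighted interpolant of the exact flux and write
\[
J_{\bm{\theta}}^kw-J_{\bm{\theta},T}^k\Pi_{\bm{\theta},T}^kw=(I-\Pi_{\bm{\theta},T}^{k+1})J_{\bm{\theta}}^kw+\big(\Pi_{\bm{\theta},T}^{k+1}J_{\bm{\theta}}^kw-J_{\bm{\theta},T}^k\Pi_{\bm{\theta},T}^kw\big).
\]
The first term is the interpolation error of the flux and is controlled by Lemma \ref{lemma:approxinterpolate}, yielding the $|J_{\bm{\theta}}^kw|_{1,p,T}$ contribution to $\Theta$. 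The second, commutator-type term is the heart of the argument, and here the central device is the commutative diagram of Theorem \ref{tm:commutativity}, which is exact only for a constant field; I therefore freeze the convection at a point $\bm{x}_T\in\bar T$ and set $\bm{\theta}_T:=\bm{\theta}(\bm{x}_T)$.

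I would estimate the commutator in $L^\infty(T)$ and convert to $L^2$ by the factor $h_T^{3/2}$ inherent in the scaling \eqref{eq:scaling-grad}--\eqref{eq:scaling-div} and the basis bound \eqref{eq:bound-basis}, introducing the frozen objects at the point where the maximum is attained. Since $J_{\bm{\theta}}^kw-J_{\bm{\theta}_T}^kw$ is purely the convective contribution $(\bm{\beta}-\bm{\beta}(\bm{x}_T))$ paired with $w$ (via $\cdot$, $\times$, $\cdot$ for $k=0,1,2$), the Lipschitz bound $|\bm{\beta}-\bm{\beta}(\bm{x}_T)|\lesssim h_T|\bm{\beta}|_{1,\infty,T}$ together with the stability of $\Pi_{\bm{\theta},T}^{k+1}$ (Corollary \ref{coro:stableinterpol}) and Lemma \ref{lemma:approxinterpolate} produces the two $|\bm{\beta}|_{1,\infty,T}$ terms, namely those carrying $\|w\|_{0,p,T}$ and $h_T|w|_{1,p,T}$. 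Once the field is frozen, Theorem \ref{tm:commutativity} gives $\Pi_{\bm{\theta}_T,T}^{k+1}J_{\bm{\theta}_T}^kw=J_{\bm{\theta}_T,T}^k\Pi_{\bm{\theta}_T,T}^kw$ exactly, so this principal piece cancels; the only surviving discrepancy is $J_{\bm{\theta}_T,T}^k\Pi_{\bm{\theta}_T,T}^kw-J_{\bm{\theta},T}^k\Pi_{\bm{\theta},T}^kw$, which---because the pointwise construction of $j_S^k$ depends only on $\bm{\beta}$ at the evaluation point---is precisely of the form handled by Lemma \ref{lemma:diffinter} and contributes the $|\bm{\theta}|_{1,\infty,T}$ term. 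A discrete Cauchy--Schwarz over $T$ and collection of the powers of $h_T$ then assemble $\Theta(\varepsilon,\bm{\theta},\gamma,w)\,h$.

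I expect this commutator step to be the main obstacle: one must choose the freezing point so that the exact constant-field commutativity of Theorem \ref{tm:commutativity} can be invoked, and then show that \emph{both} sources of ``unfreezing'' error---the variation of $\bm{\beta}$ in the convective part of the continuous flux, and the variation of the weighted-interpolation/discrete-flux coefficients---are separately of order $h_T$, the former via the Lipschitz regularity of $\bm{\beta}$ and the latter via Lemma \ref{lemma:diffinter}. The secondary difficulty is purely bookkeeping: keeping the $L^p$-versus-$L^\infty$ norms consistent through the scaling argument and matching the resulting exponent $h_T^{1+3(\frac12-\frac1p)}$ against the factor $h\,h_T^{3(\frac12-\frac1p)}$ appearing in $\Theta$.
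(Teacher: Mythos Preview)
Your approach is essentially that of the paper: the same reaction/flux split, the same use of Lemma~\ref{lemma:approxinterpolate} for the reaction part and for the flux interpolation error, the same freezing at a point $\bm{x}_T\in\bar T$ followed by the constant-field commutativity (Theorem~\ref{tm:commutativity}) and Lemma~\ref{lemma:diffinter} for the residual discrepancy. The one organisational difference is the object you choose to insert. The paper works pointwise from the outset, evaluates at the $L^\infty$-maximum point $\bm{x}_T$, and inserts the \emph{frozen} quantity $\Pi_{\bm{\theta}(\bm{x}_T),T}^{k+1}J_{\bm{\theta}(\bm{x}_T)}^kw(\bm{x}_T)=J_{\bm{\theta}(\bm{x}_T),T}^k\Pi_{\bm{\theta}(\bm{x}_T),T}^kw(\bm{x}_T)$ directly, yielding a clean two-term split $I_{1,T}+I_{2,T}$. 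Since $J_{\bm{\theta}}^kw(\bm{x}_T)=J_{\bm{\theta}(\bm{x}_T)}^kw(\bm{x}_T)$ at the evaluation point, $I_{1,T}$ is precisely $(I-\Pi_{\bm{\theta}(\bm{x}_T),T}^{k+1})J_{\bm{\theta}(\bm{x}_T)}^kw(\bm{x}_T)$, and the two $|\bm{\beta}|_{1,\infty,T}$ contributions in $\Theta$ arise only from converting $|J_{\bm{\theta}(\bm{x}_T)}^kw|_{1,p,T}$ back to $|J_{\bm{\theta}}^kw|_{1,p,T}$; $I_{2,T}$ is exactly Lemma~\ref{lemma:diffinter}. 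Your route---inserting the \emph{variable}-$\bm{\theta}$ interpolant $\Pi_{\bm{\theta},T}^{k+1}J_{\bm{\theta}}^kw$ first and freezing only inside the commutator---incurs one additional unfreezing step, the passage $\Pi_{\bm{\theta},T}^{k+1}\to\Pi_{\bm{\theta}(\bm{x}_T),T}^{k+1}$ at level $k{+}1$, which you do not explicitly isolate. It is harmless (an argument parallel to the proof of Lemma~\ref{lemma:diffinter} bounds it, and the extra factor is absorbed via the hypothesis $h_T\lesssim\|\bm{\theta}\|_{1,\infty,T}^{-1}$), but the paper's single frozen insertion sidesteps it altogether.
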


\begin{proof}
	Assuming that $\|J_{\bm{\theta}}^kw-J_{\bm{\theta},T}^k\Pi_{{\bm{\theta}},T}^k{w}\|_{0,\infty,T}$ is obtain on $\bm{x}_T\in \bar{T}$. By \eqref{eq:globalbilinear} and the commutativity of diagram \eqref{eq:diagram} for $\bm{\theta}\equiv \bm{\theta}(\bm{x}_T)$ in $\bar{T}$ (see Remark \ref{rm:theta-T}), we have
	$$
	\begin{aligned}
		a({w},v_h)-a_h&(\Pi_{\bm{\theta},h}^k{w},v_h) \\
	 =&\sum_{T\in\mathcal{T}_h} (J_{\bm{\theta}}^k{w}-J_{\bm{\theta},T}^k\Pi_{{\bm{\theta}},T}^k{w},d^k{v}_h)_T+(\gamma({w}-\Pi_{{\bm{\theta}},T}^k{w}),{v}_h)_T\\
		\lesssim &\sum_{T\in\mathcal{T}_h}h_T^{3\over2}|J_{\bm{\theta}}^k{w}(\bm{x}_T)-J_{\bm{\theta},T}^k\Pi_{{\bm{\theta}},T}^k{w}(\bm{x}_T)| \|d^kv_h\|_{0,T}+(\gamma({w}-\Pi_{{\bm{\theta}},T}^k{w}),{v}_h)_T\\
		\lesssim &\sum_{T\in\mathcal{T}_h}h_T^{3\over2}\bigg(\underbrace{|J^k_{\bm{\theta}}w(\bm{x}_T)-\Pi_{{\bm{\theta}(\bm{x}_T)},T}^{k+1}J_{\bm{\theta}(\bm{x}_T)}^k{w}(\bm{x}_T)|}_{I_{1,T}}\\
		&+\underbrace{|J_{\bm{\theta}(\bm{x}_T),T}^k\Pi_{{\bm{\theta}(\bm{x}_T)},T}^k{w}(\bm{x}_T)-J_{\bm{\theta},T}^k\Pi_{{\bm{\theta}},T}^k{w}(\bm{x}_T)|}_{I_{2,T}}\bigg)\|d^kv_h\|_{0,T}\\
		&+\sum_{T\in\mathcal{T}_h}\underbrace{\|\gamma (\bm{w}-\Pi_{{\bm{\theta}},T}^k{w})\|_{0,T}}_{I_{3,T}}\|{v}_h\|_{0,T}.
	\end{aligned}
	$$
	Thanks to Lemma \ref{lemma:approxinterpolate} (approximation property), we have
	\begin{align}
		|I_{1,T}|
		=&~ |(I-\Pi_{{\bm{\theta}(\bm{x}_T)},T}^{k+1})J_{\bm{\theta}(\bm{x}_T)}^kw(\bm{x}_T)| \notag \\
		\lesssim & ~ \|(I-\Pi_{{\bm{\theta}(\bm{x}_T)},T}^{k+1})J_{\bm{\theta}(\bm{x}_T)}^kw\|_{0,\infty,T}\\
		\lesssim & ~\hat{C}_{b}C(p) h_T^{1-\frac{3}{p}}(|J_{{\bm{\theta}}}^k{w}|_{1,p,T} + |\bm{\beta}|_{1,\infty,T}\|w\|_{0,p,T} + h_T|\bm{\beta}|_{1,\infty,T}|w|_{1,p,T}), \notag \\
		|I_{3,T}| \lesssim & \hat{C}_{b} \|\gamma\|_{0,\infty,T}C(p)h_T^{1+3(\frac{1}{2}-\frac{1}{p})}|{w}|_{1,p,T}.
	\end{align}
	Using the result in Lemma \ref{lemma:diffinter} (flux difference), we have
	\begin{equation}
		\begin{aligned}
			|I_{2,T}|
			&= ~|J_{\bm{\theta}(\bm{x}_T),T}^k\Pi_{{\bm{\theta}(\bm{x}_T)},T}^k(I-\Pi_{{\bm{\theta}},T}^k){w}(\bm{x}_T)|\\
			&\lesssim ~\|J^k_{\bm{\theta}(\bm{x}_T)}\Pi_{{\bm{\theta}(\bm{x}_T)},T}^k(I-\Pi_{{\bm{\theta}},T}^k){w}\|_{0,\infty,T}\\
			& \lesssim ~\hat{C}_{b} C(p) |\bm{\theta}|_{1,\infty,T} h_T^{1-\frac{3}{p}}|w|_{1,p,T}.
		\end{aligned}
	\end{equation}
	With the above inequalities, we obtain the desired results.
\end{proof}

In the proof of Lemma \ref{lemma:diff}, we have the following consistency error of flux.
\begin{corollary}[consistency error of flux]\label{coro:fluxdiff}
For any $T\in\mathcal{T}_h$, assume that $J_{{\bm{\theta}}}^k{w}\in {W}^{1,p}(T)$, ${w}\in W^{1,p}(T)$ where $p>3$ and $h_T\lesssim\|\bm{\theta}\|_{1,\infty,T}^{-1}$. Then the following inequality holds
\begin{equation}
		\|J_{\bm{\theta}}^kw-J_{\bm{\theta},h}^k\Pi_{{\bm{\theta}},h}^kw\|_{0,\Omega}\lesssim \Theta_{J}(\varepsilon,\bm{\theta},\gamma,w)h,
\end{equation}
where
\begin{equation}
	\begin{aligned}
	\Theta_{J}(\varepsilon,\bm{\theta},\gamma,w)=&
~\hat{C}_{b}C(p)  \bigg\{ \sum_{T \in \mathcal{T}_h}
\Big( h_T^{3(\frac{1}{2}-\frac{1}{p})}(|J_{{\bm{\theta}}}^k{w}|_{1,p,T} + |\bm{\beta}|_{1,\infty,T}\|w\|_{0,p,T}\\
&~+ h_T|\bm{\beta}|_{1,\infty,T}|w|_{1,p,T}+|\bm{\theta}|_{1,\infty,T}|w|_{1,p,T})\Big)^2 \bigg\}^{\frac12}.
			\end{aligned}
\end{equation}
\end{corollary}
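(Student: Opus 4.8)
The plan is to read off the desired estimate directly from the proof of Lemma~\ref{lemma:diff}, in which the flux consistency error already surfaces as the pair of terms $I_{1,T}$ and $I_{2,T}$. The only genuinely new ingredient is to replace the pointwise control used there by an elementwise $L^2$ bound and then to sum the local contributions in $\ell^2$ over the mesh.

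First I would fix $T\in\mathcal{T}_h$ and, without loss of generality, let $\bm{x}_T\in\bar T$ be a point at which $\|J_{\bm{\theta}}^k w - J_{\bm{\theta},T}^k\Pi_{\bm{\theta},T}^k w\|_{0,\infty,T}$ is attained. Freezing the convection to its value $\bm{\theta}(\bm{x}_T)$ and inserting the commuting quantity $\Pi_{\bm{\theta}(\bm{x}_T),T}^{k+1}J_{\bm{\theta}(\bm{x}_T)}^k w(\bm{x}_T)$, which equals $J_{\bm{\theta}(\bm{x}_T),T}^k\Pi_{\bm{\theta}(\bm{x}_T),T}^k w(\bm{x}_T)$ by Theorem~\ref{tm:commutativity} and Remark~\ref{rm:theta-T}, I split the pointwise flux difference into exactly the two terms $I_{1,T}$ and $I_{2,T}$ of the proof of Lemma~\ref{lemma:diff}. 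Since $\bm{x}_T$ realises the $L^\infty$ norm, this gives
$$
\|J_{\bm{\theta}}^k w - J_{\bm{\theta},T}^k\Pi_{\bm{\theta},T}^k w\|_{0,\infty,T}\le |I_{1,T}|+|I_{2,T}|,
$$
and I simply reuse the bounds already derived there: $|I_{1,T}|\lesssim \hat{C}_{b}C(p)h_T^{1-3/p}(|J_{\bm{\theta}}^k w|_{1,p,T}+|\bm{\beta}|_{1,\infty,T}\|w\|_{0,p,T}+h_T|\bm{\beta}|_{1,\infty,T}|w|_{1,p,T})$ via Lemma~\ref{lemma:approxinterpolate}, and $|I_{2,T}|\lesssim \hat{C}_{b}C(p)|\bm{\theta}|_{1,\infty,T}h_T^{1-3/p}|w|_{1,p,T}$ via Lemma~\ref{lemma:diffinter}.

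Next I would convert to the $L^2$ norm through the crude inequality $\|g\|_{0,T}\le |T|^{1/2}\|g\|_{0,\infty,T}\lesssim h_T^{3/2}\|g\|_{0,\infty,T}$, applied with $g=J_{\bm{\theta}}^k w - J_{\bm{\theta},T}^k\Pi_{\bm{\theta},T}^k w$. Combining this with the preceding bounds produces the factor $h_T^{3/2}\cdot h_T^{1-3/p}=h_T\cdot h_T^{3(1/2-1/p)}$; bounding the surplus power by $h_T\le h$ then extracts the global prefactor $h$ and leaves precisely the weight $h_T^{3(1/2-1/p)}$ multiplying the four seminorm contributions that make up $\Theta_J$. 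Summing the squares of the local estimates over all $T\in\mathcal{T}_h$ and taking the square root yields $\|J_{\bm{\theta}}^k w - J_{\bm{\theta},h}^k\Pi_{\bm{\theta},h}^k w\|_{0,\Omega}\lesssim \Theta_J(\varepsilon,\bm{\theta},\gamma,w)\,h$, since $(J_{\bm{\theta},h}^k\Pi_{\bm{\theta},h}^k w)|_T = J_{\bm{\theta},T}^k\Pi_{\bm{\theta},T}^k(w|_T)$.

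I expect no serious analytical obstacle, because the pointwise flux bounds are already in hand from Lemma~\ref{lemma:diff}; the one delicate point is the exponent bookkeeping. One must check that the power $h_T^{3/2}$ arising from the $L^\infty$-to-$L^2$ passage, combined with the $h_T^{1-3/p}$ coming from the approximation and flux-difference lemmas, reproduces exactly the weight $h_T^{3(1/2-1/p)}$ inside $\Theta_J$ together with a single surplus power of $h_T$, which is absorbed into the global $h$ via $h_T\le h$ under the shape-regularity of $\mathcal{T}_h$.
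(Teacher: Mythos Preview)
Your proposal is correct and matches the paper's approach exactly: the paper simply states that the corollary follows ``in the proof of Lemma~\ref{lemma:diff}'', meaning one recycles the local bounds on $|I_{1,T}|$ and $|I_{2,T}|$ established there, passes from $L^\infty$ to $L^2$ via the factor $h_T^{3/2}$, and sums in $\ell^2$ over the mesh. Your exponent bookkeeping $h_T^{3/2}\cdot h_T^{1-3/p}=h_T\cdot h_T^{3(1/2-1/p)}$ is precisely what is needed.
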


\begin{theorem}[discrete inf-sup]
	Under Assumption \ref{ass:infsupa}, for sufficiently small $h$, the following inf-sup condition hold:
	\begin{equation}\label{eq:infsupah}
		\inf_{w_h\in S_h}\sup_{v_h\in V_h}\frac{a_h(w_h,v_h)}{\|w_h\|_{\mathcal{S}^k,h}\|v_h\|_{H(d^k),\Omega}}=c_1>0,
	\end{equation}
	where $c_1$ is independent of $h$.
\end{theorem}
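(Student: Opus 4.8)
The plan is to transfer the continuous inf-sup of Assumption~\ref{ass:infsupa} to the discrete pair $(S_h,V_h)$ by a Fortin-type argument, regarding the exponential fitting as a \emph{consistent variational crime}. Throughout, fix $w_h\in S_h$ and set $\tilde w_h:=\tilde\Pi_h^k w_h\in V_h$, where $\tilde\Pi_h^k$ is the global canonical interpolation onto $\mathcal{P}_{1^-}^k(\mathcal{T}_h)$. By the identification \eqref{eq:identification} we have $w_h=\Pi_{\bm\theta,h}^k\tilde w_h$, and Lemma~\ref{lemma:normequiv} gives $\|w_h\|_{\mathcal{S}^k,h}\lesssim C(\varepsilon,\bm\beta)\,\|\tilde w_h\|_{H(d^k),\Omega}$. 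Hence it suffices to exhibit $v_h\in V_h$ with $a_h(w_h,v_h)\gtrsim\|\tilde w_h\|_{H(d^k),\Omega}\|v_h\|_{H(d^k),\Omega}$; dividing by $\|v_h\|_{H(d^k),\Omega}$ and re-inserting the norm equivalence produces the claimed $c_1=c_1(\varepsilon,\bm\beta,\gamma)>0$.

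To produce the test function I would work with the polynomial Galerkin form $a(\tilde w_h,v_h)$ on $V_h\times V_h$. Since $a$ is not coercive, the bare continuous inf-sup is not enough; one needs that the conforming Galerkin discretization of $a$ inherits the inf-sup for $h$ small, i.e. there is $v_h\in V_h$ with $a(\tilde w_h,v_h)\ge\beta\|\tilde w_h\|_{H(d^k)}\|v_h\|_{H(d^k)}$, $\beta>0$ independent of $h$. This is a standard Schatz/Aubin--Nitsche duality step: $a$ satisfies a Gårding inequality, since $(J_{\bm\theta}^k u,d^k u)\ge\varepsilon\|d^k u\|_0^2-\tfrac{\varepsilon}{2}\|d^k u\|_0^2-\tfrac{\|\bm\beta\|_\infty^2}{2\varepsilon}\|u\|_0^2$, so Assumption~\ref{ass:infsupa} together with $H(d^k)$-regularity of the adjoint problem and the approximation/commuting properties of $\mathcal{P}_{1^-}^k$ yields the discrete inf-sup on the polynomial test space.

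The crux is to bridge $a_h(w_h,v_h)$ and $a(\tilde w_h,v_h)$, that is, to control the variational crime $a(\tilde w_h,v_h)-a_h(w_h,v_h)$. For the lowest-order spaces $d^k v_h|_T$ is a constant $c_T$, so the defect reduces to $\sum_T c_T\cdot\int_T\!\big(J_{\bm\theta}^k\tilde w_h-J_{\bm\theta,T}^k w_h\big)+\big(\gamma(\tilde w_h-w_h),v_h\big)$. Freezing $\bm\theta\equiv\bm\theta(\bm x_T)$ on each $T$ (Remark~\ref{rm:theta-T}) and invoking Theorem~\ref{tm:commutativity} with \eqref{eq:identification} gives $J_{\bm\theta,T}^k w_h=\Pi_{\bm\theta,T}^{k+1}J_{\bm\theta}^k\tilde w_h$, whence the flux defect on $T$ equals $c_T\int_T(I-\Pi_{\bm\theta,T}^{k+1})J_{\bm\theta}^k\tilde w_h$. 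The decisive point is that the diffusive part $\varepsilon d^k\tilde w_h$ is \emph{constant} on $T$ and is reproduced exactly by $\Pi_{\bm\theta,T}^{k+1}$, so only the convective contribution $J_{\bm\theta}^k\tilde w_h-\varepsilon d^k\tilde w_h$ survives; its $E_{\bm\theta}$-weighted element average differs from the plain average only through the variation of $E_{\bm\theta}$, giving a gain of order $\|\bm\theta\|h_T^2$ that beats the inverse estimate $|\tilde w_h|_{1,T}\lesssim h_T^{-1}\|\tilde w_h\|_{0,T}$ and leaves a net defect $\lesssim h\,\|\tilde w_h\|_{0,\Omega}\|d^k v_h\|_{0,\Omega}$. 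The $L^2$ term is treated analogously using the approximation of $\Pi_{\bm\theta,T}^k$. Combining with the previous step gives $a_h(w_h,v_h)\ge(\beta-Ch)\|\tilde w_h\|_{H(d^k)}\|v_h\|_{H(d^k)}\ge\tfrac{\beta}{2}\|\tilde w_h\|_{H(d^k)}\|v_h\|_{H(d^k)}$ for $h$ small, and the first-paragraph reduction closes the argument.

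The main obstacle is exactly this variational-crime estimate, and it must be done carefully: one must \emph{not} bound $(I-\Pi_{\bm\theta,T}^{k+1})J_{\bm\theta}^k\tilde w_h$ in $L^2$, since that quantity is only $O(1)$ for an oscillatory discrete function (which is why the crude consistency bound of Lemma~\ref{lemma:diff}, tailored to the smooth exact solution, is not sharp enough here). Instead one exploits that the discrepancy is integrated against the \emph{constant} $c_T=d^k v_h|_T$: commutativity cancels the diffusive part exactly, and the mean-zero oscillation of the convective part tested against the weight $E_{\bm\theta}$ furnishes the extra power of $h$ needed to make the crime asymptotically negligible. A secondary difficulty is the transfer in the second paragraph, which genuinely requires a duality argument with adjoint regularity rather than the bare continuous inf-sup.
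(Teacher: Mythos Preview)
Your overall architecture---pass to $\tilde w_h=\tilde\Pi_h^k w_h\in V_h$, invoke a Schatz--type discrete inf-sup for $a(\cdot,\cdot)$ on $V_h\times V_h$, and absorb the variational crime $a(\tilde w_h,v_h)-a_h(w_h,v_h)$ as an $O(h)$ perturbation---is exactly the route the paper takes. The substantive disagreement is your assertion that Lemma~\ref{lemma:diff} is ``not sharp enough here''. The paper applies Lemma~\ref{lemma:diff} \emph{directly} with $w=\tilde w_h$ and it suffices; your detour through testing against the constant $c_T=d^kv_h|_T$ and chasing an extra power of $h$ from weighted means is unnecessary.

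The point you are missing is a structural feature of the lowest-order trial space $\mathcal{P}_{1^-}^k(T)$: not only is $d^k\tilde w_h$ piecewise constant (so $|J_{\bm\theta}^k\tilde w_h|_{1,p,T}$ reduces to the convective contribution, which you correctly note), but in addition the \emph{full} $H^1$ seminorm satisfies $|\tilde w_h|_{1,2,T}\lesssim\|d^k\tilde w_h\|_{0,T}$ for every $\tilde w_h\in\mathcal{P}_{1^-}^k(T)$. This is immediate from the explicit shape $a+b\times\bm{x}$ (resp.\ $a+c\bm{x}$) of N\'ed\'elec (resp.\ Raviart--Thomas) functions. Consequently, after the $L^p\!\to\!L^2$ inverse inequality one gets $h_T^{3(1/2-1/p)}|\tilde w_h|_{1,p,T}\lesssim|\tilde w_h|_{1,2,T}\lesssim\|\tilde w_h\|_{H(d^k),T}$ \emph{without} losing a factor $h_T^{-1}$. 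Plugging this into $\Theta(\varepsilon,\bm\theta,\gamma,\tilde w_h)$ in \eqref{eq:theta} yields the bound $|a(\tilde w_h,v_h)-a_h(w_h,v_h)|\lesssim h\,\|\tilde w_h\|_{H(d^k),\Omega}\|v_h\|_{H(d^k),\Omega}$ with no further tricks. Your worry that ``$(I-\Pi_{\bm\theta,T}^{k+1})J_{\bm\theta}^k\tilde w_h$ is only $O(1)$'' stems from applying the \emph{generic} inverse estimate $|\tilde w_h|_{1,T}\lesssim h_T^{-1}\|\tilde w_h\|_{0,T}$; for $\mathcal{P}_{1^-}^k$ that bound is far from sharp, and once you replace it by the correct structural bound the straightforward $L^2$ estimate already delivers the required $O(h)$. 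Your proposed recovery mechanism (an extra $h$ from the oscillation of $E_{\bm\theta}$) is not needed, and as written is too vague to be convincing on its own.
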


\begin{proof}
In accordance with Assumption \ref{ass:infsupa} (well-posedness), the bilinear form $a(w_h, v_h)$ fulfills the discrete inf-sup condition on $V_h$ for sufficiently small $h$:
	\begin{equation}
		\sup_{v_h\in V_h} \frac{a(\tilde{w}_h,v_h)}{\|v_h\|_{H(d^k),\Omega}}\ge \frac{c_0}{2}\|\tilde{w}_h\|_{H(d^k),\Omega} \qquad \forall \tilde{w}_h\in V_h.
	\end{equation}
	This result can be derived through an extension of the induction presented in \cite{schatz1974observation} and \cite{xu1996two}. Given $w_h \in S_h$, take $\tilde{w}_h = \tilde{\Pi}_h^k w_h \in V_h$, which satisfies the relation $w_h = \Pi_{{\bm{\theta}},h}^k \tilde{w}_h$. By virtue of Lemma \ref{lemma:diff} (consistency error), we then have:
	$$
	\begin{aligned}
		|a(\tilde{w}_h,v_h)-a_h(w_h,v_h) | &= |a(\tilde{w}_h,v_h)-a_h(\Pi_{{\bm{\theta}},h}^k\tilde{w}_h,v_h) |\\
		& \lesssim \Theta(\varepsilon,\bm{\theta},\gamma,\tilde{w}_h)\|v_h\|_{H(d^k),\Omega}.
	\end{aligned}
	$$
	Observe that $|d^k\tilde{w}_h|_{1,p,T}=0$ for any $\tilde{w}_h\in V_h$ and $T\in\mathcal{T}_h$. By inverse equality, we have
	$$
	\begin{aligned}
	h_T^{3(\frac{1}{2}-\frac{1}{p})}|J_{\bm{\theta}}^k\tilde{w}_h|_{1,p,T} 
	& \lesssim h_T^{3(\frac{1}{2}-\frac{1}{p})} \big( \|\bm{\beta}\|_{0,\infty,T}|\tilde{w}_h|_{1,p,T} + |\bm{\beta}|_{1,\infty,T} \|\tilde{w}_h\|_{0,p,T} \big) \\
	& \lesssim \|\bm{\beta}\|_{1,\infty,T}\|\tilde{w}_h\|_{H(d^k),T}.
	\end{aligned}
	$$
	The rest terms in $\Theta(\alpha,\bm{\theta},\gamma,\tilde{w}_h)$ can be estimated by the inverse inequality and we have
	\begin{equation}
		|a(\tilde{w}_h,v_h)-a_h(w_h,v_h) |\lesssim \tilde{\Theta}(\varepsilon,\bm{\beta},\gamma)h\|\tilde{w}_h\|_{H(d^k),\Omega}\|v_h\|_{H(d^k),\Omega},
	\end{equation}
	where
	\begin{equation}
		\begin{aligned}
		\tilde{\Theta}(\varepsilon,\bm{\beta},\gamma):=\hat{C}_{b}C(p)\max_{T\in\mathcal{T}_h}\left\{ 
		\|\bm{\beta}\|_{1,\infty,T}+\|\gamma\|_{0,\infty,T}+|\bm{\theta}|_{1,\infty,T}\right\}.
		\end{aligned}
	\end{equation}
	Using \eqref{eq:bound1} in Lemma \ref{lemma:normequiv} (relationship between energy norms) and the boundness of $\varepsilon$, we have
	\begin{equation}
		\|w_h\|_{\mathcal{S}^k,h} \lesssim \hat{C}_{b}(1+\|\bm{\beta}\|_{0,\infty,\Omega}) \|\tilde{w}_h\|_{H(d^k),\Omega}.
	\end{equation}
	The desired result then follows when
	$
	h\lesssim h_0:=c_0(\tilde{\Theta}(\varepsilon,\bm{\beta},\gamma))^{-1}
	$:
	$$
	\begin{aligned}
		\sup_{{v}_h\in V_h}\frac{a_h(w_h,v_h)}{\|v_h\|_{H(d^k),\Omega}}&\ge \sup_{{v}_h\in V_h}\frac{a(\tilde{w}_h,v_h)}{\|v_h\|_{H(d^k),\Omega}} -Ch\tilde{\Theta}(\varepsilon,\bm{\beta},\gamma)\|\tilde{w}_h\|_{H(d^k),\Omega}\\
		& \ge \frac{c_0}{4}\|\tilde{w}_h\|_{H(d^k),\Omega} \gtrsim \frac{c_0}{\hat{C}_{b}(1+\|\bm{\beta}\|_{0,\infty,\Omega})}\|w_h\|_{\mathcal{S}^k,h}.	
	\end{aligned}
	$$
This completes the proof.
\end{proof}

\begin{theorem}
	Let ${u}$ be the solution of \eqref{eq:generalcd} and $u_h$ be the solution of \eqref{eq:discretevration}. Assume that for all $T\in\mathcal{T}_h$, $h_T \lesssim \|\bm{\theta}\|_{1,\infty,T}^{-1}$, $u\in W^{1,p}(T)$ and $J_{\bm{\theta}}^ku\in W^{1,p}(T)$, $p>3$. Then, the following estimate holds for sufficiently small $h$:
	\begin{equation}\label{eq:uhPiu}
		\|u_h-\Pi_{\bm{\theta},h}^ku\|_{\mathcal{S}^k,h} \lesssim \frac{1}{c_1}\Theta(\varepsilon,\bm{\beta},\gamma,u)h,
	\end{equation}
	where the constant $\Theta(\varepsilon,\bm{\beta},\gamma,u)$ is given in \eqref{eq:theta}.
\end{theorem}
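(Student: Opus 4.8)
The plan is to run the standard Strang/Céa-type argument built directly on the two results just established: the discrete inf-sup condition \eqref{eq:infsupah} and the consistency error of Lemma~\ref{lemma:diff}. The central observation is that the left-hand side of \eqref{eq:uhPiu} is exactly $\|w_h\|_{\mathcal{S}^k,h}$, where $w_h := u_h - \Pi_{\bm{\theta},h}^k u \in S_h$; thus it suffices to control this single discrete quantity, and no separate approximation estimate for $u - \Pi_{\bm{\theta},h}^k u$ is needed at this stage.

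First I would invoke \eqref{eq:infsupah} for the fixed element $w_h \in S_h$: for $h$ sufficiently small there exists $v_h \in V_h$ such that
\[
c_1 \|w_h\|_{\mathcal{S}^k,h}\,\|v_h\|_{H(d^k),\Omega} \lesssim a_h(w_h, v_h).
\]
Next I would expand $a_h(w_h,v_h) = a_h(u_h,v_h) - a_h(\Pi_{\bm{\theta},h}^k u, v_h)$. The crucial consistency step uses that the polynomial space is conforming, i.e.\ $V_h \subset V$, so the continuous variational identity \eqref{eq:bilinear} may be legitimately tested with $v_h$, giving $a(u,v_h) = F(v_h)$; combined with the discrete equation \eqref{eq:discretevration}, which yields $a_h(u_h,v_h) = F(v_h)$, this produces $a_h(u_h,v_h) = a(u,v_h)$. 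Hence
\[
a_h(w_h,v_h) = a(u,v_h) - a_h(\Pi_{\bm{\theta},h}^k u, v_h),
\]
which is precisely the quantity bounded in Lemma~\ref{lemma:diff} by $\Theta(\varepsilon,\bm{\theta},\gamma,u)\,h\,\|v_h\|_{H(d^k),\Omega}$.

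Combining the two displays and dividing by $\|v_h\|_{H(d^k),\Omega}$ yields $c_1\|w_h\|_{\mathcal{S}^k,h} \lesssim \Theta(\varepsilon,\bm{\beta},\gamma,u)\,h$ (identifying $\bm{\theta}=\bm{\beta}/\varepsilon$ with $\varepsilon$ fixed), which is exactly \eqref{eq:uhPiu}. I do not expect a genuine obstacle here, since all the analytic effort has been front-loaded into the inf-sup theorem and the consistency lemma; this final argument is purely the abstract error-estimate bookkeeping. The only points requiring care are (i) verifying $V_h \subset V$, so that $u$ may be tested against $v_h$ — this is nothing but the $H(d^k)$-conformity of $\mathcal{P}_{1^-}^k$; (ii) taking $h$ small enough for \eqref{eq:infsupah} to hold, the same smallness already imposed in the inf-sup theorem; and (iii) checking that the hypothesis $h_T \lesssim \|\bm{\theta}\|_{1,\infty,T}^{-1}$ on each element is precisely what Lemma~\ref{lemma:diff} requires to apply, so that the bound by $\Theta$ is valid elementwise and then summed.
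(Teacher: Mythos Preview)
Your proposal is correct and follows essentially the same approach as the paper: the paper also writes $a_h(u_h-\Pi_{\bm{\theta},h}^ku,v_h)=(f,v_h)-a_h(\Pi_{\bm{\theta},h}^ku,v_h)=a(u,v_h)-a_h(\Pi_{\bm{\theta},h}^ku,v_h)$, applies Lemma~\ref{lemma:diff} to bound this by $\Theta(\varepsilon,\bm{\beta},\gamma,u)\,h\,\|v_h\|_{H(d^k),\Omega}$, and then invokes the discrete inf-sup condition~\eqref{eq:infsupah}. Your observations about $V_h\subset V$ and the smallness requirement on $h$ are exactly the implicit ingredients used.
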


\begin{proof}
	By Lemma \ref{lemma:diff} (consistency error),
	$$
	\begin{aligned}
		| a_h(u_h-\Pi_{{\bm{\theta}},h}^ku,v_h)| &= |(f,v_h)-a_h(\Pi_{{\bm{\theta}},h}^ku,v_h)| 
		= | a(u,v_h)-a_h(\Pi_{{\bm{\theta}},h}^ku,v_h)| \\
		&\lesssim \Theta(\varepsilon,\bm{\beta},\gamma,u)h\|v_h\|_{H(d^k),\Omega}.
	\end{aligned}
	$$
	By the discrete inf-sup condition \eqref{eq:infsupah},
	$$
	\|u_h-\Pi_{\bm{\theta},h}^ku\|_{\mathcal{S}^k,h} \lesssim \frac{1}{c_1}\Theta(\varepsilon,\bm{\beta},\gamma,u)h.
	$$
	This completes the proof.
\end{proof}

\begin{theorem}[error estimate]
	Let ${u}$ be the solution of \eqref{eq:generalcd} and $u_h$ be the solution of \eqref{eq:discretevration}. Assume that for all $T\in\mathcal{T}_h$, $h_T\lesssim \|\bm{\theta}\|_{1,\infty,T}^{-1}$, $u\in W^{1,p}(T)$ and $J_{\bm{\theta}}^ku\in W^{1,p}(T)$ with $p>3$. Then, the following estimate holds for sufficiently small $h$:
	\begin{equation}
		(\|J_{\bm{\theta}}^ku-J_{\bm{\theta},h}^ku_h\|_{0,\Omega}^2+\|u-u_h\|_{0,\Omega}^2)^{1/2} \lesssim (1+{1\over c_1}) \Theta(\varepsilon,\bm{\beta},\max\{\gamma,1\},u)h,
	\end{equation}
	where the constant $\Theta(\varepsilon,\bm{\beta},\gamma,u)$ is given in \eqref{eq:theta}.
\end{theorem}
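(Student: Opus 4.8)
The plan is to use the interpolant $\Pi_{\bm{\theta},h}^k u$ as an intermediary and combine a triangle inequality with the estimates already in hand: the two interpolation parts are handled by the approximation and flux-consistency estimates, while the genuinely discrete part is controlled by the quasi-optimality bound \eqref{eq:uhPiu}. Writing $e_h := u_h - \Pi_{\bm{\theta},h}^k u \in S_h$ and using the linearity of the discrete flux, I would first decompose
\[
u - u_h = (u - \Pi_{\bm{\theta},h}^k u) - e_h, \qquad J_{\bm{\theta}}^k u - J_{\bm{\theta},h}^k u_h = (J_{\bm{\theta}}^k u - J_{\bm{\theta},h}^k \Pi_{\bm{\theta},h}^k u) - J_{\bm{\theta},h}^k e_h.
\]

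Next I would bound the two interpolation terms. For the value, summing the $s=2$ case of Lemma \ref{lemma:approxinterpolate} over $T \in \mathcal{T}_h$ and using $h_T \le h$ yields
\[
\|u - \Pi_{\bm{\theta},h}^k u\|_{0,\Omega} \lesssim \hat{C}_b C(p)\, h \Big( \sum_{T \in \mathcal{T}_h} h_T^{6(\frac12 - \frac1p)} |u|_{1,p,T}^2 \Big)^{1/2}.
\]
The factor in parentheses is exactly the $\gamma$-slot of $\Theta$ in \eqref{eq:theta} but with $\|\gamma\|_{0,\infty,T}$ replaced by $1$; this is the origin of the $\max\{\gamma,1\}$ in the statement, since $\max\{\|\gamma\|_{0,\infty,T},1\}$ dominates both this contribution and the genuine reaction term carried inside $\Theta$. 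For the flux, Corollary \ref{coro:fluxdiff} gives $\|J_{\bm{\theta}}^k u - J_{\bm{\theta},h}^k \Pi_{\bm{\theta},h}^k u\|_{0,\Omega} \lesssim \Theta_J(\varepsilon,\bm{\theta},\gamma,u)\,h$, and since every term of $\Theta_J$ appears, up to constants, among the terms of $\Theta$, this is in turn $\lesssim \Theta(\varepsilon,\bm{\beta},\gamma,u)\,h$.

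For the discrete term, the definition \eqref{eq:discretenorm} of $\|\cdot\|_{\mathcal{S}^k,h}$ gives both $\|e_h\|_{0,\Omega} \le \|e_h\|_{\mathcal{S}^k,h}$ and $\|J_{\bm{\theta},h}^k e_h\|_{0,\Omega} \le \|e_h\|_{\mathcal{S}^k,h}$, while the quasi-optimality estimate \eqref{eq:uhPiu} bounds $\|e_h\|_{\mathcal{S}^k,h} \lesssim \frac{1}{c_1}\Theta(\varepsilon,\bm{\beta},\gamma,u)\,h$. Assembling the two triangle inequalities, each of $\|u-u_h\|_{0,\Omega}$ and $\|J_{\bm{\theta}}^k u - J_{\bm{\theta},h}^k u_h\|_{0,\Omega}$ is $\lesssim (1 + \tfrac1{c_1})\,\Theta(\varepsilon,\bm{\beta},\max\{\gamma,1\},u)\,h$, and bounding the Euclidean combination by the sum of the two components delivers the claim.

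This is essentially a Strang-type assembly, so no new analytic difficulty arises; since the approximation lemma, the flux-consistency corollary, and the quasi-optimality bound already carry all the analysis, the main point requiring care is purely the bookkeeping described above — verifying that $\Theta_J$ is dominated by $\Theta$, and tracking how the coefficient-one interpolation estimate for the value forces the replacement of $\gamma$ by $\max\{\gamma,1\}$ in the final $\Theta$.
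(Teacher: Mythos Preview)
Your proposal is correct and follows essentially the same route as the paper: split through the interpolant $\Pi_{\bm{\theta},h}^k u$, control the interpolation pieces by Lemma~\ref{lemma:approxinterpolate} and Corollary~\ref{coro:fluxdiff} (the paper packages these together as \eqref{eq:interpenergy} with $\Theta(\varepsilon,\bm{\beta},1,u)$), and control the discrete remainder by \eqref{eq:uhPiu}, then conclude by the triangle inequality. Your explicit remarks on why $\Theta_J\lesssim\Theta$ and on the origin of the $\max\{\gamma,1\}$ are accurate and simply make transparent what the paper leaves implicit.
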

\begin{proof}
	The approximation property of $\Pi_{{\bm{\theta}},T}^k$ in Lemma \ref{lemma:approxinterpolate} shows that 	
	$$
	\|u-\Pi_{{\bm{\theta}},T}^ku\|_{0,T} \lesssim \hat{C}_{b}C(p) h_T^{1+3(\frac{1}{2}-\frac{1}{p})}|u|_{1,p,T}.	
	$$
	Summing up the aforementioned inequality across $T \in \mathcal{T}_h$, and amalgamating this result with Corollary \ref{coro:fluxdiff} (consistency error of flux), yields:
	\begin{equation}\label{eq:interpenergy}
		(\|J_{\bm{\theta}}^ku-J_{\bm{\theta},h}^k\Pi_{{\bm{\theta}},h}^ku\|_{0,\Omega}^2+\|u-\Pi_{{\bm{\theta}},h}^ku\|_{0,\Omega}^2)^{1/2} \lesssim \Theta(\varepsilon,\bm{\beta},1,u)h.
	\end{equation}
	Then the desired result is obtained through triangle inequality on \eqref{eq:uhPiu} and \eqref{eq:interpenergy}.
\end{proof}

\section{Numerical tests}\label{sec:numerical}
In this section, we present a series of numerical tests for $\bm{H}(\mathrm{curl})$ and $\bm{H}(\mathrm{div})$ convection-diffusion problems conducted in both two dimensions (2D) and three dimensions (3D). These tests are designed to showcase the convergence and accuracy of the proposed exponentially-fitted finite element spaces, as well as their performance in handling convection-dominated problems featuring solutions with distinct layers. 

Our experimental investigations are conducted on the unit square $\Omega=(0,1)^2$ in 2D and on the unit cube $\Omega=(0,1)^3$ in 3D. We employ uniform meshes with varying mesh sizes across all the tests. The results of these tests offer insights into the efficacy and robustness of the proposed approach, shedding light on its capabilities in accurately capturing intricate solution behaviors, particularly in scenarios characterized by strong convection effects and layered solution structures.

{\it Evaluation of $(J_{{\theta},T}^ku_h,d^kv_h)_T$.}
In the numerical implementation of $(J_{\bm{\theta},T}^ku_h,d^kv_h)_T$ for $w_h\in \mathcal{S}_{1^-}^k(T)$ and $v_h\in \mathcal{P}_{1^-}^k(T)$, it is noteworthy that the variation of $J_{\bm{\theta},T}^ku_h$ is relatively mild and has the property of constant and kernel preservation for a constant $\bm{\theta}$. This characteristic allows us to approximate $(J_{\bm{\theta},T}^ku_h,d^kv_h)_T$ as follows:
$$
\begin{aligned}
	(J_{\bm{\theta},T}^ku_h,d^kv_h)_T\approx&\sum_{T\in\mathcal{T}_h} {J}_{\bm{\theta},T}^ku_h(\bm{b}_T) \cdot \int_T d^kv_h\\
	=&\sum_{T\in\mathcal{T}_h}{J}_{\bm{\theta},T}^ku_h(\bm{b}_T) \cdot \int_{\partial T} {\rm tr}({v}_h),
\end{aligned}
$$
where $\bm{b}_T$ denotes the barycenter of the element $T$. 

\subsection{$H({\rm div})$ convection-diffusion in 2D} Similar to the 3D case, we can establish the local $\bm{H}({\rm div})$ exponentially-fitted finite element space in  2D and adopt the labeling shown in Figure \ref{fig:divbasis2D}, which yields the following problem:
\begin{figure}[!htbp]
	\centering
	\includegraphics[width=.3\textwidth]{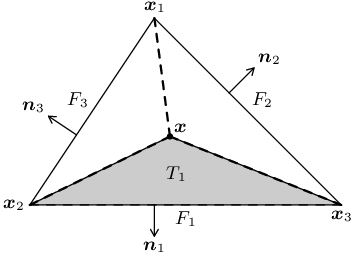}
	\caption{Geometric notation for $2D$ $H({\rm div})$ convection diffusion problem.}\label{fig:divbasis2D}
\end{figure}

\begin{problem}[2D $\mathcal{L}$-spline for $\bm{H}({\rm div})$ convection-diffusion]\label{problem:div2D}
	Find $\bm{\phi}_1^F(\bm{x})$ and $j_1^F(\bm{x})$ such that for all $x\in \bar{T}$
	\begin{equation}
		\label{eq:sysdiv}
		D^F(\bm{x})\left(\begin{matrix}
			j_1^F(\bm{x})\\\bm{\phi}_1^F(\bm{x})
		\end{matrix}\right)= B_2^\varepsilon(\sigma_{23},-\sigma_{2})\bm{e}_1^F.
	\end{equation}
	Here, $\bm{e}^F_1=(1,0,0)^\top$ and $D^F(\bm{x})$ is a $3\times 3$ matrix defined by
	\begin{equation}
		\begin{aligned}
			&	D^F(\bm{x}) =\left(\begin{matrix}
				\frac{\bm{l}_2\times\bm{l}_3}{2} & B_2^\varepsilon(\sigma_2,\sigma_3)\bm{l}^\perp_2-B_2^\varepsilon(\sigma_3,\sigma_2)\bm{l}_3^\perp \\
				\frac{\bm{l}_1\times\bm{l}_3}{2} & B_2^\varepsilon(\sigma_3,\sigma_1)\bm{l}^\perp_3-B_2^\varepsilon(\sigma_1,\sigma_3)\bm{l}_1^\perp \\
				\frac{\bm{l}_1\times\bm{l}_2}{2} & B_2^\varepsilon(\sigma_1,\sigma_2)\bm{l}^\perp_1-B_2^\varepsilon(\sigma_2,\sigma_1)\bm{l}_2^\perp \\
			\end{matrix}\right),
		\end{aligned}
	\end{equation}
where the 2D cross product is given by $\bm{x}\times\bm{y} :=x_1y_2-x_2y_1$ and 2D rotation is $(x_1,x_2)^\perp :=(x_2,-x_1)$. 
\end{problem}
It is worth noting that the properties of the 2D $\bm{H}({\rm div})$ exponentially-fitted finite element space remain consistent with those observed in the 3D case. In the context of the current experiment, we apply the homogeneous boundary condition $\bm{u}\cdot\bm{n}|_{\partial \Omega}=0$, wherein the convection speed is defined as $\bm{\beta}=(-x_2,x_1)$, and the reaction coefficient is given by $\gamma=1$.

{\it Convergence order test}. $\bm{f}$ is analytically selected so that the exact solution of \eqref{eq:generalcd} is
$$
\bm{u} = \left(x_1x_2(1-x_1)(1-x_2),\sin(\pi x_1)\sin(\pi x_2)\right)^T.
$$
Results presented in Tables \ref{tab:div2dL2} and \ref{tab:div2dJ} demonstrate a consistent trend of first-order convergence in the $L^2$ error of both the solution and the flux. This convergence behavior holds across various diffusion coefficients, spanning the range from $1$ to $10^{-6}$. It is intriguing to observe that, in cases devoid of boundary or internal layers, the $L^2$ error of both the solution and the flux appear to exhibit stability with respect to the diffusion coefficient $\varepsilon$.

\begin{table}[!htbp]
	\centering
	\caption{2D $\bm{H}({\rm div})$ problem: $L^2$ error convergence test of solution.}\label{tab:div2dL2}
	\begin{tabular}{c|c c|c c|c c}
		\hline
		\multirow{2}{*}{$1/h$} & \multicolumn{2}{c|}{$\varepsilon=1$} & \multicolumn{2}{c|}{$\varepsilon=0.01$} & \multicolumn{2}{c}{$\varepsilon=10^{-6}$}\\
		& $\|\bm{u}-\bm{u}_h\|_{0,\Omega}$ & order & $\|\bm{u}-\bm{u}_h\|_{0,\Omega}$ & order & $\|\bm{u}-\bm{u}_h\|_{0,\Omega}$ & order\\
		\hline
	4& 1.51e-01 & --& 1.61e-01 & --& 1.74e-01 & --\\ 
	8& 7.71e-02 & 0.97& 7.92e-02 & 1.03& 8.84e-02 & 0.98\\ 
	16& 3.88e-02 & 0.99& 3.92e-02 & 1.02& 4.46e-02 & 0.99\\ 
	32& 1.94e-02 & 1.00& 1.95e-02 & 1.01& 2.24e-02 & 0.99\\ 
	64& 9.70e-03 & 1.00& 9.72e-03 & 1.00& 1.13e-02 & 0.99\\ 
	128& 4.85e-03 & 1.00& 4.85e-03 & 1.00& 5.65e-03 & 1.00\\ 
		\hline
	\end{tabular}
\end{table}

\begin{table}[!htbp]
	\centering
	\caption{2D $\bm{H}({\rm div})$ problem: $L^2$ error convergence test of flux.}\label{tab:div2dJ}
	\begin{tabular}{c|c c|c c|c c}
		\hline
		\multirow{2}{*}{$1/h$} & \multicolumn{2}{c|}{$\varepsilon=1$} & \multicolumn{2}{c|}{$\varepsilon=0.01$} & \multicolumn{2}{c}{$\varepsilon=10^{-6}$}\\
		& $\|J_{\bm{\theta}}^1\bm{u}-J_{\bm{\theta},h}^1\bm{u}_h\|_{0,\Omega}$ & order &   $\|J_{\bm{\theta}}^1\bm{u}-J_{\bm{\theta},h}^1\bm{u}_h\|_{0,\Omega}$ & order &   $\|J_{\bm{\theta}}^1\bm{u}-J_{\bm{\theta},h}^1\bm{u}_h\|_{0,\Omega}$ & order\\
		\hline
		4& 4.25e-01 & --& 7.08e-02 & --& 7.22e-02 & --\\ 
		8& 2.15e-01 & 0.98& 3.59e-02 & 0.98& 3.66e-02 & 0.98\\ 
		16& 1.08e-01 & 1.00& 1.80e-02 & 1.00& 1.84e-02 & 0.99\\ 
		32& 5.40e-02 & 1.00& 8.98e-03 & 1.00& 9.22e-03 & 1.00\\ 
		64& 2.70e-02 & 1.00& 4.49e-03 & 1.00& 4.61e-03 & 1.00\\ 
		128& 1.35e-02 & 1.00& 2.25e-03 & 1.00& 2.31e-03 & 1.00\\ 
		\hline
	\end{tabular}
\end{table}
{\it Numerical stability}. By selecting $\bm{f}=(1,1)^\top$ and utilizing a mesh size of $h=1/16$, we calculate the numerical solution for different diffusion coefficients, specifically $\varepsilon=0.01$ and $\varepsilon=10^{-6}$. As illustrated in Figure \ref{fig:div2Dstable}, the obtained numerical solution showcases a remarkable stability, devoid of oscillations near the boundary, across all values of $\varepsilon$. This observation serves to affirm the inherent stabilizing effect of the exponentially-fitted finite element method in the context of convection-dominated cases.
\begin{figure}[!htbp]
	\centering
	\subfloat[$\varepsilon=0.01$]{
		\includegraphics[width=.45\textwidth]{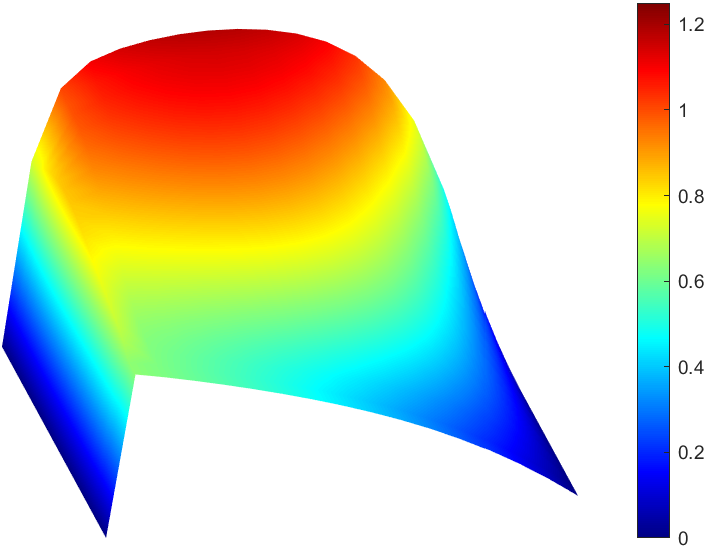}
	}
	\subfloat[$\varepsilon=10^{-6}$]{
		\includegraphics[width=.45\textwidth]{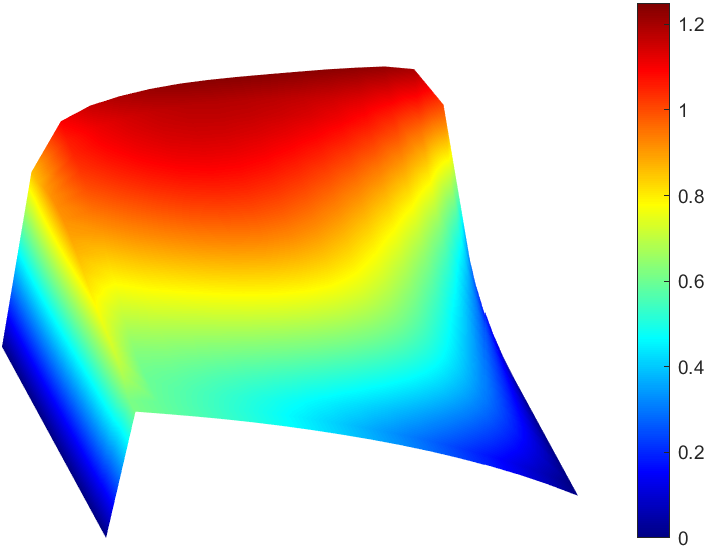} 
	}
	\caption{Plots of the first component of the numerical solution for 2D $\bm{H}({\rm div})$ convection-diffusion problems.}
	\label{fig:div2Dstable}
\end{figure}
\subsection{$H({\rm curl})$ convection-diffusion in 3D} Consider the exact solution given by:
$$
\bm{u} = \big(\sin x_3,\sin x_1,\sin x_2\big)^\top,
$$
where the convection speed is defined as $\bm{\beta}=(x_2,x_3,x_1)^\top$, and the reaction coefficient is $\gamma=1$. Both the Dirichlet boundary condition and the source term $\bm{f}$ can be obtained analytically.

As demonstrated in Table \ref{tab:curl3dL2} and Table \ref{tab:curl3dJ}, we observe first-order convergence in both the $L^2$ error of the solution and the flux, across a range of diffusion coefficients spanning from $1$ to $10^{-6}$. Moreover, our observations continue to indicate that the $L^2$ error of both the solution and the flux remain stable with regard to changes in the diffusion coefficient $\varepsilon$, in scenarios where the solution lacks boundary or internal layers.

\begin{table}[!htbp]
	\centering
	\caption{3D $\bm{H}({\rm curl})$ problem: $L^2$ error convergence test of solution.}\label{tab:curl3dL2}
	\begin{tabular}{c|c c|c c|c c}
		\hline
		\multirow{2}{*}{$1/h$} & \multicolumn{2}{c|}{$\varepsilon=1$} & \multicolumn{2}{c|}{$\varepsilon=0.01$} & \multicolumn{2}{c}{$\varepsilon=10^{-6}$}\\
		& $\|\bm{u}-\bm{u}_h\|_{0,\Omega}$ & order & $\|\bm{u}-\bm{u}_h\|_{0,\Omega}$ & order & $\|\bm{u}-\bm{u}_h\|_{0,\Omega}$ & order\\
		\hline
		2& 0.258763 & --& 0.247988 & --& 0.252041 & --\\ 
		4& 0.129828 & 1.00& 0.118162 & 1.07& 0.120991 & 1.06\\ 
		8& 0.064972 & 1.00& 0.057758 & 1.03& 0.058927 & 1.04\\ 
		16& 0.032494 & 1.00& 0.029555 & 0.97& 0.029035 & 1.02\\ 
		\hline
	\end{tabular}
\end{table}

\begin{table}[!htbp]
	\centering
	\caption{3D $\bm{H}({\rm curl})$ problem: $L^2$ error convergence test of flux.}\label{tab:curl3dJ}
	\begin{tabular}{c|c c|c c|c c}
		\hline
		\multirow{2}{*}{$1/h$} & \multicolumn{2}{c|}{$\varepsilon=1$} & \multicolumn{2}{c|}{$\varepsilon=0.01$} & \multicolumn{2}{c}{$\varepsilon=10^{-6}$}\\
		& $\|\bm{J}_{\bm{\theta}}^1\bm{u}-\bm{J}_{\bm{\theta},h}^1\bm{u}_h\|_{0,\Omega}$ & order &   $\|\bm{J}_{\bm{\theta}}^1\bm{u}-\bm{J}_{\bm{\theta},h}^1\bm{u}_h\|_{0,\Omega}$ & order &   $\|\bm{J}_{\bm{\theta}}^1\bm{u}-\bm{J}_{\bm{\theta},h}^1\bm{u}_h\|_{0,\Omega}$ & order\\
		\hline
		2& 0.135113 & --& 0.187823 & --& 0.195079 & --\\ 
		4& 0.056780 & 1.25& 0.091150 & 1.04& 0.099255 & 0.97\\ 
		8& 0.025060 & 1.18& 0.041244 & 1.14& 0.049401 & 1.01\\ 
		16& 0.011639 & 1.11& 0.017674 & 1.22& 0.024603 & 1.01\\ 
		\hline
	\end{tabular}
\end{table}

\backmatter
%
%
%
%
%
\bmhead{Acknowledgments}
The work is supported in part by the National Natural Science Foundation of China grant No. 12222101 and the Beijing Natural Science Foundation No. 1232007.

\bibliography{spline}

\end{document}